\pdfoutput=1
\RequirePackage{ifpdf}
\ifpdf % We are running pdfTeX in pdf mode
\documentclass[pdftex]{sigma}
\else
\documentclass{sigma}
\fi

\usepackage[mathscr]{euscript}
\usepackage[all]{xy}

\numberwithin{equation}{section}

\newtheorem{Theorem}{Theorem}[section]
\newtheorem*{Theorem3-18}{Theorem~\ref{th:Gau_is_adj}}
\newtheorem*{Theorem3-20}{Theorem~\ref{th:Gal_is_DAG}}
\newtheorem*{Theorem3-23}{Theorem~\ref{th:main}}
\newtheorem{Corollary}[Theorem]{Corollary}
\newtheorem{Lemma}[Theorem]{Lemma}
\newtheorem{Proposition}[Theorem]{Proposition}
 { \theoremstyle{definition}
\newtheorem{Definition}[Theorem]{Definition}
\newtheorem{Example}[Theorem]{Example}
\newtheorem{Remark}[Theorem]{Remark} }

\begin{document}

\allowdisplaybreaks

\newcommand{\arXivNumber}{1810.08566}

\renewcommand{\thefootnote}{}

\renewcommand{\PaperNumber}{055}

\FirstPageHeading

\ShortArticleName{Differential Galois Theory and Isomonodromic Deformations}

\ArticleName{Differential Galois Theory\\ and Isomonodromic Deformations\footnote{This paper is a~contribution to the Special Issue on Algebraic Methods in Dynamical Systems. The full collection is available at \href{https://www.emis.de/journals/SIGMA/AMDS2018.html}{https://www.emis.de/journals/SIGMA/AMDS2018.html}}}

\Author{David BL\'AZQUEZ SANZ~$^\dag$, Guy CASALE~$^\ddag$ and Juan Sebasti\'an D\'IAZ ARBOLEDA~$^\dag$}

\AuthorNameForHeading{D.~Bl\'azquez Sanz, G.~Casale and J.S.~D\'iaz Arboleda}

\Address{$^\dag$~Universidad Nacional de Colombia, Sede Medell\'in, Facultad de Ciencias,\\
\hphantom{$^\dag$}~Escuela de Matem\'aticas, Calle 59A No. 63~-~20, Medell\'in, Antioquia, Colombia}
\EmailD{\href{mailto:dblazquezs@unal.edu.co}{dblazquezs@unal.edu.co}, \href{mailto:jsdiaz@unal.edu.co}{jsdiaz@unal.edu.co}}

\Address{$^\ddag$~IRMAR, Universit\'e de Rennes 1, Campus de Beaulieu, b\^at. 22-23,\\
\hphantom{$^{\ddag}$}~263 avenue du G\'en\'eral Leclerc, CS 74205, 35042 RENNES Cedex, France}
\EmailD{\href{mailto:guy.casale@univ-rennes1.fr}{guy.casale@univ-rennes1.fr}}

\ArticleDates{Received November 14, 2018, in final form July 29, 2019; Published online August 05, 2019}

\Abstract{We present a geometric setting for the differential Galois theory of $G$-invariant connections with parameters. As an application of some classical results on differential algebraic groups and Lie algebra bundles, we see that the Galois group of a connection with parameters with simple structural group $G$ is determined by its isomonodromic deformations. This allows us to compute the Galois groups with parameters of the general Fuchsian special linear system and of Gauss hypergeometric equation.}

\Keywords{differential Galois theory; isomonodromic deformations; hypergeometric equation}

\Classification{53C05; 14L30; 12H05}

\renewcommand{\thefootnote}{\arabic{footnote}}
\setcounter{footnote}{0}

%\setcounter{tocdepth}{2}
%\tableofcontents

\section{Introduction}

Differential Galois theory deals with some algebraic properties of differential equations. Its first version, dealing with systems of linear homogeneous ordinary differential equations, was proposed by E.~Picard and E.~Vessiot at the end of the 19th century following the ideas of E.~Galois on polynomial equations. The Galois group encodes the algebraic dependence relations between the coefficients of a fundamental matrix of solutions. There is a long history of generalizations, extensions, and different frameworks for such a theory.

In \cite{landesman2008}, P.~Landesman proposes a generalized differential Galois theory for differential field extensions that extends the theory of E.~Kolchin \cite{kolchin1973} and can be applied to differential equations depending on parameters. In~\cite{cassidy2005galois}, P.~Cassidy and M.F.~Singer clarify the application of such generalized differential Galois theory to systems of linear differential equations,
\begin{gather}\label{eq:linear_intro}
\frac{\partial U}{\partial x} = A(x,s)U.
\end{gather}
Their proposal is known as parameterized Picard--Vessiot theory. It is a theory about differential field extensions. A difficulty for its application is that the field of constants must be differentially closed. This is not usual for the fields of functions that appear in the realm of algebraic geometry or complex analysis. It has been shown that, under some assumptions, this condition can be weakened by algebraic means as it is shown in \cite{gillet2013,leon2016,wibmer2012}.

The new aspect of the parameterized Picard--Vessiot theory, compared to the classical one, is that the Galois group does not only encode algebraic dependence relations between the coefficients of $U$, but differential algebraic dependence relations as functions of the parameters. Computation of the parameterized differential Galois groups is a delicate matter, many articles have been devoted to the question among them \cite{arreche2016,dreyfus2014, hardouin2017, minchenko2014, minchenko2015}.

An evidence of the geometric importance of the parameterized theory is its connection with isomonodromic families which is announced in \cite{cassidy2005galois} and further explored in \cite{gorchinskiy2014}. Namely, a~differential equation with parameters is isomonodromic if and only if its differential Galois group is conjugated to an algebraic group with constant entries. Here, based on the ideas of B.~Malgrange~\cite{malgrange2010pseudogroupes} and P.~Cartier~\cite{cartier2008groupoides} we present here another framework for the parameterized Picard--Vessiot theory that can be applied to the more usual situation of parametric families of $G$-invariant connections on algebraic varieties.
Our proposal is not different from that of Cassidy, Landesman and Singer, but its geometrical incarnation when the differential field in question is the field of rational functions on an algebraic variety. It is a geometric theory, based in the prolongation of connections to the jet bundles and reduction of the structural group. The Galois groups are going to be differential algebraic groups. Following~\cite{le2010algebraic}, we see these differential algebraic groups as systems of differential equations on the sections of an algebraic group bundle, compatible with the group operation. The main objective of this work is to unveil the relationship between the parameterized Picard--Vessiot theory and isomonodromic deformations of linear differential equations. This must be done in the context of complex geometry.

Note that \cite{arreche2016, dreyfus2014} give an algorithm to compute the Galois group of a second order differential equation. However we bypass this complex algorithm and use a geometrical argument that simplifies the exposition. It should be interesting to compare Proposition~13 from~\cite{arreche2016} to Lemma~\ref{lem}.

\subsection{Summary}

\subsubsection{Differential Galois theory of a principal connection}

In Section~\ref{sec:2} we give an exposition of the Galois theory for linear differential equations in a~geo\-metrical setting. Here, we interpret algebraic differential equations as singular foliations in varieties. We consider a principal $G$-bundle $\pi\colon P\to B$, a singular foliation $\mathscr F$ in $B$, and a $G$-invariant flat partial $\mathscr F$-connection $\mathscr D$ in $P$. This setting is based in groupoids instead of groups; it covers the case in which the field of rational first integrals (constants) $\mathbb C(B)^{\mathscr F}$ is bigger than~$\mathbb C$ and therefore not algebraically closed. In the particular case of equation~\eqref{eq:linear_intro} we have:
\begin{itemize}\itemsep=0pt
\item [(a)] the basis is $B= \mathbb C^2$ with coordinates $x$ and $s$; the base field $\mathbb C(B)$ is $\mathbb C(x,s)$;
\item [(b)] the foliation $\mathscr F$ is defined by the equation ${\rm d}s = 0$; the field of constants is $\mathbb C(B)^{\mathscr F} = \mathbb C(s)$;
\item [(c)] the principal ${\rm GL}_n(\mathbb C)$-bundle is $P = {\rm GL}_n(\mathbb C)\times B\to B$;
\item [(d)] the ${\rm GL}_n(\mathbb C)$-invariant $\mathscr F$-connection $\mathscr D$ is defined by the system:
\begin{gather*}{\rm d}s = 0, \qquad {\rm d}U = A(x,s)U\,{\rm d}x.\end{gather*}
\end{itemize}
From an algebraic point of view, we attach to the connection $\mathscr D$ the field $\mathbb C(P)^{\mathscr D}$ of its rational first integrals; rational functions in $P$ constant along leaves of $\mathscr D$. On the other hand, from a geometric point of view, we have the groupoid ${\rm Iso}(P)\to B^2$ of $G$-equivariant isomorphisms between fibers of $P\to B$. The diagonal part of ${\rm Iso}(P)$ is the group bundle ${\rm Gau}(P)\to B$ of $G$-equivariant automorphisms of the fibers of $P\to B$. This group bundle can also be constructed as $P\times_{\rm Adj}G$, the bundle associated to the adjoint action of $G$. In the particular case of equation~\eqref{eq:linear_intro} we have ${\rm Iso}(P) = {\rm GL}_n(\mathbb C)\times B^2$ and ${\rm Gau}(P) = {\rm GL}_n(\mathbb C)\times B$.

The connection $\mathscr D$ induces a singular foliation in ${\rm Iso}(P)$ that we denote as $\mathscr D\boxtimes \mathscr D$ (Remark~\ref{rm:DxD}). The restriction of such foliation to ${\rm Gau}(P)$ is a \emph{group connection} (see Section~\ref{ss:GC}) that can also be seen as the associated connection ${\rm Adj}(\mathscr D)$ induced by $\mathscr D$ in ${\rm Gau}(P)\simeq P\times_{\rm Adj}G$. In the particular case of equation~\eqref{eq:linear_intro} the foliation $\mathscr D\boxtimes \mathscr D$ is that corresponding to the system of equations
\begin{gather*}\frac{\partial R}{\partial x_1} = A(x_1,s_1)R, \qquad \frac{\partial R}{\partial x_0}= - RA(x_0,s_0);\end{gather*}
an its restriction to the diagonal part, ${\rm Gau}(P)$ is the Lax equation
\begin{gather}\label{eq:introLax}
\frac{\partial R}{\partial x} = [A(x,s),R].
\end{gather}

In order to relate the algebraic and geometric approaches, we introduce the concept of \emph{rational groupoid} (see Section~\ref{SectionRGGI}) of gauge transformations: these are Zariski-closed subsets of ${\rm Iso}(P)$ that are groupoids when restricted to a suitable open subset in $B$. Any subfield of $\mathbb C(P)$ is stabilized by a rational groupoid of gauge transformations, and conversely any rational groupoid of gauge stabilized a field of rational invariants. We state (Proposition~\ref{Galois_correspondence_2}) that this relation between groupoids and their invariants is in fact a bijective Galois correspondence
\begin{gather*}
\begin{Bmatrix}
\mbox{rational subgroupoids}
\\
\mbox{ of } \mathrm {Iso}(P)
\end{Bmatrix}
\leftrightarrow
\begin{Bmatrix}
G\mbox{-invariant subfields of } \mathbb C(P)
\\ \
\mbox{containing } \mathbb C
\end{Bmatrix}.
\end{gather*}
The \emph{Galois groupoid} ${\rm GGal}(\mathscr D)\subset {\rm Iso}(P)$ is defined as the rational groupoid stabilizing $\mathbb C(P)^{\mathscr D}$. The (intrinsic) Galois group ${\rm Gal}(\mathscr D)\subset {\rm Gau}(\mathscr D)$ is seen as bundle in groups over the base $B$ and it is obtained as the diagonal part of the groupoid (Definition~\ref{df:GGal_classical}). In the case without rational first integrals, $\mathbb C(B)^{\mathscr F} = \mathbb C$, the classical differential Galois group can be recovered as the fiber of~${\rm Gal}(\mathscr D)$.

The Galois groupoid ${\rm GGal}(\mathscr D)$ is also the smallest rational groupoid of gauge transformations which is tangent to the foliation $\mathscr D\boxtimes \mathscr D$ (Remark~\ref{rm:topGalois}). In the particular case of equation~\eqref{eq:linear_intro} it is the Zariski-closure of the solution of the following Cauchy problem
\begin{gather}\label{eq:cauchy_intro}
\frac{\partial R}{\partial x_1} = A(x_1,s_1)R, \qquad \frac{\partial R}{\partial x_0} = - RA(x_0,s_0), \qquad R(x_0,s_0,x_1,s_1) = {\rm Id}, \qquad s_0 = s_1.
\end{gather}

\subsubsection{The parameterized Galois groupoid of a connection}

Section~\ref{sec:3} is devoted to linear differential Galois theory with parameters. Readers that are not familiar with the language of differential algebraic groups as in \cite{le2010algebraic} should read Appendix~\ref{ap:DAG} before this section. Here we fix a submersion $\rho\colon B\to S$ and we take $\mathscr F = \ker({\rm d}\rho)$, the foliation tangent to the fibers of~$\rho$. A $G$-invariant flat $\mathscr F$-connection is an algebraic family of $G$-invariant flat connections parameterized by $S$. In the particular case of equation~\eqref{eq:linear_intro} the submersion is $\mathbb C^2\to \mathbb C$, $(x,s)\mapsto s$.

We consider $J(P/B)\to B$ the bundle of $k$-jets of sections of $P\to B$ which is a pro-algebraic bundle. Inside $J(P/B)$ there is a pro-algebraic subvariety $\Gamma(\mathscr D)$ consisting of formal $\mathscr D$-horizontal sections of $P\to B$. In the particular case of equation~\eqref{eq:linear_intro} the equations of $\Gamma(\mathscr D)\subseteq J(P/B)$ are obtained by successive derivation of the equation~\eqref{eq:linear_intro}:
\begin{gather*}
U_x = A(x,s)U, \\
U_{xx} = A_x(x,s)U + A(x,s)U_x, \\
U_{xs} = A_{s}(x,s)U + A(x,s)U_{ss},\\
U_{xxx} = A_{xx}(x,s)U + 2A_x(x,s)U_x + A(x,s)U_{xx}, \\
U_{xxs} = A_{xs}(x,s)U + A_x(x,s)U_s + A_s(x,s)U_x + A(x,s)U_{xs},\\
\hspace*{9.4mm} \vdots
\end{gather*}
Note that this equation allow to write down all derivatives of $U$ with respect to $x$ in terms of derivatives of $U$ with respect to~$s$. In this case we have that $\Gamma(\mathscr D)$ is coordinated by $x,s,U,U_s,U_{ss},U_{sss},\ldots$ and thus its field of rational functions is $\mathbb C(\Gamma(\mathscr D)) = \mathbb C(x,s,U,U_s,U_{ss},\allowbreak U_{sss},\ldots)$.
An element $j_{(x_0,s_0)}\Phi(x,s) \in \Gamma(\mathscr D)$ is the formal solution of~\eqref{eq:linear_intro} with initial conditions around~$x_0$,~$s_0$:
\begin{gather*}\Phi(x_0,s) = U_0 + (s-s_0)U_1 + \frac{(s-s_0)^2}{2}U_2 + \frac{(s-s_0)^3}{6}U_3 + \cdots.\end{gather*}
Quantities $U_0,U_1,U_2,{\ldots}$ are the values of the coordinate functions $U,U_s,U_{ss},{\ldots}$ at $j_{(x_0,s_0)}\!\Phi(x,s)$.

The bundle $\Gamma(\mathscr D)\to B$ is endowed (after choice of a moving frame in $S$, Proposition~\ref{pr:principalGD}) of a~structure of principal $\hat\tau_{m,}G$-bundle, where $m$ is the dimension of $S$ and $\hat\tau_{m}G$ is the pro-algebraic group of formal maps from $(\mathbb C^m, 0)$ to~$G$. In the particular of equation~\eqref{eq:linear_intro} the elements of~$\hat\tau_1{\rm GL}_n(\mathbb C)$ are power series
\begin{gather*}j_0\sigma(s) = \sigma_0 + s\sigma_1 + \frac{s^2}{2}\sigma_2+ \frac{s^3}{6}\sigma_3 + \cdots\end{gather*}
of $n\times n$ matrices with non-degenerate initial term $\sigma_0$. The action of $\hat\tau_1{\rm GL}_n(\mathbb C)$ in $\Gamma(\mathscr D)$ is given by left multiplication and Taylor expansion
\begin{gather*}(j_0\sigma(s)) (j_{(x_0,s_0)}\Phi(x,s)) = j_{(x_0,s_0)} (\sigma(s-s_0)\Phi(x,s)).\end{gather*}

We define, by means of the total derivative operators, the prolongation $\mathscr D^{(\infty)}$ of $\mathscr D$ to $\Gamma(\mathscr D)$ (Definition \ref{df:prolD}, for each finite order $k$). It is a $\hat\tau_mG$-invariant $\mathscr F$-connection in $\Gamma(\mathscr D)\to B$. In the particular case of equation \eqref{eq:linear_intro} the prolongation corresponds to higher differentiation of the original equation with respect to the parameter:
\begin{gather*}
{\rm d}s =0, \\
{\rm d} U = A(x,s)U\,{\rm d}x, \\
{\rm d}U_s = (A_s(x,s)U + A(x,s)U_s)\,{\rm d}x, \\
{\rm d}U_{ss} = (A_{ss}(x,s)U + 2A_s(x,s)U_s + A(x,s)U_{ss})\,{\rm d}x, \\
\hspace*{9.6mm} \vdots
\end{gather*}
We define the field of \emph{rational differential invariants} of the connection $\mathscr D$ as the field $\mathbb C(\Gamma(\mathscr D))^{\mathscr D^{(\infty )}}$ of rational first integrals of the connection $\mathscr D^{(\infty)}$ (see Section~\ref{ss:PGal}). After truncation to arbitrary finite order we apply the classical differential Galois theory of Section~\ref{sec:2} and obtain a Galois groupoid ${\rm PGal}_{\infty}(\mathscr D) \subset {\rm Iso}(\Gamma(\mathscr D))$ (Definition~\ref{def:GaloisgroupP}). This is the pro-algebraic rational groupoid of gauge transformations of the bundle of jets of $\mathscr D$-horizontal sections that stabilize all rational differential invariants of~$\mathscr D$.

Its diagonal part ${\rm Gal}_{\infty}(\mathscr D)$ is a rational pro-algebraic group sub-bundle of ${\rm Gau}(\Gamma (\mathscr D))$ which is itself a pro-algebraic group sub-bundle of ${\rm Gau}(J(P/B))$. From Lemma~\ref{lm:JG} we know a canonical isomorphism ${\rm Gau}(J(P/B)) \simeq J({\rm Gau}(P)/B)$. That is, automorphisms of the fibers of $J(P/B)$ can be seen as jets of gauge transformations of~$P$. Note that $J({\rm Gau}(P)/B)$ is a jet bundle over~$B$, and therefore it is endowed of a differential structure: total derivative operators, and so on. We realized that the group bundle of gauge automorphisms of~$\Gamma(\mathscr D)$ is defined by some differential equations in ${\rm Gau}(P)$. Thus, it is a~\emph{differential algebraic subgroup}. Moreover, so it is the Galois group.

\begin{Theorem3-18}
The group bundle of gauge automorphisms $\Gamma(\mathscr D)$ is identified with the diffe\-ren\-tial algebraic subgroup of gauge symmetries of $\mathscr D$, defined by the group connection ${\rm Adj}(\mathscr D)$,
\begin{gather*}{\mathrm{Gau}}(\Gamma (\mathscr D))\simeq \Gamma({\rm Adj}(\mathscr D)))\subset J({\mathrm{Gau}}(P)/B).\end{gather*}
\end{Theorem3-18}

\begin{Theorem3-20}
The parameterized Galois group bundle ${\mathrm{Gal}}_{\infty}(\mathcal D)$ is a differential algebraic subgroup of $J({\mathrm{Gau}}(P)/B)$.
\end{Theorem3-20}

Here $\Gamma({\rm Adj}(\mathscr D))\subset J({\mathrm{Gau}}(P)/B)$ is the differential algebraic group of gauge symmetries of the connection $\mathscr D$, which in the particular case of equation~\eqref{eq:linear_intro} consists of formal solutions of the Lax equation~\eqref{eq:introLax}.
The parameterized Galois group is a differential algebraic subgroup of it.

\subsubsection{Isomonodromic deformations}

In our geometric framework we interpret isomonodromic deformations as extensions of vector field distributions. In Section~\ref{ss:isom} we define:

An \emph{isomonodromic deformation} of the connection $\mathscr D$ with parameters in $S$ is a partial $G$-invariant connection $\tilde{\mathscr D}$ extending~$\mathscr D$.
If we take $\tilde{\mathscr F} = \pi_*\big(\tilde{\mathscr D}\big)$, then $\tilde{\mathscr D}$ is a partial $\tilde{\mathscr F}$-connection. Paths along the leaves of $\rho_*(\tilde{\mathscr F})$ are called \emph{isomonodromic paths}.

Under the hypothesis of simple structural group, as a direct consequence of Kiso and Cassidy results \cite{cassidy1989classification, kiso1979local}, isomonodromic deformations provide the equations for the Galois group with parameters.

\begin{Theorem3-23}
Let $P\to B$ be an affine principal bundle with simple group $G$, $\rho\colon B\to S$ a~dominant map, $\mathscr F = \ker({\rm d}\rho)$, and $\mathscr D$ a principal $G$-invariant connection with parameters in~$S$. Let us assume that the Galois group of $\mathscr D$ is ${\mathrm{Gau}}(P)$. Then, there is a biggest isomonodromic deformation $\tilde{\mathscr D}$ of $\mathscr D$ and the Galois group with parameters ${\mathrm{Gal}}_{\infty}(\mathscr D)$ is the group $\Gamma\big({\mathrm{ Adj}}\big(\tilde{\mathscr D}\big)\big)$ of gauge symmetries of~$\tilde{\mathcal D}$.
\end{Theorem3-23}

In Example~\ref{ex:fuchsian} we consider the bundle $P\to B$ where $B = \mathbb C^{k+1} \times \mathfrak{sl}_n(\mathbb C)^k$ with coordinates $x,a_1,\ldots,a_k, A_1,\ldots,A_k$; $S= \mathbb C^k\times \mathfrak{sl}_n(\mathbb C)^k$ and $P = {\mathrm{ SL}}_n(\mathbb C) \times B$. We also consider the projection $\rho\colon B\to S$. In this setting we study the general Fuchsian system with $k$ singularities and trace free matrices, with $ \mathrm{SL}_n(\mathbb C)$-connection given by
\begin{gather*}%\label{eq:Fuchsian_intro}
\frac{\partial U}{\partial x} = \left(\sum_{i=1}^n \frac{A_i}{x-a_i}\right)U.
\end{gather*}
Its isomonodromic deformations are well known and given by the Schlessinger system. Theorem~\ref{th:main} allows us to describe its differential Galois group as the group of gauge symmetries of the Schlessinger system. It is defined by the differential equations
\begin{gather*}
\frac{\partial \sigma}{\partial x} = \left[\sum_{i=1}^n \frac{A_i}{x-a_i},\sigma \right], \qquad
\frac{\partial \sigma}{\partial a_i} = \left[\frac{A_i}{x-a_i}, \sigma \right].
\end{gather*}

Section \ref{s:GHyp} is devoted to Gauss hypergeometric equation
\begin{gather*}\label{HG_intro}
\quad x(1-x)\frac{{\rm d}^2u}{{\rm d}x^2} + \{\gamma -(\alpha+\beta+1) x \} \frac{{\rm d}u}{{\rm d}x} - \alpha\beta u=0,
\end{gather*}
which is seen as a linear connection with parameters, in the bundle $P = {\mathrm{ GL}}_2(\mathbb C) \times B \to B$, where $B = \mathbb C_{x}\times S$ and $S = \mathbb C^{3}_{\alpha,\beta,\gamma}$. Then by a series of arguments, that include reducing the equation to $ \mathrm{PGL}_2 (\mathbb C) \times B$, the absence of isomonodromic deformations for the reduced equation and also the projection given by the determinant, we arrive at the following description of its the Galois group:

The differential Galois group with parameters of Gauss' hypergeometric equation is given by
\begin{gather*}
\label{eq:GrupoH_intro}
{\mathrm{Gal}}_{\infty}(\mathcal H) = \{j\sigma\in {\mathrm{Gau}}(J(\mathcal H)) \,|\, j \det (\sigma)\in {\mathrm{Gal}}_{\infty}(\mathcal H_{\det})\},
\end{gather*}
which is defined by the differential equations
\begin{gather*} \partial_x\sigma = \left[\left(\begin{matrix}
0 & 1 \\
\dfrac{\alpha\beta}{x(1-x)} & \dfrac{(\alpha+\beta+1)x - \gamma}{x(1-x)}
\end{matrix} \right)
,\sigma\right],\\
\partial_x\det(\sigma) = \partial_c\det(\sigma) = \partial_a\left(\frac{\partial_a\det(\sigma)}{\det(\sigma)}\right)=
\partial_b\left(\frac{\partial_a\det(\sigma)}{\det(\sigma)}\right)=
\partial_b\left(\frac{\partial_b\det(\sigma)}{\det(\sigma)}\right)= 0,
\end{gather*}
where $\sigma$ is an invertible $2\times 2$ matrix depending on $x$, $\alpha$, $\beta$, $\gamma$ and the differential operators $\partial_a$, $\partial_b$, $\partial_c$ are those given in Proposition~\ref{pr:abc}.

A consequence of the computation of ${\rm Gal}_{\infty}(\mathcal H)$ is the following: let us consider the Gauss hypergeometric series
\begin{gather*}{}_{2}{\rm F}_1(\alpha,\beta,\gamma;x) = \sum_{n=0}^\infty \frac{(\alpha)_n(\beta)_n}{(\gamma)_n}\frac{x^n}{n!}.\end{gather*}
It is well known that
\begin{gather*}{}_2{\bf F}_1 = \left(
\begin{matrix}
{}_2{\rm F}_1(\alpha,\beta,\gamma;x) & x^{1-\gamma}{}_2{\rm F}_1(1+\alpha-\gamma,1+\beta-\gamma,2-\gamma;x) \\
\dfrac{{\rm d}}{{\rm d}x}{}_2{\rm F}_1(\alpha,\beta,\gamma;x) & \dfrac{{\rm d}}{{\rm d}x}({}_2{\rm F}_1(1+\alpha-\gamma,1+\beta-\gamma,2-\gamma;x))
\end{matrix}
\right)\end{gather*}
is a fundamental matrix of solutions of \eqref{HGmatrix} and thus $\mathcal H$-horizontal section. The only differential equation satisfied by ${}_{2}{\bf F}_1$ with respect to the parameters $\alpha$, $\beta$, $\gamma$ are those defining its Galois group, that are differential equations in $\det({}_2{\bf F}_1)$. Therefore we may state that the \emph{hypergeometric series ${}_2{\rm F}_1(\alpha,\beta,\gamma; x)$ does not satisfy any algebraic partial differential equation with respect to the parameters~$\alpha$,~$\beta$,~$\gamma$}.

Appendix~\ref{ap:DAG} is an exposition of the theory of differential algebraic groups, following~\cite{le2010algebraic}, from a geometric point of view. Finally, in Appendix~\ref{appendix-B} we explore the relation between the algebraic theory of~\cite{cassidy2005galois} and ours.

\section{Differential Galois groupoid of a principal connection}\label{sec:2}

\subsection{On rational equivalence relations}

Let $P$ be a complex irreducible algebraic affine smooth variety. We say that a Zariski-closed subset $R\subset P\times P$ is \emph{a rational equivalence relation} if:
\begin{enumerate}\itemsep=0pt
\item[1)] there is an open subset $P'$ such that $R|_{P'}:= R \cap (P'\times P')$ is a equivalence relation in $P'$;
\end{enumerate}
 and it satisfies any of the following equivalent conditions:
\begin{enumerate}\itemsep=0pt
\item[2)] for any open subset $U$ of $P$, $R$ is the Zariski closure of $R|_U$;
\item[3)] irreducible components of $R$ dominate $P$ by projections.
\end{enumerate}

\begin{Remark}Meromorphic equivalence relations, and generic quotients by them, have been already studied in~\cite{grauert1986meromorphic}. Generic quotients in a broader sense have been studied in SGA3~\cite{demazure1970schemas}. Galois correspondences shown here (Propositions~\ref{Galois_correspondence_1} and~\ref{Galois_correspondence_2}) can be seen as a direct consequence of results shown in SGA3 \cite[Th\'eor\`eme~8.1]{demazure1970schemas}.
\end{Remark}

\begin{Remark}Let us recall that the diagonal $\Delta_P\subset P\times P$ is included in any rational equivalence relation, and it dominates $P$ by projections. Therefore, if $R\subset P\times P$ is an irreducible Zariski-closed subset then it is a rational equivalence relation if and only if it satisfies condition~(1).
\end{Remark}

The following result is a particular case of \cite[Th\'eor\`eme~1.1]{malgrange2001}.

\begin{Theorem}Let $R$ be a rational equivalence relation in $P$. There is an open subset $P'\subset P$ such that $R|_{P'}$ is a smooth equivalence relation in $P'$.
\end{Theorem}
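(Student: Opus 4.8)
The plan is to reduce the statement to the existence of a \emph{generic quotient} for $R$ together with generic smoothness in characteristic zero. First I would record an elementary stability property: if $P_0\subset P$ is any open subset on which $R|_{P_0}$ is an equivalence relation, then for every open $P'\subseteq P_0$ the restriction $R|_{P'}$ is again an equivalence relation. Reflexivity and symmetry are inherited at once, and transitivity holds because $(a,b),(b,c)\in R\cap(P'\times P')\subseteq R\cap(P_0\times P_0)$ forces $(a,c)\in R$ with $a,c\in P'$, hence $(a,c)\in R|_{P'}$. Thus by condition~(1) there is an open $P_0$ on which $R$ is an equivalence relation, and we may shrink $P_0$ to any smaller dense open set without losing the equivalence-relation property; the whole problem is therefore to arrange \emph{smoothness} of the projections after such a shrinking.

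Next I would produce the quotient. Let $K\subset\mathbb C(P)$ be the subfield of $R$-invariant rational functions, i.e., those $f$ with $f\circ\mathrm{pr}_1=f\circ\mathrm{pr}_2$ as rational functions on $R$. Being a subfield of the finitely generated field $\mathbb C(P)$, $K$ is itself finitely generated over $\mathbb C$; choosing a model $Q$ with $\mathbb C(Q)=K$ yields a dominant rational map $q\colon P\dashrightarrow Q$, defined on a dense open set. By the theory of generic quotients for (meromorphic) equivalence relations --- precisely the input invoked in the Remark above as a consequence of \cite[Th\'eor\`eme~8.1]{demazure1970schemas} (see also \cite{grauert1986meromorphic}) --- one may shrink to a dense open $P_1\subset P_0$ so that $q$ restricts to a morphism $P_1\to Q$ and the fibers of $q$ are exactly the $R$-equivalence classes, i.e., $R|_{P_1}=P_1\times_Q P_1$. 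The inclusion $R|_{P_1}\subseteq P_1\times_Q P_1$ merely says that invariants are constant on classes, whereas the reverse inclusion --- that the invariants separate the generic class --- is the nontrivial input and the main obstacle of the argument. It rests on $R$ being a genuine equivalence relation, so that the dimension count $\dim R=2\dim P-\dim Q$ (the general $\mathrm{pr}_1$-fiber being a class of dimension $\dim P-\dim Q$) identifies the generic fiber of $q$ with a single class rather than a finite union of them.

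Finally I would apply generic smoothness. Working in characteristic zero, Sard's theorem for the dominant morphism $q\colon P_1\to Q$ provides a dense open $Q^\circ\subseteq Q$ over which $q$ is a smooth morphism; set $P'=q^{-1}(Q^\circ)$, a dense open subset of $P_1$. Since $P'\subseteq P_1$ one checks directly that $R|_{P'}=\{(a,b)\in P'\times P'\,|\,q(a)=q(b)\}=P'\times_{Q^\circ}P'$. Each of the two projections $R|_{P'}\to P'$ is then obtained from $q\colon P'\to Q^\circ$ by base change, hence is smooth because $q$ is; in particular $R|_{P'}$ is smooth, being smooth over the smooth variety $P'$. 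By the stability property of the first paragraph $R|_{P'}$ is still an equivalence relation, so $P'$ is the required open subset on which $R$ restricts to a smooth equivalence relation.
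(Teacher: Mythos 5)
Your outer strategy is necessarily different from the paper's, because the paper gives no argument at all: it derives the statement as a particular case of \cite[Th\'eor\`eme~1.1]{malgrange2001}. Two of your three steps are correct and cleanly executed: the stability of the equivalence-relation property under shrinking the open set is elementary and right, and the final step (generic smoothness of $q$ over a dense open $Q^\circ$, then smoothness of $R|_{P'}=P'\times_{Q^\circ}P'$ and of both projections by base change from $q\colon P'\to Q^\circ$) is a nice, correct way to produce exactly the smoothness the theorem asserts, something the paper never spells out.

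The gap is in the middle step, at the very point you flag as ``the nontrivial input'': the claim that, after shrinking, the fibres of the invariant map $q\colon P_1\to Q$ are single $R$-classes. The dimension count you offer in support is circular. The equality $\dim Q=\dim P-\dim[p]_R$, equivalently $\dim R=2\dim P-\dim Q$, says precisely that $\mathbb C(P)^R$ has transcendence degree equal to the codimension of the generic class, i.e., that there are ``enough'' rational invariants; that is the statement to be proved, and it is not a formal consequence of $R$ being an equivalence relation. A priori a singular foliation all of whose leaves are algebraic could still have too few rational first integrals, in which case the fibres of $q$ would be strictly larger than the classes; ruling this out is exactly G\'omez-Mont's theorem \cite{gomez1989integrals}, which the paper invokes in Lemma~\ref{Gomez-Mont_as_ROE} for the irreducible component of $R$ containing the diagonal. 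Moreover, even granting the dimension equality, a dimension count can never distinguish a single class from a finite union of classes of the same dimension inside one fibre of $q$; eliminating that ambiguity is the content of the paper's analysis of finite rational equivalence relations (Lemma~\ref{Lema_REQ_finite} and the lemmas preceding it). Your citations do not close the hole either: the quotient theorems of SGA3 \cite{demazure1970schemas} and of Grauert \cite{grauert1986meromorphic} carry hypotheses (flatness/finiteness, respectively semi-properness) that you never verify for an arbitrary rational equivalence relation, and the paper's Remark invokes them only as an alternative source for the Galois correspondence, not as a proved input available at this stage. The repair is to replace the dimension count by the paper's own two-step quotient --- G\'omez-Mont applied to the component $R^0\supseteq\Delta_P$, then the finite theory applied to the induced finite relation on that quotient --- or simply to quote Malgrange as the paper does; with that input granted, your first and third paragraphs do complete the proof.
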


\begin{Example}Let $P$ be $\mathbb C^2$ with coordinates $x$, $y$; we consider coordinates $x_1$, $y_1$, $x_2$, $y_2$ in~$P\times P$. Then, the closed subset~$R$, defined by the equation \begin{gather*}x_1y_2 - x_2y_1 = 0,\end{gather*} is rational equivalence relation in~$P$.
\end{Example}

Rational equivalence relations are reflexive and symmetric. They are not in general transitive (as relations), but generically transitive, i.e., they become transitive after restriction to a dense open subset. For $p\in P$ let us denote $[p]_R$ its \emph{class}, the set of elements that are related by~$R$ to~$p$, i.e., $\pi_1\big(\pi_2^{-1}(\{p\})\big)$. It is a Zariski closed subset of~$P$. For generic $p\in P$ we have $\dim ([p]_R) = \dim (R) - \dim (P)$. Thus, we say that a rational equivalence relation is \emph{finite} if its dimension equals the dimension of~$P$; it implies that generic equivalence classes are finite.

\subsubsection{Irreducible rational equivalence relations}

Let us proceed to the description of all irreducible rational equivalence relations in $P$. Let us consider $\mathscr F$ a singular foliation in $P$,
and let $P'\subset P$ be the complement of ${\rm sing}(\mathscr F)$. We say that~$\mathscr F$ is a \emph{foliation with algebraic leaves} if for all $p\in P'$ the leaf of $\mathscr F$ that passes through $p$ is open in its Zariski closure. A result of G\'omez-Mont \cite[Theorem~3]{gomez1989integrals} ensures that there is an algebraic variety $V$ and a rational dominant map $f\colon P\dasharrow V$ such that the closure of a generic $f$-fibre is the closure of a leaf of~$\mathscr F$.

The foliation $\mathscr F$ induces an rational equivalence relation in $P$,
\begin{gather*}R_{\mathscr F} =\overline{\{(p,q)\in {\rm dom}(f)^2 \,|\, f(p) = f(q)\}}.\end{gather*}
We say that the rational equivalence relation $R_{\mathscr F}$ is the relation ``to be on the same leaf of $\mathscr F$''. For generic $p\in P$ the class $[p]_{R_{\mathscr F}}$ is the Zariski closure of the leaf passing through $p$.

\begin{Lemma}\label{Gomez-Mont_as_ROE}Let $R$ be a rational irreducible equivalence relation in $P$. Then, there is a singular foliation $\mathscr F$ with algebraic leaves in $P$ such that, $R$ is the relation $R_{\mathscr F}$ ``to be on the same leaf of~$\mathscr F$''.
\end{Lemma}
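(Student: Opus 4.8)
The plan is to build the foliation directly from the equivalence classes of $R$, using the smoothing theorem above, and then to match $R$ with $R_{\mathscr F}$ by a dimension count.

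First I would apply the preceding Theorem (the particular case of \cite{malgrange2001}) to fix a dense open $P'\subset P$ on which $R|_{P'}$ is a \emph{smooth} equivalence relation. Over $P'$ the projections $\pi_1,\pi_2\colon R|_{P'}\to P'$ are submersions, so the classes $[p]_R$, $p\in P'$, are smooth submanifolds of constant dimension $d:=\dim R-\dim P$ that partition $P'$; such a partition, coming from a smooth equivalence relation, is locally a fibration, hence a foliation, and the assignment $p\mapsto \mathscr F_p:=T_p[p]_R$ is an involutive distribution whose leaves are the connected components of the classes.

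Second I would check that $\mathscr F$ is algebraic, hence a singular foliation on $P$ with algebraic leaves. Writing $[p]_R=\pi_1\big(\pi_2^{-1}(p)\big)$, its tangent space is recovered from the tangent spaces of the algebraic variety $R$ as $\mathscr F_p=d\pi_1\big(T_{(p,p)}R\cap\ker d\pi_2\big)$, which varies rationally with $p$; thus $p\mapsto\mathscr F_p$ is a rational section of the Grassmann bundle of $d$-planes in $TP$, that is, a singular foliation $\mathscr F$ on $P$ with ${\rm sing}(\mathscr F)\subseteq P\setminus P'$ together with the degeneracy locus of this plane field. Each leaf of $\mathscr F$ is a connected component of a class $[p]_R$, hence is contained in the Zariski-closed set $[p]_R$ and, being smooth of dimension $d=\dim[p]_R$, is open in its Zariski closure; therefore $\mathscr F$ has algebraic leaves and the G\'omez-Mont map $f\colon P\dasharrow V$ and the relation $R_{\mathscr F}$ recalled above are defined for it.

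Finally I would prove $R_{\mathscr F}=R$. For $(p,q)$ in a dense open subset of $\{f(p)=f(q)\}$ the closure of the $f$-fibre through $p$ equals the leaf closure $\overline{L_p}\subseteq[p]_R$, so $q\in[p]_R$ and $(p,q)\in R$; taking closures gives $R_{\mathscr F}\subseteq R$. A dimension count then forces equality: $\dim R=\dim P+d$ since the generic $\pi_2$-fibre is a class of dimension $d$, while $f$ has generic fibre of dimension $d$, so $\dim V=\dim P-d$ and $\dim R_{\mathscr F}=\dim V+2d=\dim P+d=\dim R$. Since $R_{\mathscr F}$ is a Zariski-closed subset of the irreducible variety $R$ of the same dimension, it coincides with $R$. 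The step requiring most care, and the main obstacle, is the passage from the smooth equivalence relation to an \emph{algebraic} involutive distribution with algebraic leaves, i.e., verifying that $p\mapsto T_p[p]_R$ is cut out by rational data and that its integral leaves are Zariski-dense in the components of the classes; once this is in place the identification $R=R_{\mathscr F}$ is purely dimensional.
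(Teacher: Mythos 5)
Your proposal is correct and follows essentially the same route as the paper: you define $\mathscr F_p = T_p[p]_R$, observe that the generic leaf is (a component of) the class and hence algebraic, invoke G\'omez-Mont to obtain $f\colon P\dasharrow V$ and $R_{\mathscr F}$, and conclude $R=R_{\mathscr F}$ from the inclusion $R_{\mathscr F}\subseteq R$, equality of dimensions, and irreducibility of~$R$. The only difference is that you spell out details the paper leaves implicit (the initial restriction to the smooth locus via Malgrange's theorem, the rationality of the plane field $p\mapsto T_p[p]_R$, and the explicit count $\dim R_{\mathscr F}=\dim V+2d$), which is a faithful elaboration rather than a different argument.
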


\begin{proof}Let us consider $R$ an irreducible rational equivalence relation in $P$. For each $p\in P$ we define $\mathscr F_p = T_{p}([p]_R)$. This $\mathscr F$ is a singular foliation of rank $\dim(R)-\dim(P)$. For generic $p\in P$ the leaf of $\mathscr F$ that passes through $p$ is the regular part of the irreducible component of $[p]_R$ that contains~$p$. Then $\mathscr F$ is a foliation with algebraic leaves. We consider now an algebraic variety~$V$ of dimension $\operatorname{codim}(R)$ and a rational dominant map $f\colon P\dasharrow V$ such that for generic~$p$ in~$P$ the leaf that passes through~$p$ is Zariski dense in $\overline{f^{-1}\{f(p)\}}$. By definition $R_{\mathscr F}$ is a rational equivalence relation of the same dimension of~$R$, and contained in~$R$. From the irreducibility of~$R$ we conclude $R=R_{\mathscr F}$.
\end{proof}

\begin{Remark}In the open subset in which the foliation $\mathscr F$ is regular, leaves of $\mathscr F$ are irreducible. A generic equivalence class is the Zariski closure of a generic leaf of $\mathscr F$ thus it is irreducible.
\end{Remark}

In the previous discussion, the algebraic variety $V$ is defined up to birational equivalence. The field $f^*(\mathbb C(V)) = \mathbb C(P)^{\mathscr F}$ of rational first integrals of $\mathscr F$ allows us to recover the rational equivalence relation
\begin{gather*}R_{\mathscr F} = \overline{\{(p,q)\in P\times P \,|\,\forall\, h\in \mathbb C(P)^{\mathscr F}\,\,
\mbox{ if }p, \ q \mbox{ are in the domain of }h\mbox{ then }h(p) = h(q) \}}.\end{gather*}

Let us recall that a subfield $\mathbb K$ of $\mathbb C(P)$ is \emph{relatively algebraically closed} if all elements of $\mathbb C(P)$ that are algebraic over $\mathbb K$ are in $\mathbb K$. It is also well known that any function which is algebraic over~$\mathbb C(P)^{\mathscr F}$ is also a first integral of $\mathscr F$. Thus, $\mathbb C(P)^{\mathscr F}$ is relatively algebraically closed in~$P$. Given a~rational equivalence relation $R$, we can always consider its field of first integrals,
\begin{gather*}\mathbb C(P)^R = \{f\in \mathbb C(P)\, | \, \forall\, (p,q)\in R \mbox{ if }p, \ q \mbox{ are in the domain of }f\mbox{ then }f(p) = f(q)\}.\end{gather*}
Reciprocally, from any relatively algebraically closed subfield $\mathbb K \subset\mathbb C(P)$ containing $\mathbb C$ we can recover a unique irreducible rational equivalence relation $R$ having $\mathbb C(P)^R = \mathbb K$. All this can be summarized in the following lemma.

\begin{Lemma}The following maps are bijective:
\begin{itemize}\itemsep=0pt
\item[$(a)$] The assignation $\mathscr F \leadsto R_{\mathscr F}$ that sends any singular foliation $\mathscr F$ with algebraic leaves to the irreducible rational equivalence relation ``to be on the same leaf of $\mathscr F$.
\item[$(b)$] The assignation $R \leadsto \mathbb C(P)^R$ that sends any irreducible rational equivalence relation to its field of rational invariants.
\item[$(c)$] The assignation that sends any relatively algebraically closed subfield $\mathbb K\subset \mathbb C(P)$ to the foliation $\mathscr F = \ker({\rm d}\mathbb K)$.
\end{itemize}
\end{Lemma}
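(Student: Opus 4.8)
The plan is to establish three bijections by composing them into a cycle and showing each composition is the identity. First I would make precise the three assignments. The map in $(a)$ sends a singular foliation $\mathscr F$ with algebraic leaves to $R_{\mathscr F}$; the map in $(b)$ sends an irreducible rational equivalence relation $R$ to its field of invariants $\mathbb C(P)^R$; and the map in $(c)$ sends a relatively algebraically closed subfield $\mathbb K \subset \mathbb C(P)$ containing $\mathbb C$ to the foliation $\mathscr F = \ker({\rm d}\mathbb K)$, that is, the foliation whose tangent space at a generic point is the annihilator of the differentials of elements of $\mathbb K$. Lemma~\ref{Gomez-Mont_as_ROE} already shows that $(a)$ is surjective: every irreducible rational equivalence relation arises as some $R_{\mathscr F}$. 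So the task is really to show the three assignments fit together as mutually inverse correspondences among the three classes of objects.

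The core of the argument is to verify that the composite $(c)\circ(b)\circ(a)$ is the identity on foliations and, dually, that going around the cycle the other way recovers the starting field. Concretely, I would proceed in the following order. Starting from a foliation $\mathscr F$ with algebraic leaves, form $R_{\mathscr F}$ via $(a)$; its field of invariants $\mathbb C(P)^{R_{\mathscr F}}$ equals $\mathbb C(P)^{\mathscr F}$, the field of rational first integrals, since a rational function is constant on the Zariski closures of leaves exactly when it is constant on the leaves. The preceding discussion in the excerpt records that $\mathbb C(P)^{\mathscr F}$ is relatively algebraically closed in $\mathbb C(P)$ and contains $\mathbb C$, so $(b)$ lands in the correct target. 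Then applying $(c)$ to $\mathbb C(P)^{\mathscr F}$ returns $\ker\big({\rm d}\,\mathbb C(P)^{\mathscr F}\big)$; at a generic smooth point of $\mathscr F$ the leaf is cut out by the level sets of a local first-integral system drawn from $\mathbb C(P)^{\mathscr F}$, so the annihilator of these differentials is exactly $T_p(\text{leaf}) = \mathscr F_p$. This gives $(c)\circ(b)\circ(a) = \mathrm{id}$. Running the cycle starting from a subfield $\mathbb K$ requires the converse identity $\mathbb C(P)^{\ker({\rm d}\mathbb K)} = \mathbb K$, for which relative algebraic closedness is precisely what is needed.

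The identity $\mathbb C(P)^{\ker({\rm d}\mathbb K)} = \mathbb K$ is the step I expect to be the main obstacle, and it is where the hypothesis that $\mathbb K$ be relatively algebraically closed does the essential work. The inclusion $\mathbb K \subseteq \mathbb C(P)^{\ker({\rm d}\mathbb K)}$ is immediate, since every element of $\mathbb K$ has differential vanishing on $\ker({\rm d}\mathbb K)$ by construction. For the reverse inclusion, a function $f$ that is a first integral of $\ker({\rm d}\mathbb K)$ has ${\rm d}f$ lying in the subspace of the cotangent space spanned generically by the differentials of elements of $\mathbb K$; one argues that the transcendence degree of $\mathbb K(f)$ over $\mathbb K$ is zero, so $f$ is algebraic over $\mathbb K$, and then relative algebraic closedness forces $f \in \mathbb K$. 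Making the rank-and-transcendence-degree comparison rigorous over the function field—relating $\dim \ker({\rm d}\mathbb K)$ at a generic point to $\operatorname{trdeg}_{\mathbb C}\mathbb K$ and controlling the locus where differentials degenerate—is the technical heart of the verification.

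Finally I would assemble these into the stated bijectivity. The identity $(c)\circ(b)\circ(a) = \mathrm{id}$ shows $(a)$ is injective, and combined with its surjectivity from Lemma~\ref{Gomez-Mont_as_ROE} it is a bijection from foliations with algebraic leaves to irreducible rational equivalence relations. The identity $\mathbb C(P)^{\ker({\rm d}\mathbb K)} = \mathbb K$ together with the computation $\mathbb C(P)^{R_{\mathscr F}} = \mathbb C(P)^{\mathscr F}$ shows that $(b)$ and the relation $(a)^{-1}\circ(c)^{-1}$ are mutually inverse between irreducible rational equivalence relations and relatively algebraically closed subfields containing $\mathbb C$, so $(b)$ is a bijection. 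Chasing the cycle once more then yields that $(c)$ is a bijection as well, completing the proof.
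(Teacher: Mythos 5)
Your proposal is correct and takes essentially the same route as the paper, whose ``proof'' of this lemma is precisely the preceding discussion it summarizes: surjectivity of assignment $(a)$ via Lemma~\ref{Gomez-Mont_as_ROE}, the recovery of $R_{\mathscr F}$ from the field of first integrals $\mathbb C(P)^{\mathscr F}$, the relative algebraic closedness of that field, and the reciprocal recovery of a unique irreducible rational equivalence relation from any relatively algebraically closed subfield. Your cycle-composition organization and the explicit identity $\mathbb C(P)^{\ker({\rm d}\mathbb K)} = \mathbb K$ merely make rigorous what the paper leaves implicit, so the two arguments coincide in substance.
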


\subsubsection{Finite rational equivalence relations} In this case $\dim R = \dim P$ and the projection $\pi_1\colon R\to P$ is a dominant map. Thus, there is an open subset $P'$ such that $\pi_1\colon R|_{P'} \to P'$ is a covering map. Note that, in order to construct this open set $P'$ we may remove not only the ramification but its orbit by the equivalence relation. Let us distinguish between two cases. First, let us assume $\pi_1|_{P'}$ is a trivial covering. We have the following result.

\begin{Lemma}Let us assume that there is an open subset $P'\subset P$ such that $\pi_1\colon R|_{P'}\to P'$ is a~trivial covering of order $d$. Then, there is free action of a~finite group of order $d$, $G\subset {\rm Aut}(P')$ such that
\begin{gather*}R = \overline{\{(p,g(p))\,|\, p\in P',\, g\in G\}}.\end{gather*}
\end{Lemma}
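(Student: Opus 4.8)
The plan is to read the group $G$ directly off the sheets of the trivial covering and then to verify the group axioms from the equivalence relation structure of $R|_{P'}$, finishing with condition~(2) of the definition of a rational equivalence relation.

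First I would decompose the covering. Since $\pi_1\colon R|_{P'}\to P'$ is a trivial covering of order $d$, the variety $R|_{P'}$ splits as a disjoint union of $d$ sheets $R_1,\dots,R_d$, each mapped isomorphically onto $P'$ by $\pi_1$. Composing the inverse isomorphism $(\pi_1|_{R_i})^{-1}\colon P'\to R_i\subset P'\times P'$ with the second projection $\pi_2$ produces regular maps $\phi_i\colon P'\to P'$ whose graphs are exactly the $R_i$; thus $R|_{P'}=\{(p,\phi_i(p)) : p\in P',\ 1\le i\le d\}$. Because the covering has order $d$ at every point of $P'$, for each $p$ the values $\phi_1(p),\dots,\phi_d(p)$ are pairwise distinct, so in particular the $\phi_i$ are distinct as maps and there are exactly $d$ of them. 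Reflexivity of $R|_{P'}$ forces one sheet to be the diagonal, so after relabelling $\phi_1=\mathrm{id}$.

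Next I would prove that $G:=\{\phi_1,\dots,\phi_d\}$ is a group under composition. Transitivity together with symmetry shows that for fixed $i,j$ and each $p$ the point $\phi_j(\phi_i(p))$ again lies in the class $[p]_R$, hence equals $\phi_{k(p)}(p)$ for some index $k(p)$. The crucial point is that $k(p)$ is constant: the loci $U_k=\{p : \phi_j(\phi_i(p))=\phi_k(p)\}$ are closed (agreement loci of morphisms into the separated variety $P'$), pairwise disjoint (the $\phi_k(p)$ are distinct), and cover $P'$; since $P'$ is open in the irreducible $P$ it is connected, so exactly one $U_k$ is all of $P'$. Therefore $\phi_j\circ\phi_i=\phi_k$ on $P'$ and $G$ is closed under composition. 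Symmetry likewise gives, for each $i$, an index $j$ with $\phi_j\circ\phi_i=\mathrm{id}$; left-composition by $\phi_i$ is then an injective self-map of the finite set $G$, hence bijective, yielding a two-sided inverse inside $G$. Each $\phi_i$ is thus a regular bijection of $P'$ with regular inverse, i.e.\ an element of $\mathrm{Aut}(P')$, and $G$ is a finite subgroup of order $d$.

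Finally, freeness and the closure formula. Since $\phi_i(p)\ne\phi_1(p)=p$ for every $p\in P'$ and every $i\ne 1$, no nontrivial element of $G$ has a fixed point, so the action is free. The identity $R=\overline{\{(p,g(p)) : p\in P',\ g\in G\}}$ is then immediate from the definition of a rational equivalence relation: condition~(2) applied to the open set $P'$ gives $R=\overline{R|_{P'}}$, and $R|_{P'}$ has already been identified with the graph of the $G$-action. I expect the one genuinely delicate step to be the constancy of the composition index $k(p)$; everything rests on the equivalence axioms interacting with the irreducibility of $P$, and it is precisely there that the covering being globally of order $d$ (so that the $\phi_i(p)$ remain distinct over all of $P'$) is essential, since that distinctness simultaneously yields the disjointness needed for the connectedness argument and the freeness of the action.
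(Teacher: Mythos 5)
Your proof is correct and follows essentially the same route as the paper: decompose $R|_{P'}$ into its $d$ sheets, define the maps $\phi_i=\pi_2\circ(\pi_1|_{R_i})^{-1}$, and deduce the group structure from the equivalence-relation axioms, finishing with condition~(2) of the definition. The only difference is one of detail: the paper simply asserts that reflexivity and transitivity make $\{\phi_i\}$ a group of automorphisms, whereas you supply the justification it leaves implicit (the connectedness argument showing the composition index $k(p)$ is constant, and the explicit verification of inverses and freeness), which is a genuine strengthening of the exposition rather than a new method.
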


\begin{proof}Let $R|_{P'} = R'_0\cup R'_1 \cup\cdots \cup R'_{d-1}$ be the decomposition of $R$ in irreducible components. For each $i=0,\ldots,{d-1}$ we define $g_i = \pi_2\circ\pi_1|_{R'_i}^{-1}$. By the reflexive and transitive properties of~$R$ we have that $\{g_0,\ldots,g_{d-1}\}$ is a group of regular automorphims of~$P'$. By definition of the~$g_i$, a pair $(p,q)$ is in $R_i|_{P'}$ if and only if $q = g_i(p)$. Thus,
\begin{gather*}R|_{P'} = \{(p,g_i(p) \,|\, p\in P',\,i=0,1,\ldots,{d-1}\}.\tag*{\qed}\end{gather*}\renewcommand{\qed}{}
\end{proof}

The quotient by free actions of finite groups is well known (see, for instance~\cite{demazure1970schemas}) and then there is a rational quotient map $f\colon P\dasharrow P'/G$ of degree $d$. Note that the finite group $G$ acts on~$\mathbb C(P)$, and that this action fixes $f^{*}(\mathbb C(P/G))$ which is identified with $\mathbb C(P)^G$. We have the following

\begin{Lemma}The assignation $R \leadsto \mathbb C(P)^R$ establishes a bijective correspondence between finite rational equivalence equations that are trivial coverings of open subsets of $P$, and subfields $\mathbb K$ of~$\mathbb C(P)$ such that $\mathbb K \subset \mathbb C(P)$ is an algebraic Galois extension.
\end{Lemma}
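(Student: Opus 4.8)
The plan is to reduce the statement to the classical Galois theory of fields, using the dictionary between birational self-maps of $P$ and $\mathbb{C}$-automorphisms of the function field $\mathbb{C}(P)$. The algebraic engine is Artin's theorem: if a finite group $G$ acts faithfully on a field $L$, then $L/L^G$ is a Galois extension with group $G$ and $[L:L^G]=|G|$, together with the ensuing Galois correspondence between subgroups and intermediate fields.

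First I would check that the map $R\leadsto \mathbb{C}(P)^R$ is well defined, i.e.\ that its image is a Galois extension. Given a finite rational equivalence relation $R$ that is a trivial covering of order $d$, the preceding lemma furnishes a finite group $G\subset{\rm Aut}(P')$ of order $d$ acting freely on a dense open $P'$, with $R=\overline{\{(p,g(p)) : p\in P',\, g\in G\}}$. Since $P'$ is dense in the irreducible variety $P$ we have $\mathbb{C}(P')=\mathbb{C}(P)$, and $G$ acts on this field by pullback; the action is faithful because a nonidentity automorphism of $P'$ cannot fix every rational function, so the map ${\rm Aut}(P')\to{\rm Aut}(\mathbb{C}(P'))$ is injective. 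As recorded just before the statement, $\mathbb{C}(P)^R=\mathbb{C}(P)^G$, whence Artin's theorem gives that $\mathbb{C}(P)/\mathbb{C}(P)^R$ is Galois of degree $d$.

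For surjectivity I would start from an algebraic Galois extension $\mathbb{K}\subset\mathbb{C}(P)$ and note that it is automatically finite: $\mathbb{C}(P)$ is finitely generated over $\mathbb{C}\subset\mathbb{K}$ and algebraic over $\mathbb{K}$, so $[\mathbb{C}(P):\mathbb{K}]=d<\infty$. Put $G={\rm Gal}(\mathbb{C}(P)/\mathbb{K})$, a group of order $d$. Each $g\in G$ fixes $\mathbb{C}$, hence corresponds to a birational self-map of $P$, and is invertible since $g^{-1}\in G$; as $G$ is finite there is a dense open $P'$ on which all these maps restrict to regular automorphisms. Removing the fixed loci of the nonidentity elements and saturating under $G$, I may shrink $P'$ so that the action is free, which is exactly what makes the graphs $\{(p,g(p))\}$ disjoint. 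Then $R=\overline{\{(p,g(p)) : p\in P',\, g\in G\}}$ is a finite rational equivalence relation that is a trivial covering of order $d$, and Artin's theorem yields $\mathbb{C}(P)^R=\mathbb{C}(P)^G=\mathbb{K}$.

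Injectivity is then immediate from the Galois correspondence: if $R$ is such a relation with associated group $G$, then $\mathbb{K}=\mathbb{C}(P)^R=\mathbb{C}(P)^G$ recovers $G={\rm Gal}(\mathbb{C}(P)/\mathbb{K})$, and $G$ in turn determines $R=\overline{\{(p,g(p)) : p\in P',\, g\in G\}}$; hence two such relations with the same invariant field coincide. The main obstacle is not the field theory, which is standard, but the geometric bookkeeping in the surjectivity step: realizing the abstract automorphisms of $\mathbb{C}(P)$ as honest regular automorphisms on a single common open set $P'$, and then shrinking $P'$ to make the action free, so that the resulting covering is genuinely trivial rather than merely finite.
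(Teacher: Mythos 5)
Your proposal is correct and takes essentially the same route as the paper: the paper's own proof likewise reduces everything to the surjectivity step, realizing $G=\mathrm{Gal}(\mathbb{C}(P)/\mathbb{K})$ as birational automorphisms of $P$ that become regular on a common $G$-invariant open subset $P'$, with well-definedness and injectivity delegated to the preceding lemma (trivial coverings $\leftrightarrow$ free finite group actions) and classical Artin--Galois theory. You merely spell out details the paper leaves implicit --- finiteness of the extension, faithfulness of the action, and shrinking $P'$ by removing fixed loci and saturating so the action is free and the covering genuinely trivial --- which is a sound amplification rather than a different method.
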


\begin{proof}We need only to check that starting with a Galois subfield $\mathbb K \subset \mathbb C(P)$ we recover a~trivial covering. Let $G$ be the group of automorphisms of $\mathbb C(P)$ fixing $\mathbb K$. They are birational automorphisms of~$P$. We may find an open subset $P'$ invariant by those automorphisms. It is clear that this allows us to recover a trivial rational equivalence relation in~$P$.
\end{proof}

\looseness=1 Finally let us consider the second case in which $\pi_1\colon R|_{P'}\to P'$ is a covering, but not trivial. In such case there is finite covering $\rho\colon P''\to P'$ that trivializes $\pi_1$ (and $\pi_2)$. We define $R''\subset P''\times P''$ the pullback of the equivalence relation $R|_{P'}$ to $P''$. Now, $R''$ is a trivial cover of $P''$ by $\pi_1$ or $\pi_2$, and thus there is a group of automorphisms of $P''$ inducing $R''$. We have proven the following

\begin{Lemma}Let $R$ be a finite rational equivalence relation in $P$. There is an open subset $P'$, a finite covering $\rho\colon P''\to P'$ and
a finite group $G$ of regular automorphisms of $P''$ such that
\begin{gather*} R = \overline{\{ (\rho(p),\rho(g(p)))\, |\, p\in P'',\,g\in G\}}.\end{gather*}
\end{Lemma}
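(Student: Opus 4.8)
The plan is to reduce to the trivial-covering case already established, by passing to a finite cover of $P$ that splits the covering $\pi_1\colon R|_{P'}\to P'$. First I would fix, exactly as in the preceding discussion, an open subset $P'\subset P$ on which $\pi_1\colon R|_{P'}\to P'$ is a finite covering of some degree $d$, shrinking $P'$ so that it is saturated for $R$ (removing ramification together with its $R$-orbit) and so that $R|_{P'}$ is an honest equivalence relation whose classes are the fibres of $\pi_1$. Set $\mathbb K:=\mathbb C(P)^R$; since generic $R$-classes have $d$ elements, $[\mathbb C(P):\mathbb K]=d$, and after a further shrinking I may choose an affine model $Q$ of $\mathbb K$ together with a finite covering $q\colon P'\to Q$ realizing this field extension, whose fibres are precisely the $R|_{P'}$-classes.

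Next I would produce the cover $\rho\colon P''\to P'$ by taking the Galois closure of $\mathbb C(P)/\mathbb K$. Let $\mathbb L$ be this Galois closure, $G=\mathrm{Gal}(\mathbb L/\mathbb K)$, and let $P''$ be an affine model with $\mathbb C(P'')=\mathbb L$ carrying the dominant rational map $\rho\colon P''\to P'$ dual to $\mathbb C(P)\hookrightarrow\mathbb L$. Restricting $P''$ to a suitable $G$-invariant open subset, $G$ acts by regular automorphisms, $P''\to Q$ becomes a Galois covering with $G$ acting freely, and $\rho$ becomes a finite covering. Two points $\tilde p_1,\tilde p_2\in P''$ then have $R$-equivalent images under $\rho$ if and only if they lie over the same point of $Q$, i.e., if and only if $\tilde p_2=g(\tilde p_1)$ for some $g\in G$; hence the pullback equivalence relation
\begin{gather*}
R''=\{(\tilde p_1,\tilde p_2)\in P''\times P''\mid (\rho(\tilde p_1),\rho(\tilde p_2))\in R\}
\end{gather*}
is exactly the trivial covering $\{(\tilde p,g(\tilde p))\mid \tilde p\in P'',\,g\in G\}$, and in particular a finite rational equivalence relation in $P''$.

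With $R''$ exhibited as a trivial finite covering, the previous lemma applies (with $P''$ and $R''$ in place of $P$ and $R$) and furnishes the free action of the finite group $G$ by regular automorphisms of $P''$ with $R''=\overline{\{(\tilde p,g(\tilde p))\}}$. Pushing forward along $\rho$ and using that $R$ is the Zariski closure of $R|_{P'}$ (condition~2 in the definition of a rational equivalence relation), together with the fact that $\{\rho(g(\tilde p))\mid g\in G\}$ is precisely the $R$-class of $\rho(\tilde p)$, I obtain
\begin{gather*}
R=\overline{\{(\rho(\tilde p),\rho(g(\tilde p)))\mid \tilde p\in P'',\,g\in G\}},
\end{gather*}
which is the desired description.

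The step I expect to be the main obstacle is the algebro-geometric realization of the Galois closure: one must pass from the field-theoretic object $\mathbb L$ to a genuine variety $P''$ on which $G$ acts by regular rather than merely birational automorphisms, and simultaneously arrange that $\rho\colon P''\to P'$ restricts to a finite covering and that the pulled-back relation is honestly trivial over the chosen open set. This forces repeated shrinking to $G$-invariant and $R$-saturated open subsets, precisely the clearing of denominators and ramification permitted by the quotient theory for free finite group actions cited from SGA3. Once this bookkeeping is in place, the identification of $R''$ with the orbit relation of $G$ is immediate and the reduction to the trivial-covering lemma is formal.
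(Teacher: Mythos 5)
Your overall strategy is the same as the paper's: pass to a finite cover of $P'$ on which the pulled-back relation becomes a trivial covering, apply the preceding (trivial-covering) lemma there, and push the orbit relation back down, recovering $R$ by Zariski closure. The difference is how the trivializing cover is produced. The paper does it purely geometrically: it takes a finite covering $\rho\colon P''\to P'$ splitting $\pi_1$ (and $\pi_2$), which one can realize, e.g., as a component of the fibre product of $d$ copies of $R|_{P'}$ over $P'$ parametrizing orderings of the classes; the invariant field never enters. You instead build $P''$ as a model of the Galois closure of $\mathbb C(P)/\mathbb K$, where $\mathbb K=\mathbb C(P)^R$, and this is where your proof has a genuine gap.

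The gap is the assertion ``since generic $R$-classes have $d$ elements, $[\mathbb C(P):\mathbb K]=d$'', together with the claim that the fibres of $q\colon P'\to Q$ are \emph{precisely} the $R|_{P'}$-classes. This is not a counting formality: it says that rational $R$-invariants separate generic classes, which is essentially the Galois correspondence for finite rational equivalence relations, i.e., the lemma that in the paper comes \emph{after} this one and is deduced \emph{from} it. A priori the invariant field of an equivalence relation can be smaller than expected, in which case the $q$-fibres are unions of several classes; and your argument genuinely needs the equality, since the $G$-orbits in $P''$ map onto whole $q$-fibres, so without separation the set $\overline{\{(\rho(p),\rho(g(p)))\}}$ would strictly contain $R$ (the inclusion ``$R$-equivalent images $\Rightarrow$ equal image in $Q$'' is the easy, automatic direction; it is the converse that you use and do not prove). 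So as written the argument is either circular within the paper's development or incomplete on its own terms. It can be repaired without circularity: for $f\in\mathbb C(P)$, the elementary symmetric functions of the values of $\pi_2^*f$ along the fibres of $\pi_1\colon R|_{P'}\to P'$ are rational functions on $P'$ and are $R$-invariant by generic transitivity; since $p\in[p]_R$, every $f$ is then a root of a monic degree-$d$ polynomial over $\mathbb K$, giving $[\mathbb C(P):\mathbb K]\le d$, and applying this to a generic linear combination of affine coordinates of $P$ (keeping the coefficients as indeterminates) shows that the invariants determine the class $[p]_R$, hence that the $q$-fibres are exactly the classes and the degree is exactly $d$. Alternatively one may invoke the quotient theorems of Grauert or SGA3 cited in the paper's opening remark. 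With that supplement the rest of your proof is correct, and indeed your insistence on Galois-ness over the quotient $Q$, rather than mere trivialization of $\pi_1$ and $\pi_2$, is the cleaner way to see that the pulled-back relation $R''$ is the full orbit relation.
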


In this case, we have that the generic quotient $P/R$ coincides with the quotient $P''/G$ which is, up to birational equivalence, determined by its field of rational functions. We also have

\begin{Lemma}\label{Lema_REQ_finite}The assignation $R \leadsto \mathbb C(P)^R$ establishes a bijective correspondence between finite rational equivalence relations in $P$, and subfields $\mathbb K$ of $\mathbb C(P)$ such that $\mathbb K \subset \mathbb C(P)$ is an algebraic extension.
\end{Lemma}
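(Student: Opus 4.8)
The plan is to read the statement as the combination of two facts: the assignment $R\leadsto\mathbb C(P)^R$ sends finite rational equivalence relations to \emph{algebraic} subfields, and it is a bijection onto them. For the first part, and for injectivity, I would exploit the structural description just obtained: any finite rational equivalence relation is $R=\overline{\{(\rho(p),\rho(g(p)))\mid p\in P'',\, g\in G\}}$ for a finite covering $\rho\colon P''\to P'$ and a finite group $G$, and its generic quotient $P/R$ coincides with $P''/G$, so that $\mathbb C(P)^R=\mathbb C(P''/G)=\mathbb C(P'')^G$. Since $\mathbb C(P)$ lies between $\mathbb C(P'')^G$ and $\mathbb C(P'')$, both finite extensions, the extension $\mathbb C(P)^R\subset\mathbb C(P)$ is finite, hence algebraic; this shows the map is well defined into algebraic subfields.

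For injectivity I would recover $R$ from $\mathbb K=\mathbb C(P)^R$ directly. Writing $f\colon P\dashrightarrow P/R$ for the quotient map, whose fibres are generically the classes of $R$, a pair $(p,q)$ lies in $R$ generically if and only if $f(p)=f(q)$, equivalently if and only if $h(p)=h(q)$ for every $h\in f^{*}\mathbb C(P/R)=\mathbb K$. Hence $R$ is the Zariski closure of the set of pairs on which all functions of $\mathbb K$ agree, a datum depending only on $\mathbb K$; two relations with the same field of invariants therefore coincide.

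For surjectivity I would start from an arbitrary $\mathbb K\subset\mathbb C(P)$ with $\mathbb C(P)/\mathbb K$ algebraic --- equivalently finite, as $\mathbb C(P)$ is finitely generated over $\mathbb C$ --- and reduce the non-normal case to the Galois one already treated. Let $\mathbb L$ be a Galois closure of $\mathbb C(P)/\mathbb K$, set $G=\mathrm{Gal}(\mathbb L/\mathbb K)$ and $H=\mathrm{Gal}(\mathbb L/\mathbb C(P))$, so that $\mathbb C(P)=\mathbb L^{H}$ and $\mathbb K=\mathbb L^{G}$. Choosing a model $P''$ of $\mathbb L$ on which $G$ acts by regular automorphisms and dominating $P$ through the quotient $\rho\colon P''\to P$ by $H$, I would define $R=\overline{\{(\rho(p),\rho(g(p)))\mid p\in P'',\, g\in G\}}$, which by the preceding lemma is a finite rational equivalence relation. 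The key verification is that its invariants are exactly $\mathbb K$: a function $h\in\mathbb C(P)=\mathbb L^{H}$ is $R$-invariant precisely when $\rho^{*}h$ is $G$-invariant in $\mathbb L$, i.e.\ when $\rho^{*}h\in\mathbb L^{G}=\mathbb K$.

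I expect the main obstacle to be exactly this last point in the non-Galois situation: one cannot realize a general algebraic subfield by a free finite group action on $P$ itself, so the argument must pass to the Galois closure $P''$ and transport the $G$-action back down via $\rho$, taking care that the generic quotient and the identification $\mathbb C(P)^R=\mathbb L^{G}$ survive the restriction to the open subset where everything is regular. Checking that the $R$ so defined is genuinely reflexive, symmetric and generically transitive --- which follows because $\rho$ is the quotient by the subgroup $H\le G$ --- is the routine but essential bookkeeping behind the construction.
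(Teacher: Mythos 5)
Your proposal is correct and follows essentially the same route as the paper: the paper states this lemma without a separate proof, as a direct consequence of the two preceding lemmas (the trivial-covering/Galois-subfield correspondence and the structure theorem $R=\overline{\{(\rho(p),\rho(g(p)))\,|\,p\in P'',\,g\in G\}}$ together with the identification of the generic quotient $P/R$ with $P''/G$). Your three steps --- well-definedness of the map into algebraic subfields via that structure theorem, injectivity by recovering $R$ generically as the locus where all invariants agree, and surjectivity by passing to the Galois closure of $\mathbb C(P)/\mathbb K$ and transporting the finite group action down through $\rho$ --- are precisely the details the paper leaves implicit.
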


\subsubsection{General rational equivalence relations} Finally, given a non-irreducible rational equivalence relation $R$, we may proceed to construct the quotient in two steps. First, we consider $R^0$ the irreducible component containing the diagonal~$\Delta$. In this case, the quotient is given by Lemma~\ref{Gomez-Mont_as_ROE}. Then, we project the rational equivalence relation to the quotient, obtaining a finite rational equivalence relation. Then we apply Lemma~\ref{Lema_REQ_finite}. Finally we obtain that rational equivalence relations are determined by its fields of invariants, and vice-versa. Summarizing

\begin{Proposition}[Galois correspondence for rational equivalence relations]\label{Galois_correspondence_1} The map
\begin{align*}
\{\mbox{rational equivalence relations in }P\} &\to \{\mbox{subfields of }\mathbb C(P)\mbox{ containing }\mathbb C\},\\ R &\to \mathbb C(P)^R
\end{align*}
is a bijective correspondence, and anti-isomorphism of lattices.
\end{Proposition}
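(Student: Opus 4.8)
The plan is to assemble the general correspondence from the two special cases already treated: the bijection between \emph{irreducible} rational equivalence relations and relatively algebraically closed subfields (established via $R_{\mathscr F}$ and the field of first integrals, Lemma~\ref{Gomez-Mont_as_ROE}), and the bijection of Lemma~\ref{Lema_REQ_finite} between \emph{finite} rational equivalence relations and algebraic subextensions of $\mathbb C(P)$. The bridge is the two-step factorisation sketched in the paragraph preceding the statement: every rational equivalence relation $R$ is controlled by its identity component $R^0$ (the irreducible component through $\Delta$) together with a residual \emph{finite} relation on the generic quotient by $R^0$. I would then prove that the forward map $R\mapsto\mathbb C(P)^R$ and the resulting reconstruction are mutually inverse and order-reversing.

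First I would make the forward map precise. Given $R$, let $R^0$ be the irreducible component containing $\Delta$; it is an irreducible rational equivalence relation, so it equals $R_{\mathscr F}$ for a foliation with algebraic leaves, with relatively algebraically closed field of invariants $\mathbb K_0:=\mathbb C(P)^{R^0}$ and generic quotient $f_0\colon P\dashrightarrow V_0$, $\mathbb C(V_0)=\mathbb K_0$. The structural point is that, since $R$ is generically an equivalence relation, its classes are equidimensional for generic $p$, so every irreducible component of $R$ has the dimension of $R^0$; consequently a generic class $[p]_R$ is a finite union of $R^0$-classes, and the image $\bar R:=\overline{(f_0\times f_0)(R)}\subset V_0\times V_0$ is a \emph{finite} rational equivalence relation. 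One then checks $\mathbb C(P)^R=f_0^*\big(\mathbb C(V_0)^{\bar R}\big)$: a rational function is $R$-invariant exactly when it descends through $f_0$ (equivalently lies in $\mathbb K_0$) and is $\bar R$-invariant downstairs.

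Conversely, given a subfield $\mathbb C\subseteq\mathbb K\subseteq\mathbb C(P)$, let $\tilde{\mathbb K}$ be its relative algebraic closure in $\mathbb C(P)$. Then $\tilde{\mathbb K}$ is relatively algebraically closed, hence corresponds to a unique irreducible relation $R^0$ with $\mathbb C(P)^{R^0}=\tilde{\mathbb K}$, while $\mathbb K\subseteq\tilde{\mathbb K}$ is algebraic and, under $\tilde{\mathbb K}=\mathbb C(V_0)$, corresponds by Lemma~\ref{Lema_REQ_finite} to a finite relation $\bar R$ on $V_0$. Setting $R:=\overline{(f_0\times f_0)^{-1}(\bar R)}$ recovers a rational equivalence relation with $\mathbb C(P)^R=\mathbb K$. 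That the two constructions are mutually inverse is then immediate from the two cited bijections, since the canonical decomposition of a subfield into its relative algebraic closure above $\mathbb K$ matches the decomposition of $R$ into the pair $(R^0,\bar R)$. For the lattice claim I would verify that the bijection is order-reversing with order-reversing inverse: $R_1\subseteq R_2$ forces $\mathbb C(P)^{R_2}\subseteq\mathbb C(P)^{R_1}$ directly from the definition of invariants, and the reverse implication follows by applying the two constructions. An order-reversing bijection whose inverse is order-reversing is automatically an anti-isomorphism of lattices, exchanging meets and joins.

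The main obstacle I anticipate is the descent step in the forward direction: proving rigorously that $\bar R$ is a genuine \emph{finite} rational equivalence relation and that $R$ is reconstructed exactly as the pullback of $\bar R$. This rests on the equidimensionality of the generic $R$-classes, i.e.\ that the components of $R$ beyond $R^0$ only glue together finitely many entire leaves rather than enlarging them; this homogeneity is forced by $R$ being an equivalence relation on a dense open set, so that $\pi_2^{-1}(p)$ has constant dimension for generic $p$ and every component meeting $P'\times P'$ shares the dimension of $R$. Once this equidimensionality is secured, the remaining verifications reduce to bookkeeping with the already-established correspondences.
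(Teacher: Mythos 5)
Your proposal is correct and follows essentially the same route as the paper: the paper's own argument is precisely the two-step decomposition you describe, namely the irreducible component $R^0$ through the diagonal handled via the foliation/first-integral correspondence of Lemma~\ref{Gomez-Mont_as_ROE}, followed by the projected finite relation on the G\'omez-Mont quotient handled by Lemma~\ref{Lema_REQ_finite}. If anything, you supply more detail than the paper, whose summary paragraph simply asserts (without your equidimensionality discussion) that projecting $R$ to the quotient yields a finite rational equivalence relation.
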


\begin{Remark}
In the correspondence given by Proposition \ref{Galois_correspondence_1}:
\begin{enumerate}\itemsep=0pt
\item[1)] irreducible rational equivalence relations are given by rational foliations with algebraic leaves and correspond to relatively algebraically closed subfields of $\mathbb C(P)$;
\item[2)] finite trivial rational equivalence relations correspond to actions of finite groups on open subsets of~$P$.
\end{enumerate}
\end{Remark}

\subsection{Rational groupoids}
Let us fix $B$ a complex irreducible smooth affine algebraic variety. By an \emph{algebraic groupoid} acting on $B$ we mean a smooth algebraic complex Lie groupoid
$(\alpha,\beta)\colon \mathcal G \to B\times B$.

\begin{Definition} A Zariski-closed subset $\mathcal H\subset \mathcal G$ is called a \emph{rational subgroupoid} if it satisfies:
\begin{enumerate}\itemsep=0pt
\item[1)] there is an open subset $B'$ of $B$ such that $\mathcal H|_{B'}$ is a Lie subgroupoid of $\mathcal G|_{B'}$;
\end{enumerate}
and it satisfies any of the following equivalent conditions:
\begin{enumerate}\itemsep=0pt
\item[2)] for any open subset $B'$ of $B$, $\mathcal H = \overline{\mathcal H|_{B'}}$;
\item[3)] irreducible components of $\mathcal H$ dominate $B$ by the source (or target) map.
\end{enumerate}
\end{Definition}

\begin{Remark}If $\mathcal H$ is irreducible, it automatically satisfies~(3); it contains the identity.
\end{Remark}

It is clear that the notion of rational subgroupoid generalizes that of rational equivalence relations, the latter being the rational subgroupoids of $B\times B$.

Let $\pi_{\mathscr G}\colon \mathscr G\to B$ be an \emph{algebraic group bundle}. The fibers of $\pi_{\mathscr G}$ are complex algebraic groups where the group operation and inversion are defined as morphisms over~$B$. There is a natural dictionary between group bundles over open subsets of $B$ and algebraic groups defined over~$\mathbb C(B)$. Thus, sections of~$\mathscr G$ define an algebraic group $\tilde{\mathscr G}$. Algebraic $\mathbb C(B)$-subgroups of $\tilde{\mathscr G}$ are in bijective correspondence with \emph{rational group subbundles} of $\mathscr G$. That is, Zariski closed subsets $\mathscr H\subset \mathscr G$ such that:
\begin{enumerate}\itemsep=0pt
\item[1)] there is an open subset $B'$ such that $\pi_{\mathscr G}\colon\mathscr H|_{B'}\to B'$ is an algebraic group bundle;
\item[2)] irreducible components of $\mathscr H$ dominate $B$ by $\pi_{\mathscr G}$.
\end{enumerate}

Let us consider the restriction of the algebraic groupoid $\mathcal G$ to the diagonal of~$B$,
\begin{gather*}\mathcal G^{\rm diag} = \{g\in\mathcal G \colon \alpha(g)=\beta(g)\}.\end{gather*}
It is an algebraic group bundle $\alpha\colon \mathcal G^{\rm diag}\to B$. Given an rational subgroupoid $\mathcal H\subset \mathcal G$, the intersection $\mathcal H\cap \mathcal G^{\rm diag}$ is not a rational group bundle over $B$, it may contain some irreducible components that do not dominate the basis.

\begin{Example}Let us consider $B = \mathbb C^2$, and $\mathcal G = \mathbb C^*\times B\times B$ where the composition law is given by $(\mu,x',y',x'',y'')\cdot(\lambda,x,y,x',y') =(\lambda\mu, x,y,x'',y'')$. The equation
\begin{gather*}\lambda = \frac{x'}{y'} \frac{y}{x}\end{gather*}
defines an algebraic subgroupoid outside of the curve $xy = 0$, and thus,
\begin{gather*}\mathcal H = \{(\lambda,x,y,x',y')\colon \lambda xy' - x'y = 0 \}\end{gather*}
is a rational subgroupoid of $\mathcal G$, which is in fact, irreducible. However the intersection with the diagonal group bundle,
\begin{gather*}\mathcal H \cap \mathcal G^{\rm diag} = \{(\lambda,x,y) \colon (\lambda-1)xy = 0\} \end{gather*}
has three irreducible components. Two of them, of equations $x = 0$ and $y=0$ do not dominate~$B$; the other one, of equation $\lambda = 1$ defines a rational group subbundle.
\end{Example}

Hence, in order to define the diagonal part of a rational groupoid we have to restrict our attention to a suitable affine open subset. It is clear that the following
definition does not depend on the choice of~$B'$.

\begin{Definition}\label{def_diagonalpart} Let $\mathcal H$ be a rational subgroupoid of $\mathcal G$. Let us consider an affine subset $B'\subset B$ such that $\mathcal H|_{B'}$ is an algebraic groupoid acting on $B'$. We define the diagonal part of $\mathcal H$,
\begin{gather*}\mathcal H^{\rm diag} = \overline{\mathcal H|_{B'} \cap \mathcal G^{\rm diag}}.\end{gather*}
\end{Definition}

\subsection{Rational groupoids of gauge isomorphisms}\label{SectionRGGI}

Let $G$ be a linear algebraic group, $\pi\colon P\to B$ a principal connected fiber bundle with structure group $G$. From now on, \emph{we assume that $B$ is affine and $P$ is affine over $B$}; many of our results can be stated in a more general setting.

By a \emph{rational reduction} of $\pi\colon P\to B$ we mean a Zariski closed subset $L\subset P$ such that:
\begin{enumerate}\itemsep=0pt
\item[1)] there is an open subset $B'\subset B$ such that $L|_{B'}\to B'$ is a reduction of the bundle $P|_{B'}\to B'$;
\item[2)] irreducible components of $L$ dominate $B$ by projection.
\end{enumerate}
As before, if $L$ is irreducible then condition~(1) implies condition~(2).

By ${\rm Iso}(P)$ we mean the groupoid of gauge isomorphisms of $P$; it is an algebraic groupoid acting on $B$. Elements of ${\rm Iso}(P)$ are $G$-equivariant maps between fibers of $P$. This groupoid can be constructed as an associated bundle ${\rm Iso}(P) = (P\times P)/G$. The class $[p,q]$ of a pair $(p,q)$ is identified with the unique $G$-equivariant map $\phi_{p,q}\colon P_{\pi(p)}\to P_{\pi(q)}$ satisfying $\phi_{p,q}(p) = q$.

By ${\rm Gau}(P)$ we mean the group bundle of gauge automorphisms of $P$; the diagonal part of~${\rm Iso}(P)$. It is an algebraic group bundle over $B$. Elements of ${\rm Gau}(P)$ are $G$-equivariant maps from a fiber of $P$ to itself. This group bundle can also be constructed as the associated bundle, the balanced construction $P[{\rm Adj}] = P\times_{\rm Adj} G $.

Given a rational subgroupoid $\mathcal G \subset {\rm Iso}(P)$, there is a corresponding field of rational invariants,
\begin{gather*}\mathbb C(P)^{\mathcal G} = \{f\, |\, f(p) = f(g(p)) \mbox{ for all } p\in P \mbox{ and }g\in \mathcal G\\
\hphantom{\mathbb C(P)^{\mathcal G} = \{}{} \mbox{whenever }p\in {\rm dom}(g)\mbox{ and }p, \ g(p)\mbox{ are in }{\rm dom}(f)\}.\end{gather*}

We would like to characterize rational subgroupoids of $\rm{Iso}(P)$ in terms of their invariants. We say that a subfield $\mathbb K \subset \mathbb C(P)$ is $G$-invariant if right translations in $P$ map $\mathbb K$ into itself.

\begin{Proposition}[Galois correspondence]\label{Galois_correspondence_2}
The map
\begin{align*}
\{\mbox{rational subgroupoids of \rm Iso}(P)\} & \to \{G\mbox{-invariant subfields of }\mathbb C(P)\mbox{ containing }\mathbb C\},\\
\mathcal G &\to \mathbb C(P)^{\mathcal G}
\end{align*}
is a bijective correspondence, and anti-isomorphism of lattices.
\end{Proposition}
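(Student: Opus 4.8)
The plan is to reduce the statement to the Galois correspondence for rational equivalence relations already established in Proposition~\ref{Galois_correspondence_1}. The bridge is the presentation ${\rm Iso}(P) = (P\times P)/G$, where $G$ acts diagonally on the right, $g\cdot(p,q) = (pg,qg)$. Since $G$ acts freely on $P$ (principal bundle), the diagonal action on $P\times P$ is free as well, and the quotient map $\kappa\colon P\times P \to {\rm Iso}(P)$, $(p,q)\mapsto [p,q]=\phi_{p,q}$, is a principal $G$-bundle. First I would record the resulting dictionary: taking $\kappa$-preimages and $\kappa$-images yields an inclusion-preserving bijection between Zariski-closed subsets of ${\rm Iso}(P)$ and $G$-invariant Zariski-closed subsets of $P\times P$, and this bijection commutes with Zariski closure.

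Next I would match the algebraic structures on the two sides. Given a rational subgroupoid $\mathcal G\subset{\rm Iso}(P)$, set $R_{\mathcal G}=\kappa^{-1}(\mathcal G)\subset P\times P$; a pair $(p,q)$ lies in $R_{\mathcal G}$ exactly when $\phi_{p,q}\in\mathcal G$. Under this translation the groupoid axioms become the axioms of an equivalence relation: the identities of ${\rm Gau}(P)$ give reflexivity, closure under inversion gives symmetry ($\phi_{p,q}^{-1}=\phi_{q,p}$), and closure under composition gives transitivity ($\phi_{q,r}\circ\phi_{p,q}=\phi_{p,r}$). The rationality conditions (generic subgroupoid structure, irreducible components dominating the base by source and target) transport likewise, so $R_{\mathcal G}$ is a $G$-invariant rational equivalence relation. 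Conversely, a $G$-invariant rational equivalence relation $R$ descends to $\mathcal G=\kappa(R)$, and the same dictionary shows $\mathcal G$ is a rational subgroupoid. This produces an inclusion-preserving bijection between rational subgroupoids of ${\rm Iso}(P)$ and $G$-invariant rational equivalence relations in $P$.

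Then I would identify the two notions of invariant field. By definition $f\in\mathbb C(P)^{\mathcal G}$ iff $f(p)=f(\phi_{p,q}(p))$ generically, which is precisely $f(p)=f(q)$ for $(p,q)\in R_{\mathcal G}$; hence $\mathbb C(P)^{\mathcal G}=\mathbb C(P)^{R_{\mathcal G}}$. It remains to see that $G$-invariance matches on both sides of Proposition~\ref{Galois_correspondence_1}. The right $G$-action intertwines the two sides, $\mathbb C(P)^{g\cdot R}=(\mathbb C(P)^{R})\circ R_{g^{-1}}$, so the correspondence $R\mapsto\mathbb C(P)^R$ is $G$-equivariant; being a bijection, it sends $G$-invariant relations to $G$-invariant subfields and back. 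A direct check also confirms that $\mathbb C(P)^{\mathcal G}$ is automatically $G$-invariant, using that elements of $\mathcal G$ are $G$-equivariant and fibre-preserving. Restricting the anti-isomorphism of Proposition~\ref{Galois_correspondence_1} to the $G$-invariant objects on both sides, and composing with the inclusion-preserving bijection above, gives the asserted bijective anti-isomorphism of lattices.

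The main obstacle I anticipate is the first step: making precise that $\kappa$ sets up a clean bijection between $G$-invariant closed subsets upstairs and closed subsets downstairs that is compatible with Zariski closure and with the generic (rational) conditions. One must check that the open sets $B'$ on which the groupoid (resp.\ equivalence-relation) structure holds, and the loci where irreducible components dominate the base, correspond correctly under $\kappa$ and $\kappa^{-1}$; this is exactly where the freeness of the diagonal action and the local triviality of the principal bundle $\kappa$ do the real work.
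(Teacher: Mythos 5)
Your proposal is correct and follows essentially the same route as the paper: the paper's proof also passes through the quotient map $\pi_{P\times P}\colon P\times P\to {\rm Iso}(P)$ (your $\kappa$) to identify rational subgroupoids with $G$-invariant rational equivalence relations, notes $\mathbb C(P)^{\mathcal G}=\mathbb C(P)^{\pi_{P\times P}^{-1}(\mathcal G)}$ and that $G$-invariance matches on both sides, and then invokes Proposition~\ref{Galois_correspondence_1}. Your write-up is simply a more detailed version of the same argument, including the transport of the rationality conditions that the paper leaves implicit.
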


\begin{proof}The quotient map $\pi_{P\times P}\colon P\times P\to {\rm Iso}(P)$, $(p,q)\mapsto \phi_{p,q}$ establishes a bijective correspondence between the set of subgroupoids of ${\rm Iso}(P)$ and the set of $G$-invariant equivalence relations in $P$. This induces a bijective correspondence between the set of rational subgroupoids of ${\rm Iso}(P)$ and $G$-invariant rational equivalence relations in $P$. Moreover, we have $\mathbb C(P)^{\mathcal G}=\mathbb C(P)^{\pi_{P\times P}^{-1}(\mathcal G)}$.

The field of invariants of a $G$-invariant rational equivalence relation is $G$-invariant. Reciprocally, the rational equivalence relation corresponding to a $G$-invariant field, is $G$-invariant. Thus, by Proposition~\ref{Galois_correspondence_1} we finish the proof.
\end{proof}

\begin{Remark}
Let us note that irreducible rational subgroupoids of ${\rm Iso}(P)$ correspond to $G$-invariant relatively algebraically closed subfields of $\mathbb C(P)$ containing $\mathbb C$. Hence, they correspond to $G$-invariant singular foliations in $P$ with algebraic leaves.
\end{Remark}

\subsection{Galois groupoid of a partial connection}\label{section:GGal} Here we give a geometric presentation based on rational subgroupoids of the classical Galois theory for $G$-invariant connections. The ideas of this section can be found in \cite{cartier2008groupoides}, see also \cite{malgrange2010pseudogroupes} for a~nonlinear version. Let us consider $G$ and $\pi\colon P\to B$ as in Section~\ref{SectionRGGI}. Additionally, we also consider a singular foliation $\mathscr F$ in $B$.

\begin{Definition}A rational partial flat Ereshmann connection in the direction of $\mathscr F$ in $P$ is a~$\pi$-projectable, singular foliation $\mathscr D$ in~$P$ of the same rank that $\mathscr F$ such that $\pi_*(\mathscr D) = \mathscr F$.
\end{Definition}

\emph{From now on we will write $\mathscr F$-connection, or just connection, to refer to partial flat Ereshmann connections.}

A local analytic section $u$ of $P$ is $\mathscr D$-horizontal if $\mathscr D_{u(x)} \subset d_xu(T_xB)$ for each $x$ in the domain of $u$. This means that each restriction of $u$ to a leaf $\mathscr L$ of $\mathscr F$ is $\mathscr D|_{\pi^{-1}(\mathscr L)}$-horizontal, and vice-versa. A vector field in $P$ tangent to $\mathscr D$ is also called $\mathscr D$-horizontal. Given a local analytic vector field $X$ tangent to $\mathscr F$ in $B$, there is a unique $\mathscr D$-horizontal vector field $\tilde X$ in $P$ such that $\pi_*\big(\tilde X\big) = X$. This~$\tilde X$ is called the $\mathscr D$-horizontal lift of~$X$. It is clear that if $X$ is a rational vector field then $\tilde X$ is also a rational vector field, moreover, if $X$ is regular in $B'\subset B$ and $\mathscr D$ is regular in $\pi^{-1}(B')$ then $\tilde X$ is regular in $\pi^{-1}(B')$. The $\mathscr D$-horizontal lifting is compatible with the Lie bracket $\widetilde{[X,Y]} = \big[\tilde X, \tilde Y\big]$.

We say that $\mathscr D$ is \emph{$G$-invariant} if for each $g\in G$ we have $R_{g*}(\mathscr D) = \mathscr D$. We say that $\mathscr D$ is \emph{regular} if it is a regular foliation. After replacing $B$ by a suitable affine subset, we may assume that any $G$-invariant rational $\mathscr F$-connection $\mathscr D$ is regular.

Let $\mathscr D$ be a $G$-invariant rational foliation on $P$. We consider the field $\mathbb C(P)^{\mathscr D}$ of rational first integrals of the foliation. This field is, by definition, $G$-invariant. Thus, by Proposition~\ref{Galois_correspondence_2} it corresponds to an irreducible rational subgroupoid of {\rm Iso}(P). The following definition is equivalent to that given in~\cite{Damien2016specialisation}.

\begin{Definition}\label{df:GGal_classical} The \emph{Galois groupoid} of $\mathscr D$ is the rational subgroupoid ${\rm GGal}(\mathscr D)$ of ${\rm Iso}(P)$ such that $\mathbb C(P)^{{\rm GGal}(\mathscr D)} = \mathbb C(P)^{\mathscr D}$. The \emph{Galois group bundle} is the diagonal part of the Galois groupoid, ${\rm Gal}(\mathscr D)= {\rm GGal}(\mathscr D)^{\rm diag}$. It is a rational group subbundle of ${\rm Gau}(P)$.
\end{Definition}

By definition, it is clear that ${\rm GGal}(\mathscr D)$ is irreducible in the Zariski topology. The Galois group bundle may not be irreducible.

\begin{Remark}The classical case of Picard--Vessiot theory corresponds to the case in which the foliation $\mathscr F$ has no rational first integrals, i.e., $\mathbb C(B)^{\mathscr F} = \mathbb C$. This setting has been exposed in~\cite{parallelisms2017}. The fiber of the Galois group bundle is isomorphic to the classical Galois group. The intrinsic Galois group is the algebraic $\mathbb C(B)$-group whose elements are the sections of the Galois group bundle.
\end{Remark}

\begin{Remark}The Galois group bundle admits also a structure of differential algebraic group. The induced associated connection ${\rm Adj}(\mathscr D)$ is a partial group connection (see Appendix~\ref{ap:DAG}) in~${\rm Gau}(P)$. It restricts to the group subbundle $\rm Gal(\mathscr D)$. Thus, the pair $({\rm Gal}(\mathscr D),{\rm Adj}(\mathscr D))$ can be seen as a differential algebraic group of order $1$. This differential algebraic group structure was already pointed out by Pillay in~\cite{pillay2004}.
\end{Remark}

\begin{Remark}\label{rm:DxD}As the group bundle ${\rm Gau}(P)\to B$ admits two equivalent descriptions, so the associated connection does. We may consider the distribution $\mathscr D\times \mathscr D$ in $P\times P$. This distribution is $G$-invariant under the diagonal action, thus it projects onto a distribution $\mathscr D\boxtimes \mathscr D$ in ${\rm Iso}(P)$. The intersection of this distribution with the tangent bundle $T({\rm Gau}(P))$ gives us the associated connection $\rm Adj(\mathscr D)$.
\end{Remark}

\begin{Remark}\label{rm:topGalois}
The Galois groupoid can be also constructed geometrically as the smallest rational subgroupoid of ${\rm Iso}(P)$ which is tangent to the distribution $\mathscr D\boxtimes \mathscr D$, see~\cite{Damien2016specialisation} or~\cite{cartier2008groupoides}. This is stated in the introduction when we say that the Galois groupoid of~\eqref{eq:linear_intro} is the Zariski closure of the solution of the Cauchy problem~\eqref{eq:cauchy_intro}, which is the union of leaves of $\mathscr D \boxtimes \mathscr D$ passing through the identity section. However, the definition of the Galois groupoids through the Galois correspondence (Proposition~\ref{Galois_correspondence_2}) is preferable for our purposes.
\end{Remark}

\begin{Remark}\label{rm:gaugesymmetries} By construction, the action ${\rm Gau}(P)\times_B P \to P$ maps the distribution \linebreak \mbox{${\rm Adj}(\mathscr D)\times_B \mathscr D$} onto $\mathscr D$. Therefore, ${\rm Adj}(\mathscr D)$ is the \emph{equation of gauge symmetries of~$\mathscr D$}. A gauge transformation is ${\rm Adj}(\mathscr D)$-horizontal if and only if it has the property of transforming $\mathscr D$-horizontal sections into $\mathscr D$-horizontal sections.
\end{Remark}

\subsection{Connections with parameters}\label{section:GGalS}
Let us also consider a submersion $\rho\colon B\to S$, so that we have commutative diagrams as follows
\begin{gather*}\xymatrix{P\ar[r]^-{\pi} \ar[rd]_-{\bar\pi} & B \ar[d]^-{\rho} \\ & S,}\qquad
\xymatrix{\mathbb C(P) & \mathbb C(B)
\ar[l]_-{\pi^*} \\ & \mathbb C(S), \ar[u]_-{\rho^*}
\ar[ul]^-{\bar\pi^*}}\end{gather*}
we assume that the fibers of $\bar\pi$ are irreducible. Let $\mathscr F$ be the foliation with algebraic leaves defined by ${\rm ker}(d\rho)$. By a \emph{$G$-invariant connection in $P$ with parameters in $S$} we mean a $G$-invariant $\mathcal F$-connection $\mathscr D$ in~$P$ \cite{le2010algebraic,malgrange2010pseudogroupes}.

For each $s\in S$ we denote $B_s = \rho^{-1}(s)$, $P_s = \bar\pi^{-1}(s)$, and $\pi_s = \pi|_{P_s}$. Thus, $\pi_s\colon P_s\to B_s$ is a~principal fiber bundle with structure group~$G$. The foliation~$\mathscr D$ is tangent to~$P_s$ its restriction~$\mathscr D_s$ is a $G$-invariant connection in $P_s$.

We identify $\mathbb C(S)$ with its image in $\mathbb C(B)$ by $\rho^*$ and $\mathbb C(B)$ with its image in $\mathbb C(P)$ by $\pi^*$ so that we have a sequence of extensions, $\mathbb C(S) \subset \mathbb C(B) \subset \mathbb C(P)$. Note that $\mathbb C(B)$ is the fixed field~$\mathbb C(P)^G$ of rational invariants by the action of $G$ and $\mathbb C(S)$ is the field $\mathbb C(B)^{\mathscr F}$ of rational first integrals of $\mathscr F$.

By ${\rm Iso}_S(P)$ we mean the subgroupoid of ${\rm Iso}(P)$ consisting on fiber isomorphisms that respect the projection $\rho$.

\begin{Remark}\label{Galois_correspondence_3} In terms of Proposition~\ref{Galois_correspondence_2} ${\rm Iso}_S(P)$ corresponds to the subfield $\mathbb C(S) \subset \mathbb C(P)$ and to the equivalence relation $P\times_S P$. Thus, we have a bijective correspondence between irreducible rational subgroupoids of ${\rm Iso}_S(P)$ and $G$-invariant subfields of $\mathbb C(P)$ containing $\mathbb C(S)$.
\end{Remark}

\begin{Proposition}
Let $\mathscr D$ be a $G$-invariant connection with parameters in $S$. We have ${\rm GGal}(\mathscr D)\allowbreak \subset {\rm Iso}_S(P)$.
\end{Proposition}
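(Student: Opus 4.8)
The plan is to deduce the inclusion of groupoids from an inclusion of invariant fields, using that the Galois correspondence of Proposition~\ref{Galois_correspondence_2} is an anti-isomorphism of lattices. Both objects in the statement have a known invariant field: by Definition~\ref{df:GGal_classical} the groupoid ${\rm GGal}(\mathscr D)$ is the rational subgroupoid attached to $\mathbb C(P)^{\mathscr D}$, and by Remark~\ref{Galois_correspondence_3} the subgroupoid ${\rm Iso}_S(P)$ is the one attached to $\mathbb C(S)$. Since larger invariant fields correspond to smaller subgroupoids, the desired inclusion ${\rm GGal}(\mathscr D)\subset {\rm Iso}_S(P)$ is \emph{equivalent} to the reversed field inclusion $\mathbb C(S)\subset \mathbb C(P)^{\mathscr D}$. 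Thus the whole task reduces to showing that every rational first integral of $\rho$, pulled back to $P$, is a first integral of the connection $\mathscr D$.

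To prove $\mathbb C(S)\subset \mathbb C(P)^{\mathscr D}$ I would first observe that $\mathscr D$ is tangent to the fibers of $\bar\pi=\rho\circ\pi$. Indeed, $\mathscr D$ is an $\mathscr F$-connection, so $\pi_*(\mathscr D)=\mathscr F=\ker({\rm d}\rho)$; since $\bar\pi_*=\rho_*\circ\pi_*$ by the chain rule, we get $\bar\pi_*(\mathscr D)=\rho_*(\ker({\rm d}\rho))=0$, hence $\mathscr D\subset \ker({\rm d}\bar\pi)$. Now take $h\in \mathbb C(S)$ and regard it inside $\mathbb C(P)$ through $\bar\pi^*$, as fixed in the text. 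The function $\bar\pi^*(h)$ is constant along the fibers of $\bar\pi$, so its differential annihilates $\ker({\rm d}\bar\pi)$ and in particular $\mathscr D$. Therefore $\bar\pi^*(h)$ is a rational first integral of $\mathscr D$, that is $\bar\pi^*(h)\in \mathbb C(P)^{\mathscr D}$, which yields $\mathbb C(S)\subset \mathbb C(P)^{\mathscr D}$.

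To finish I would check that both fields are legitimate entries of the correspondence of Proposition~\ref{Galois_correspondence_2}, i.e., $G$-invariant subfields containing $\mathbb C$: the field $\mathbb C(S)\subset \mathbb C(B)=\mathbb C(P)^G$ is pointwise fixed by $G$, and $\mathbb C(P)^{\mathscr D}$ is $G$-invariant because $\mathscr D$ is $G$-invariant (as already noted before Definition~\ref{df:GGal_classical}); both visibly contain $\mathbb C$. Applying the anti-isomorphism of lattices to the inclusion $\mathbb C(S)\subset \mathbb C(P)^{\mathscr D}$ then gives ${\rm GGal}(\mathscr D)\subset {\rm Iso}_S(P)$. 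I do not expect a genuine obstacle: the only substantive point is the tangency $\mathscr D\subset \ker({\rm d}\bar\pi)$, which is immediate from the definition of an $\mathscr F$-connection, and everything else is a formal application of the Galois correspondence together with its order-reversing property.
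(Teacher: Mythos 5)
Your proof is correct and follows essentially the same route as the paper: the paper's own (one-line) proof also deduces the inclusion from $\mathbb C(S)\subset \mathbb C(P)^{\mathscr D}$ via the Galois correspondence of Remark~\ref{Galois_correspondence_3}. You simply spell out the details the paper leaves implicit, namely the tangency $\mathscr D\subset\ker({\rm d}\bar\pi)$ giving the field inclusion, and the verification that both fields are admissible entries of the correspondence.
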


\begin{proof}Taking into account $\mathbb C(S) \subset \mathbb C(P)^{\mathscr D}$ we conclude using Remark \ref{Galois_correspondence_3}.
\end{proof}

\section{The parameterized Galois groupoid of a connection}\label{sec:3}

Let us consider, as in Section \ref{section:GGalS} an affine principal bundle $\pi\colon P\to B$ with structure group bundle $G\to S$, a submersion $\rho\colon B\to S$, $\mathscr F = \ker(d\rho)$, and $\mathscr D$ a $G$-invariant connection with parameters in $S$. Some of our arguments will require to restrict these objects to some affine subset of $B$. Accordingly, without loss of generality, \emph{we assume that $\mathscr F$ and $\mathscr D$ are regular.}

\subsection{Prolongation of a principal bundle}

Two germs of analytic sections of $P$ at $x\in B$ have the same jet of order $k$ if they are tangent up to the order $k$. This means, that in any analytic system of coordinates, they have the same power series development up to degree $k$. The space $J_k(P/B)$ of sections is a smooth affine algebraic variety and then $J(P/B) = \lim J_k(P/B)$ is a smooth pro-algebraic affine variety. The jet of order $k$ at $x\in B$ of the section $u$ is denoted by $j_x^k u$, and the section of $J_k(P/B)$ that assigns to each $x$ in the domain of $u$ the jet $j_x^ku$ is denoted by $j^k u$ (we omit the index $k=\infty$ for jets of infinite order).

By $\mathcal O_{J_k(P/B)}$ we understand the ring of regular functions in $J_k(P/B)$ with rational coefficients in $B$, that is,
$\mathcal O_{J_k(P/B)} = \mathbb C(B)\otimes_{\mathbb C[B]} \mathbb C[J_k(P/B)]$. Elements of $\mathcal O_{J_k(P/B)}$ are regular \emph{differential functions of order $k$ with rational coefficients} in $B$, or equivalently, regular differential functions in $J_k(P|_{B'}/B')$ for some open subset $B'\subset B$. We have a chain of ring extensions:
\begin{gather*}\mathbb C(B) \subset \mathcal O_{P/B} \subset \mathcal O_{J_1(P/B)} \subset \cdots \subset \mathcal O_{J_k(P/B)} \subset \cdots \subset \mathcal O_{J(P/B)}.
\end{gather*}

\subsubsection[Differential structure of $\mathcal O_{J(P/B)}$]{Differential structure of $\boldsymbol{\mathcal O_{J(P/B)}}$}
There exists a total derivative prolongation: if $X$ is a rational vector field in $B$ with domain of regularity $B'\subset B$ then its \emph{total derivative operator} $X^{\rm tot}$ is a derivation of the ring of differential functions, defined by the rule,
\begin{gather*}\big(X^{\rm tot} F\big)\big(j_x^{k+1}u\big) = \big(X\big(F\circ j^ku\big)\big)(x),\end{gather*}
for any differential function $F$ of order $k$; $X^{\rm tot}$ is a derivation of $\mathcal O_{J(P/B)}$. Note that the total derivative increases the order of differential functions, thus $X^{\rm tot}$ is not a rational vector field in~$J_{k}(P/B)$ for $k$ finite. We consider the differential ring~$(\mathcal O_{J(P/B)}, \mathfrak X_{\rm rat}(B))$. This is a $\mathfrak X_{\rm rat}(B)$-finitely generated differential ring over~$\mathbb C(B)$.

\begin{Remark}Here we work with a broader definition of differential field than usual (cf.~\cite{kolchin1973}). We consider a differential field to be a field endowed with an arbitrary Lie algebra of derivations. However, it is always possible to choose a finite commuting basis of $\mathfrak X_{\rm rat}(B)$ in order to recover the usual definition. For instance, let us take a transcendence basis of~$\mathbb C(B)$ over~$\mathbb C$. Then, the partial derivatives w.r.t.\ the basis elements are commuting rational vector fields spanning~$\mathfrak X_{\rm rat}(B)$. Their corresponding total derivative operators are a basis of derivations form~$\mathcal O_{J(P/B)}$.
\end{Remark}

\subsubsection[Prolongation of rational vector fields in $P$]{Prolongation of rational vector fields in $\boldsymbol{P}$}
Given a local analytic vector field $Y$ in $P$, its $k$-th prolongation $Y^{(k)}$ to $J_k(P/B)$ is obtained geometrically in the following way. Let us assume that $Y$ has a real flow $\{\sigma_t\}_{t\in \mathbb R}$ in a neighborhood of $p\in P$, let $x = \pi(p)$ and~$u$ be a local section such that $u(x)=p$. Let $\gamma_u$ be the graph of~$u$. For small $t$ we have that $\sigma_t(\Gamma_u)$ is, around~$x$, also the graph of a local section that we name~$u_t$. Then,
\begin{gather*}Y^{(k)}\big(j_x^ku\big) = \left.\frac{{\rm d}}{{\rm d}t}\right|_{t=0} j^k_{\pi(\sigma_t(p))} u_t.\end{gather*}
Thus, the flow of $Y^{(k)}$ is the projection in the jet space of the flow naturally induced by~$Y$ in the space of pointed germs of submanifolds of~$P$. It is easy to check that prolongation of vector fields is compatible with the Lie bracket $[X,Y]^{(k)} = \big[X^{(k)},Y^{(k)}\big]$. The prolongation $Y^{(k)}$ of a~regular vector field $Y$ is also regular. Thus, if $Y$ of a rational vector field in $P$ whose domain of definition contains an open subset of the form $\pi^{-1}(B')$ with $B'$ open in $B$ then $Y^{(k)}$ is a~derivation of~$\mathcal O_{J_k(P/B)}$. This prolongation is possible up to arbitrary order, so that we obtain a~derivation $Y^{(\infty)}$ of $\mathcal O_{J(P/B)}$ that respects the order of differential functions.

\subsubsection{Homogeneous structure of the fibers} \label{sss:hom_fibers}
In general, if $\mathcal G\to B$ is a group bundle, the group composition lifts to the jets of sections of~$\mathcal G$,
\begin{gather*}J_k(\mathcal G/B)\times_B J_k(\mathcal G/B) \to J_k(\mathcal G/B),\qquad \big(j_x^k g\big)\big(j^k_x h\big) = j_x^k (gh).\end{gather*}
Thus, for all $k$, $J_k(\mathcal G/B)\to B$ is an algebraic group bundle and $J(\mathcal G/B)\to B$ is a pro-algebraic group bundle.

Let us consider the trivial group bundle $G\times B \to B$; $J_k(G/B)$ will stand for $J_k(G \times B/B)$. There is a natural fibered action of this group bundle on~$J_k(P/B)$,
\begin{gather*}J_{k}(P/B) \times_B J_k(G/B), \qquad \big(j^k_x u, j^k_x g\big) \mapsto j^k_x (u\cdot g),\end{gather*}
so that, for all $x\in B$, $J_k(P/B)_x$ becomes a principal homogeneous space for the action of the algebraic group $J_k(G/B)_x$. Those actions are compatible with the truncations of order; hence, taking the limit to infinite order we have that $J(P/B)_x$ is a principal homogeneous space for the action of the pro-algebraic group~$J(G/B)_x$.

\subsection{Prolongation of a principal connection}\label{ss:prolpc}

The $G$-invariant integrable distribution $\mathscr D$ defines closed subsets $\Gamma_{k}(\mathscr D) \subset J_k(P/B)$. They consist of $k$-jets of $\mathscr D$-horizontal sections. This closed subset $\Gamma_k(\mathscr D)$ is the algebraic differential equation of order $k$ associated to $\mathscr D$. The chain of maps
\begin{gather*}\Gamma(\mathscr D) = \lim_{\leftarrow} \Gamma_k(\mathscr D) \to \cdots \to \Gamma_k(\mathscr D) \to \cdots \to \Gamma_{2}(\mathscr D) \to \Gamma_{1}(\mathscr D) \to P\end{gather*}
are surjective.

$\Gamma_k(\mathscr D)$ is defined by a radical ideal $\mathcal E_k(\mathscr D)$ of $\mathcal O_{J_{k}(P/B)}$. Indeed, from the Frobenius integrability condition (flatness of $\mathscr D$) we obtain that $\mathcal E(\mathscr D)$ is the radical $\mathfrak X_{\rm rat}(B)$-differential ideal of $\mathcal O_{J(P/B)}$ spanned by $\mathcal E_1(\mathscr D)$. For each $k$, $\mathcal E_{k}(\mathscr D)$ is the intersection of $\mathcal E(\mathscr D)$ with the ring $\mathcal O_{J_{k}(P/B)}$. The ring of \emph{differential functions} on horizontal sections is the $\mathfrak X_{\rm rat}(B)$-differential ring quotient $\mathcal O_{\Gamma(\mathscr D)} = \mathcal O_{J(P/B)}/ \mathcal E(\mathscr D)$.

The composition with $\rho$ induces a map $\rho^*\colon J_k(G/S)\to J_k(G/B)$. Therefore we also have an action defined over $S$,
\begin{gather*}J_{k}(P/B) \times_S J_k(G/S), \qquad \big(j^k_x f, j^k_x g\big) \mapsto j^k_x (f\cdot (\rho\circ g)).\end{gather*}

\begin{Proposition}\label{pr:homogeneous}The action of $J_k(G/S)_{\rho(x)}$ on $\Gamma_k(\mathscr D)_x$ is free and transitive for any $x\in B$.
\end{Proposition}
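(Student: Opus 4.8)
The plan is to fix $x\in B$, put $s=\rho(x)$, and reduce everything to a local analytic trivialization of $\pi$ over a neighbourhood of $x$, which is harmless since both the action and the jets involved are purely local. In such a trivialization $P\simeq G\times B'$ a section $u$ of $P/B$ is a $G$-valued germ $\tilde u$, the horizontality condition reads $X(\tilde u)=\omega(X)\,\tilde u$ for every $X\in\mathscr F=\ker(\mathrm{d}\rho)$ (with $\omega$ the $\mathfrak g$-valued connection form of $\mathscr D$, acting on the left), and the action of a germ $g$ of section of $G/S$ is the right translation $\tilde u\mapsto \tilde u\cdot(g\circ\rho)$. The first thing to check is that the action is well defined on $\Gamma_k(\mathscr D)_x$: since $g\circ\rho$ is constant along the leaves of $\mathscr F$, for $X\in\mathscr F$ one computes
\begin{gather*}
X\big(\tilde u\cdot(g\circ\rho)\big)=X(\tilde u)\cdot(g\circ\rho)=\omega(X)\,\tilde u\cdot(g\circ\rho)=\omega(X)\,\big(\tilde u\cdot(g\circ\rho)\big),
\end{gather*}
so that $\tilde u\cdot(g\circ\rho)$ is again $\mathscr D$-horizontal. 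Note that the connection form multiplies on the left while the group acts on the right, which is precisely why the two operations do not interfere.

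For \emph{freeness}, suppose $g$ fixes $j^k_x u$, i.e.\ $j^k_x\big(\tilde u\cdot(g\circ\rho)\big)=j^k_x\tilde u$. Since $\tilde u(x)$ is invertible, cancellation in the jet group $J_k(G/B)_x$ forces $j^k_x(g\circ\rho)$ to be the identity jet; as $\rho$ is a submersion the induced map $\rho^*\colon J_k(G/S)_s\to J_k(G/B)_x$ is injective, whence $j^k_s g$ is the identity. Thus the stabilizer of any point is trivial.

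For \emph{transitivity}, I represent two given elements of $\Gamma_k(\mathscr D)_x$ by germs of $\mathscr D$-horizontal sections $u,v$ and set $h=\tilde u^{-1}\tilde v$. Using that $\tilde u$ and $\tilde v$ satisfy the same horizontality equation, for $X\in\mathscr F$ one gets
\begin{gather*}
X(h)=-\tilde u^{-1}X(\tilde u)\tilde u^{-1}\tilde v+\tilde u^{-1}X(\tilde v)=-\tilde u^{-1}\omega(X)\tilde v+\tilde u^{-1}\omega(X)\tilde v=0,
\end{gather*}
so $h$ is a first integral of $\mathscr F$. The key step is to upgrade this to an honest factorization: since $\rho$ is a submersion one may straighten it to a coordinate projection, and a germ annihilated by $\mathscr F$ then depends only on the base coordinates, i.e.\ $h=g\circ\rho$ for a germ $g\colon(S,s)\to G$. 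By construction $\tilde u\cdot(g\circ\rho)=\tilde v$, hence $j^k_s g$ carries $j^k_x u$ to $j^k_x v$.

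The only genuinely substantive point is this last factorization in the transitivity step, where flatness of the partial connection (ensuring that the prescribed jets are realized by true horizontal germs) and the submersivity of $\rho$ both enter; the computations giving well-definedness and freeness, together with the injectivity of $\rho^*$, are then formal consequences of the left/right interplay and of the jet group structure. Equivalently, the whole statement can be repackaged by fixing one reference horizontal germ to identify $\Gamma_k(\mathscr D)_x$ with $J_k(G/S)_s$, under which the action becomes right translation and both properties are immediate.
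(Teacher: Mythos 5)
Your proof is correct and takes essentially the same route as the paper: both arguments represent the two jets by germs of genuinely $\mathscr D$-horizontal sections and show that the unique transition function relating them is annihilated by $\mathscr F=\ker({\rm d}\rho)$, hence factors through $\rho$ as a germ $g\colon (S,\rho(x))\to G$. The paper leaves freeness implicit in the uniqueness of that transition function, whereas you spell it out via cancellation in the jet group and injectivity of $\rho^*\colon J_k(G/S)_{\rho(x)}\to J_k(G/B)_x$; this is added detail (as is the local trivialization and connection form $\omega$), not a different argument.
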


\begin{proof}Let $j^k_x u$ and $j^k_x v$ be in $\Gamma_k(\mathscr D)_x$. Representatives $u$ and $v$ can be taken as $\mathscr D$-horizontal sections defined in a neighborhood of~$x$. There is a unique section $g$ defined in this neighborhood~$G$ such that $u = v \cdot g$. Taking into account the~$u$ and $v$ are $\mathscr D$-horizontal, we have that $g$ is constant along the fibers of $\rho$, so it can be thought as a function from a neighborhood of $\rho(x)$ in~$S$ to~$G$. Finally we have, $j^k_x u = \big(j^k_x u\big) \cdot \big(j^k_{\rho(x)}g\big)$.
\end{proof}

\begin{Remark}\label{rm:homogeneous}It is important to note that the action is compatible with the truncation maps $J_{k+1}(G,S)\to J_k(G/S)$ and $\Gamma_{k+1}(\mathscr D))\to \Gamma_k(\mathscr D)$. Hence, we have that $J(G/S)_{\rho(x)}$ acts free and transitively in $\Gamma(\mathscr D)_x$.
\end{Remark}

\subsubsection[Trivialization of $J(G/S)$]{Trivialization of $\boldsymbol{J(G/S)}$}
By Proposition \ref{pr:homogeneous} we have that $\Gamma(\mathscr D)\to B$ is a bundle by principal homogeneous spaces. However, the structure group varies with the parameter in $S$. In order to have a principal bundle structure we should trivialize the group bundle $J(G/S)$.

Let $m$ be dimension of $S$ and $\tau_{m,k} G$ be the generalized tangent bundle of $G$ of rank $m$ and order $k$, i.e., the space of jets of order~$k$ of local analytic maps from $(\mathbb C^m,0)$ to $G$. It is an algebraic group, with the composition law, $\big(j_0^k g\big)\big(j_0^k h\big) = j_0^k (gh)$. The group ${\rm Aut}_k(\mathbb C^m,0)$ of jets of automorphisms act on $\tau_{m,k}G$ by group automorphisms on the right side by composition.

For this fixed $m$ we may take the projective limit in $k$ and thus we obtain the pro-algebraic group $\hat\tau_m G$ of formal maps from $\mathbb (\mathbb C^m,0)$ to $G$. In $\hat\tau_m G$ the group of formal automorphisms $\widehat{\rm Aut}(\mathbb C^m,0)$ acts by group automorphisms on the right side by composition.

By a \emph{frame} of order $k$ at $s\in S$ we mean a $k$-jet at $0\in \mathbb C^m$ of a locally invertible map from $(\mathbb C^m,0)$ to $(S,s)$. The frame bundle $R^k S \to S$ is a principal bundle, modeled over the group ${\rm Aut}_k(\mathbb C^m,0)$. Those bundles form a projective system and the projective limit $RS\to S$ is the frame bundle. It is a~pro-algebraic principal bundle modeled over the group $\widehat{\rm Aut}(\mathbb C^m,0)$ of formal analytic automophisms. A section of the frame bundle is called a \emph{moving frame}.

\begin{Proposition}\label{pr:principalGD}Any moving frame $\hat\varphi$ in $S$ induces by composition at the right side a triviali\-zation
\begin{gather*}\xymatrix{
J(G/S) \ar[rr]^-{\hat\varphi_*} \ar[rd] && \hat\tau_{m}G \times S \ar[ld] \\
& S,
}
\end{gather*}
and thus a structure of principal bundle in $\Gamma(\mathscr D)$,
\begin{gather*}\Gamma(\mathscr D)\times \hat\tau_m G \to \Gamma(\mathscr D),
\qquad (j_x u, j_0 g )\mapsto j_x u \cdot \big(j_0 g \circ \hat\varphi^{-1}_{\rho(x)} \circ j_x\rho\big). \end{gather*}
\end{Proposition}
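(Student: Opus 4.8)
The plan is to prove the statement in two stages: first establish the trivialization of the group bundle $J(G/S)$ induced by the frame, and then transport the homogeneous-space structure of Proposition~\ref{pr:homogeneous} across this trivialization to obtain a principal bundle structure on $\Gamma(\mathscr D)$.

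\textbf{Stage 1 (trivialization).} For each $s\in S$ I would define the fibrewise map $\hat\varphi_{*}\colon J(G/S)_s\to \hat\tau_m G$ by precomposition with the frame, $j_s g\mapsto j_s g\circ\hat\varphi_s$. Since $\hat\varphi_s$ is a jet of a locally invertible map $(\mathbb C^m,0)\to(S,s)$, this composite is again a formal map $(\mathbb C^m,0)\to G$, and the jet $\hat\varphi_s^{-1}$ of the inverse provides a two-sided inverse, so $\hat\varphi_*$ is a bijection on each fibre. It is a group homomorphism because the group law on both $J(G/S)_s$ and $\hat\tau_m G$ is the pointwise product in $G$, whence $(g_1g_2)\circ\hat\varphi_s=(g_1\circ\hat\varphi_s)(g_2\circ\hat\varphi_s)$ with no higher-order correction. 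Carrying this out at each finite order $k$ with $\tau_{m,k}G$, the composition of jets is given by polynomial (Fa\`a di Bruno) formulas, so $\hat\varphi_*$ is a regular isomorphism of algebraic group bundles $J_k(G/S)\to\tau_{m,k}G\times S$; these are compatible with the order truncations, and passing to the limit yields the asserted isomorphism of pro-algebraic group bundles.

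\textbf{Stage 2 (principal structure).} The key observation is that the operation in the statement factors as the over-$S$ action composed with the inverse trivialization. Writing $\Psi_x(j_0 g)=j_0 g\circ\hat\varphi_{\rho(x)}^{-1}\circ j_x\rho$, the inner part $j_0 g\circ\hat\varphi_{\rho(x)}^{-1}$ is exactly $\hat\varphi_*^{-1}(j_0 g)\in J(G/S)_{\rho(x)}$, and the final composition with $j_x\rho$ is the map $\rho^{*}\colon J(G/S)\to J(G/B)$. Thus $\Psi_x=\rho^{*}\circ\hat\varphi_*^{-1}$, and $j_x u\cdot\Psi_x(j_0 g)$ is precisely the over-$S$ action of $J(G/S)_{\rho(x)}$ on $\Gamma(\mathscr D)_x$ read through the isomorphism $\hat\tau_m G\simeq J(G/S)_{\rho(x)}$ of Stage~1. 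Since $\hat\varphi_*$ and $\rho^{*}$ are homomorphisms and the over-$S$ action is a morphism of pro-algebraic bundles, this defines a regular right action of the fixed pro-algebraic group $\hat\tau_m G$; associativity reduces to $\Psi_x(g_1)\Psi_x(g_2)=\rho^{*}\big(\hat\varphi_*^{-1}(g_1)\hat\varphi_*^{-1}(g_2)\big)=\Psi_x(g_1g_2)$, using that $\rho^{*}$ of a product of two gauges again factors through $\rho$.

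Freeness and transitivity on each fibre then follow at once from Proposition~\ref{pr:homogeneous} and Remark~\ref{rm:homogeneous}, as the over-$S$ action of $J(G/S)_{\rho(x)}$ is already free and transitive and $\hat\varphi_*$ is bijective. To upgrade this fibrewise free transitive action to a genuine principal bundle I would exhibit local sections: after restricting to an affine open where $\mathscr D$ is regular, a local $\mathscr D$-horizontal section $u$ gives a local section $j u$ of $\Gamma(\mathscr D)\to B$, and $(x,j_0 g)\mapsto j_x u\cdot\Psi_x(j_0 g)$ trivializes $\Gamma(\mathscr D)$ over that open set. I expect the main obstacle to be purely technical bookkeeping: verifying that all jet compositions are regular morphisms compatible with the projective limit in $k$, so that the construction stays in the pro-algebraic category rather than the merely formal-analytic one. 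The invertibility of the frame is what guarantees that $\hat\varphi^{-1}_*$ is also algebraic, and the pointwise nature of multiplication in $G$ is what makes $\hat\varphi_*$ a homomorphism, so the conceptual content is entirely the transport of structure of Proposition~\ref{pr:homogeneous} along a choice of moving frame.
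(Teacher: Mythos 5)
Your proposal is correct and takes essentially the same approach as the paper: the moving frame induces a fibrewise pro-algebraic group isomorphism $J(G/S)_s \simeq \hat\tau_m G$ by composition, and the principal bundle structure on $\Gamma(\mathscr D)$ is obtained by transporting the free and transitive action of Proposition~\ref{pr:homogeneous} and Remark~\ref{rm:homogeneous} through that isomorphism. Your factorization $\Psi_x = \rho^* \circ \hat\varphi_*^{-1}$, the associativity check, and the local-section remark simply make explicit the details that the paper's four-line proof leaves implicit.
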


\begin{proof} Let $\hat\varphi$ be a moving frame in $S$. Then, for $s\in S$ we have that $\hat \varphi_s$ is a formal isomorphism between $(\mathbb C^m,0)$ and $(S,s)$. Thus, it induces a pro-algebraic group isomorphism from $J(G/S)_s$ to $\hat\tau_m G$ that sends $j_sg$ to $j_0 (g\circ \hat\varphi_s)$. In this way we obtain the trivialization of the statement. Finally, by Proposition~\ref{pr:homogeneous} and Remark~\ref{rm:homogeneous} we have that the induced action of $\hat\tau_m G$ in $\Gamma(\mathscr D)$ gives the structure of a principal bundle.
\end{proof}

\begin{Remark}\label{rem:2trivializations} Let $\hat\varphi$ and $\hat\psi$ be two different moving frames in $S$. Then, there is a regular map~$\hat\gamma$ from $S$ to $\widehat{\rm Aut}(\mathbb C^m,0)$ such that $\hat\varphi(s) = \hat\psi(s)\hat\gamma(s)$. We have a commutative diagram
\begin{gather*}\xymatrix{
J(G/S) \ar[rr]^-{\hat\varphi_*} \ar[rrd] ^-{\hat\psi_*}
&& \hat\tau_{m}G \times S \ar[d]^-{\hat\gamma \times {\rm Id}_S} \\
&& \hat\tau_{m}G \times S
}
\xymatrix{(j_0g, s) \ar[d] \\ (j_0g\circ \hat\gamma(s),s). }
\end{gather*}
The group $\widehat{\rm Aut}(\mathbb C^m,0)$ acts on $\hat\tau_mG$ by group automorphisms. Thus, the two $\hat\tau_mG-$principal bundle structures induced in $\Gamma(\mathscr D)$ by $\hat\varphi$ and $\hat\psi$ are related by the fibered automorphism $\hat\gamma$ of $\hat\tau_mG\times S\to S$.
\end{Remark}

\begin{Remark}As before, our constructions are compatible with the truncation to any finite order. The moving frame $\varphi$ induces frames of any order, and thus, also induces compatible principal bundle structures in~$\Gamma_k(\mathscr D)$ modeled over $\tau_{m,k}G$ in the sense that the following diagram
\begin{gather*}\xymatrix{
\Gamma(\mathscr D)\times \hat\tau_m G \ar[r]\ar[d]& \Gamma(\mathscr D) \ar[d] \\
\Gamma_k(\mathscr D)\times \tau_{m,k} G \ar[r] & \Gamma_k(\mathscr D) }
\end{gather*}
is commutative.
\end{Remark}

\begin{Remark}\label{rm:isoS}For $k>\ell$ there are natural surjective morphisms ${\rm Iso}(\Gamma_{k}(\mathscr D))\to {\rm Iso}(\Gamma_{\ell}(\mathscr D))$. The projective limit of this system yields ${\rm Iso}(\Gamma(\mathscr D))$ which is a pro-algebraic Lie groupoid over~$B$. The construction of $\rm Iso(\Gamma_{k}(\mathscr D))$ may depend on the global frame $\hat\varphi$, a map between two fibers~$\Gamma(\mathscr D)_x$ and~$\Gamma(\mathscr D)_y$ may or not be $\hat\tau_m G$-equivariant for the structure induced by some global frame. However, according to Remark~\ref{rem:2trivializations}, if $\rho(x) = \rho(y)$, then a map between the fibers~$\Gamma(\mathscr D)_x$ and~$\Gamma(\mathscr D)_y$ is $\hat\tau_mG$-equivariant for some induced structure if and only if it is $\hat\tau_mG$-equivariant for any induced structure. Accordingly, the pro-algebraic Lie groupoid ${\rm Iso}_S(\Gamma(\mathscr D))$ does not depend on the choice of the global frame in~$S$.
\end{Remark}

\begin{Example}\label{Example_linear1}
Our main examples are linear differential equations with parameters. Here $P = {\rm GL}_k(\mathbb C)\times \mathbb C^{n+m}$,
$B = \mathbb C^{n+m}$, and $S = \mathbb C^m$. We consider a linear differential system with parameters
\begin{gather}\label{linear_1}
\frac{\partial U}{\partial x_i} = A_iU, \qquad i=1,\ldots,n,
\end{gather}
where $A_i$ are matrices with coefficients in $\mathbb C(x_1,\ldots,x_n,s_1,\ldots,s_m)$. The Frobenius integrability condition imposes here the compatibility equations:
\begin{gather*}\frac{\partial A_i}{\partial x_j} - \frac{\partial A_j}{\partial x_i} = [A_i,A_j].\end{gather*}
Equation \eqref{linear_1} can be seen itself as a sub-bundle of $J_1(P/B)$. In this case,
$J_1(P/B) = \mathbb C^{k^2(n+m)} \times {\rm GL_k}(\mathbb C)\times \mathbb C^{n+m}$, and $U_{x_i}$, $U_{s_j}$, $U$, $x$, $s$ is a system of coordinates. Equations
\begin{gather}\label{linear_or1}
U_{x_j} - A_iU = 0, \qquad i=1,\ldots,n,
\end{gather}
define $\Gamma_1(\mathscr D)$ as a subset of $J_1(P/B)$. A basis of the space of total derivative operators is given by $\partial_{x_i}^{\rm tot}$, $\partial_{s_j}^{\rm tot}$. We differentiate equations \eqref{linear_or1} and obtain
\begin{gather}\label{linear_or2}
U_{x_jx_k} - \frac{ \partial A_i}{\partial x_k}U - A_jA_k U = 0, \\
U_{x_js_k} - \frac{ \partial A_i}{\partial s_k}U - A_j U_{s_k} = 0. \label{linear_or2x}
\end{gather}
Equations \eqref{linear_or1}, \eqref{linear_or2}, and \eqref{linear_or2x} define $\Gamma_2(\mathscr D)$, and so on. Note that $x_i,s_j,U,U_{s_j},U_{s_js_k},\ldots$ are coordinates on $\Gamma(\mathscr D)$.
\end{Example}

\subsubsection[Prolongation of $\mathscr D$ to $\Gamma(\mathscr D)$]{Prolongation of $\boldsymbol{\mathscr D}$ to $\boldsymbol{\Gamma(\mathscr D)}$}
Here we show how to lift the $G$-invariant connection $\mathscr D$ to a projective system of $\tau_{m,k}G$-invariant connections $\mathscr D^{(k)}$ in $\Gamma_k(\mathscr D)$, and thus, a $\hat\tau_m G$-invariant connection in~$\Gamma(\mathscr D)$.

\begin{Lemma}\label{lm:total_equal_lift} Let $X$ be rational vector field tangent to $\mathscr F$. Along $\Gamma_k(\mathscr D)$, the total derivative $X^{\rm tot}$ coincides with $\tilde X^{(k)}$, the jet prolongation of the $\mathscr D$-horizontal lift of~$X$. Therefore $X^{\rm tot}$ and $\tilde X^{(\infty)}$ coincide as derivations of $\mathcal O_{\Gamma(\mathscr D)}$.
\end{Lemma}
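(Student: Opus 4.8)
The plan is to prove the equality by evaluating both derivations at an arbitrary jet $j_x^k u$ of a $\mathscr D$-horizontal section $u$, because these jets are precisely the points of $\Gamma_k(\mathscr D)$. I will use the geometric description of the prolongation through flows together with the defining formula of the total derivative. Throughout I work on a common domain of regularity of $X$, $\tilde X$ and $\mathscr D$, and I write $\phi_t$ for the local flow of $X$ on $B$ and $\sigma_t$ for the local flow of its $\mathscr D$-horizontal lift $\tilde X$ on $P$; since $\pi_*\tilde X = X$ these satisfy $\pi\circ\sigma_t = \phi_t\circ\pi$.

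The key step is to show that $\tilde X$ is tangent to the graph $\Gamma_u$ of every $\mathscr D$-horizontal section $u$. Indeed, $\tilde X$ is by construction tangent to $\mathscr D$, so $\tilde X_{u(x)}\in\mathscr D_{u(x)}$, while horizontality of $u$ means $\mathscr D_{u(x)}\subset {\rm d}_x u(T_x B) = T_{u(x)}\Gamma_u$; hence $\tilde X_{u(x)}\in T_{u(x)}\Gamma_u$ at every point of $\Gamma_u$. Therefore the flow $\sigma_t$ leaves $\Gamma_u$ invariant, and since $\sigma_t$ covers $\phi_t$ and $\Gamma_u$ is a graph, the only point of $\Gamma_u$ lying over $\phi_t(x)$ forces $\sigma_t(u(x)) = u(\phi_t(x))$. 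Equivalently, the section transported by the flow, $u_t := \sigma_t\circ u\circ\phi_t^{-1}$, coincides with $u$. In particular the flow $\Phi^{(k)}_t$ of $\tilde X^{(k)}$ sends $j_x^k u$ to $j^k_{\phi_t(x)} u_t = j^k_{\phi_t(x)} u\in\Gamma_k(\mathscr D)$, so $\tilde X^{(k)}$ is tangent to $\Gamma_k(\mathscr D)$ and restricts there to an honest rational vector field.

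The conclusion then follows by a direct comparison. For a differential function $F$ of order $k$ the flow formula for the prolongation gives
\[
\tilde X^{(k)}(F)\big(j_x^k u\big) = \left.\frac{{\rm d}}{{\rm d}t}\right|_{t=0} F\big(j^k_{\phi_t(x)} u_t\big) = \left.\frac{{\rm d}}{{\rm d}t}\right|_{t=0} F\big(j^k_{\phi_t(x)} u\big),
\]
where the second equality uses $u_t = u$, while the defining rule of the total derivative reads
\[
X^{\rm tot}(F)\big(j_x^{k+1}u\big) = \big(X(F\circ j^k u)\big)(x) = \left.\frac{{\rm d}}{{\rm d}t}\right|_{t=0} F\big(j^k_{\phi_t(x)} u\big).
\]
The two right-hand sides coincide, so $\tilde X^{(k)}$ and $X^{\rm tot}$ take the same value at every point $j_x^k u$ of $\Gamma_k(\mathscr D)$; reading this equality in the quotient ring proves $\tilde X^{(\infty)} = X^{\rm tot}$ as derivations of $\mathcal O_{\Gamma(\mathscr D)}$.

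The point demanding care is the apparent mismatch of orders: $X^{\rm tot}$ raises the order of a differential function by one, whereas $\tilde X^{(\infty)}$ preserves it, so the identity can only hold after restriction to $\Gamma(\mathscr D)$. This is reconciled exactly as above: on $\Gamma(\mathscr D)$ the equations $\mathcal E(\mathscr D)$ express the extra top-order coordinate of $j_x^{k+1}u$ through the lower-order data of a horizontal section, which is what legitimizes comparing a function on $J_{k+1}(P/B)$ with one on $J_k(P/B)$ inside $\mathcal O_{\Gamma(\mathscr D)}$; equivalently, the tangency of $\tilde X^{(k)}$ to $\Gamma_k(\mathscr D)$ already shows the prolongation does not leave the equation. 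A minor secondary point is that the prolongation is described by a local holomorphic flow while $X$ and $\tilde X$ are only rational, but the computation lives on the common domain of regularity, and two derivations of $\mathcal O_{\Gamma(\mathscr D)}$ that agree on this dense open set agree identically.
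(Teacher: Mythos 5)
Your proof is correct and takes essentially the same route as the paper's: both hinge on the observation that $\tilde X$ is tangent to the graph of any $\mathscr D$-horizontal section, so its flow leaves such sections invariant, and then compare the flow formula for the prolongation with the defining rule of the total derivative at points $j_x^k u$ of $\Gamma_k(\mathscr D)$. You merely spell out details the paper leaves implicit (the proof of the tangency claim, the invariance $u_t = u$, and the order-mismatch/quotient-ring bookkeeping), which strengthens rather than changes the argument.
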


\begin{proof}Let $X$ be a rational field tangent to $\mathscr F$. First, we have that the ideal of $\Gamma(\mathscr D)$ is a~differential ideal, so that, $X^{\rm tot}$ defines a derivation of $\mathcal O_{\Gamma(\mathscr D)}$.

Let us fix $j^k_xu \in \Gamma_k(\mathscr D)$. Let us assume that $u$ is a $\mathscr D$-horizontal local section. The vector field $\tilde X$ is tangent to the graph of~$u$. Thus, the flow of $\tilde X$ leaves the section $u$ invariant. Let us consider $\{\sigma_t\}_{t\in \mathbb R}$ the local flow of $X$ around $x$. Then, we have
\begin{gather*}\tilde X^{(\infty)}(j_xu) =\left.\frac{{\rm d}}{{\rm d}t}\right|_{t=0} j_{\sigma_t(x)}u.\end{gather*}
Thus, for any differential function $F$ of arbitrary order, we have
\begin{gather*}\big(X^{\rm tot} F\big)(j_xu) = X_x (F\circ ju) =\left.\frac{{\rm d}}{{\rm d}t}\right|_{t=0} F(j_{\sigma_t(x)}u) = \big(\tilde X^{(\infty)}F\big)(j_xu).\end{gather*}
We have that $X^{\rm tot}$ coincides with $\tilde X^{(\infty)}$ along $\Gamma(\mathscr D)$, and therefore they preserve the order of differential functions.
\end{proof}

\begin{Example}Let us compute ${\partial_{x_j}}^{\rm tot}$ restricted to $\Gamma(\mathscr D)$ in Example \ref{Example_linear1}. Let $u_{ij}$ be the $(i,j)$-entry of coordinate matrix $U$, and $a_{i,jk}$ be the $(j,k)$-entry of matrix $A_i$. The usual expression for the total derivative is
\begin{gather*}\frac{\partial}{\partial x_j}^{\rm tot} =
\frac{\partial}{\partial x_j} + u_{mk;x_j} \frac{\partial}{\partial u_{mk}} + u_{mk;x_\ell x_j}\frac{\partial}{\partial u_{mk;x_\ell}} +
u_{mk;s_\ell x_j}\frac{\partial}{\partial u_{mk;s_\ell}} + \cdots.\end{gather*}
We have that $x_j,s_\ell,u_{mk},u_{mk;s_\ell},\ldots$ is a system of coordinates on $\Gamma(\mathscr D)$. We obtain
\begin{gather*}
\left.\frac{\partial}{\partial x_j}^{\rm tot}\right|_{\Gamma(\mathscr D)} =
\frac{\partial}{\partial x_j} + a_{j,m\ell}u_{\ell k} \frac{\partial}{\partial u_{mk}} + \left(\frac{\partial a_{j,ml}}{\partial s_r} u_{\ell k} + a_{j,m\ell}u_{\ell k;s_r}\right)\frac{\partial}{\partial u_{mk;s_r}} + \cdots.\end{gather*}
Here we see that the total derivative of a function on $\Gamma_k(\mathscr D)$ is also defined on $\Gamma_k(\mathscr D)$.
\end{Example}

\begin{Definition}\label{df:prolD}For each $k = 1,\ldots,\infty$ we call the $k$-th prolongation of $\mathscr D$ to the distribution~$\mathscr D^{(k)}$ spanned by the vector fields of the form $\left.X^{\rm tot}\right|_{\Gamma_k(\mathscr D)}$ where $X$ is tangent to~$\mathscr F$.
\end{Definition}

\begin{Remark}\label{rm:geom_Dk} The above definition ensures $\mathscr D^{(k)}$ is a regular connection on $\Gamma_{k}(\mathscr D)$. Another way of defining $\mathscr D^{(k)}$ in geometrical terms is the following. Let $u$ be a local $\mathscr D$-horizontal section defined around $x\in B$,
\begin{gather*}\mathscr D^{(k)}_{j_x^k u} = {\rm d}\big(j^ku\big)(\mathscr D_x).\end{gather*}
The space $\mathscr D^{(k)}_{j_x^k u}\subset T_{j_x^ku}(\Gamma_k(\mathscr D))$ does not depend on the choice of~$u$. It is confined into the fiber of~$\rho(x)$. Any other $\mathscr D$-horizontal section passing through the same point~$u(x)$ shall coincide with~$u$ along the fiber of~$\rho(x)$.
\end{Remark}

\begin{Proposition}\label{pr:prol_pbundle}
The prolongation $\mathscr D^{(k)}$ is a $\tau_{m,k}G$-invariant connection on $\Gamma_k(\mathscr D)$ with parameters $S$.
The projection $\Gamma_{k+1}(\mathscr D) \to \Gamma_{k}(\mathscr D)$ maps $\mathscr D^{(k+1)}$ to $\mathscr D^{(k)}$.
\end{Proposition}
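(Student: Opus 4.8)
The plan is to check the three assertions separately, in each case reducing to one of the two available descriptions of $\mathscr D^{(k)}$: the total-derivative description of Definition~\ref{df:prolD} together with the identification $X^{\rm tot}=\tilde X^{(k)}$ along $\Gamma_k(\mathscr D)$ from Lemma~\ref{lm:total_equal_lift}, and the geometric description $\mathscr D^{(k)}_{j^k_x u}=d\big(j^k u\big)(\mathscr D_x)$ from Remark~\ref{rm:geom_Dk}. Since Remark~\ref{rm:geom_Dk} already presents $\mathscr D^{(k)}$ as a regular distribution on $\Gamma_k(\mathscr D)$, it remains to verify that it is a flat $\mathscr F$-connection, that it is $\tau_{m,k}G$-invariant, and that it is compatible with the truncation $\Gamma_{k+1}(\mathscr D)\to\Gamma_k(\mathscr D)$.

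First I would settle the connection structure. Under the bundle projection $\Gamma_k(\mathscr D)\to P\to B$ the field $\tilde X^{(k)}$ maps to $\tilde X$ and then to $X$, so $\mathscr D^{(k)}$ is projectable and projects onto $\mathscr F$; as these lifts project back to $X$, the assignment $X\mapsto \tilde X^{(k)}|_{\Gamma_k(\mathscr D)}$ is fiberwise injective and hence $\operatorname{rank}\mathscr D^{(k)}=\operatorname{rank}\mathscr F$. For flatness I combine the two bracket compatibilities already recorded: on $\Gamma_k(\mathscr D)$,
\begin{gather*}
\big[X^{\rm tot},Y^{\rm tot}\big]=\big[\tilde X^{(k)},\tilde Y^{(k)}\big]=[\tilde X,\tilde Y]^{(k)}=\widetilde{[X,Y]}^{(k)}=[X,Y]^{\rm tot},
\end{gather*}
using Lemma~\ref{lm:total_equal_lift}, compatibility of prolongation with brackets, and flatness of $\mathscr D$ (compatibility of the $\mathscr D$-horizontal lift with brackets). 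Since $\mathscr F$ is integrable, $[X,Y]$ is again tangent to $\mathscr F$, so this bracket is once more a generator of $\mathscr D^{(k)}$; thus $\mathscr D^{(k)}$ is involutive, i.e., a flat partial $\mathscr F$-connection, which is the meaning of a connection with parameters in $S$.

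I expect the $\tau_{m,k}G$-invariance to be the main obstacle. Fix a local map $g\colon S\to G$ and set $h=g\circ\rho\colon B\to G$, so that $h$ is constant along the leaves of $\mathscr F$; the induced right translation $R_h\colon j^k_x u\mapsto j^k_x(u\cdot h)$ realizes the $J_k(G/S)$-action, equivalently the $\tau_{m,k}G$-action after the trivialization of Proposition~\ref{pr:principalGD}. Because $\mathscr D$ is $G$-invariant and $h$ is $\rho$-constant, $u\cdot h$ is $\mathscr D$-horizontal whenever $u$ is, so $R_h$ preserves $\Gamma_k(\mathscr D)$ and Remark~\ref{rm:geom_Dk} applies to both $u$ and $u\cdot h$. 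The key point is the identity $R_h\circ j^k u=j^k(u\cdot h)$ of sections of $\Gamma_k(\mathscr D)$; differentiating it and restricting to $\mathscr D_x\subset T_xB$ yields
\begin{gather*}
dR_h\big(\mathscr D^{(k)}_{j^k_x u}\big)=dR_h\big(d(j^k u)(\mathscr D_x)\big)=d\big(j^k(u\cdot h)\big)(\mathscr D_x)=\mathscr D^{(k)}_{R_h(j^k_x u)},
\end{gather*}
which is precisely the invariance. The delicate bookkeeping here is to confirm that the whole structure group is covered rather than only its $\rho$-constant part: the transversal, $S$-directional jets of $g$ are accounted for because $h$ may vary across the leaves of $\mathscr F$, and under the trivialization of Proposition~\ref{pr:principalGD} these $k$-jets are exactly $\tau_{m,k}G$.

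Finally, the truncation compatibility is immediate from either description. Prolongation of vector fields commutes with the projection $J_{k+1}(P/B)\to J_k(P/B)$, so $dp\big(\tilde X^{(k+1)}\big)=\tilde X^{(k)}$ for $p\colon\Gamma_{k+1}(\mathscr D)\to\Gamma_k(\mathscr D)$; equivalently $p\circ j^{k+1}u=j^k u$ gives $dp\big(d(j^{k+1}u)(\mathscr D_x)\big)=d(j^k u)(\mathscr D_x)$. In either form this reads $dp\big(\mathscr D^{(k+1)}\big)=\mathscr D^{(k)}$, completing the argument.
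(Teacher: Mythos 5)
Your proof is correct and follows essentially the same route as the paper's: Lemma~\ref{lm:total_equal_lift} together with the two bracket compatibilities (horizontal lift and jet prolongation) for flatness, Remark~\ref{rm:geom_Dk} for the $\tau_{m,k}G$-invariance, and compatibility of prolongation with truncation for the last claim. The differences are purely expository: the paper chooses a commuting basis of rational fields tangent to $\mathscr F$, which makes the involutivity immediate, and it cites Remark~\ref{rm:geom_Dk} for the invariance without spelling out the right-translation computation and the fiberwise-over-$S$ bookkeeping that you carry out explicitly.
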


\begin{proof}
Let $\{X_i\}$ be a commuting basis of the $\mathbb C(B)$-space of rational vector fields tangent to $\mathscr F$. Their values span $\mathscr F$ on the generic point of $B$. In virtue of Lemma \ref{lm:total_equal_lift} we have that $\mathscr D^{(k)}$ is the distribution of vector fields spanned by $\big\{\tilde X_i^{(k)}|_{\Gamma_k(\mathscr D)}\big\}$. From the compatibility of the Lie bracket and the lift of vector fields we obtain that $\mathscr D^{(k)}$ is a Frobenius integrable distribution in $\Gamma(\mathscr D)$. Also, from the properties of the lift we obtain that $\mathscr D^{(k+1)}$ projects onto~$\mathscr D^{(k)}$. The invariance of $\mathscr D^{(k)}$ with respect to $\tau_{m,k}G$ follows automatically from Remark~\ref{rm:geom_Dk}.
\end{proof}

The projective limit $\mathscr D^{(\infty)}= \lim_{\leftarrow} \mathscr D^{(k)}$ is called the \emph{jet prolongation with respect to parameters} of $\mathscr D$. In virtue of Proposition~\ref{pr:prol_pbundle} $\mathscr D^{(\infty)}$ it is a $\hat\tau_m G$ invariant~$\mathscr F$-connection in $\Gamma(\mathscr D)$. Note that the construction of $\mathscr D^{(\infty)}$ is independent of the moving frame $\hat\varphi$ in $S$ that induces the $\hat\tau_m G$-principal bundle structure in $\Gamma(\mathscr D)$. Different moving frames give different actions that are not necessarily equivalent, but at least they are equivalent along the fibers of~$\rho$.

\subsection{The parameterized Galois groupoid}\label{ss:PGal}
 The field of \emph{rational differential invariants} of order $\leq k$ of $\mathscr D$ is the field $\mathbb C(\Gamma_k(\mathscr D))^{\mathscr D^{(k)}}$ of rational first integrals of $\mathscr D^{(k)}$. We have a tower of field extensions
\begin{gather*}\mathbb C(S) \subset \mathbb C(P)^{\mathscr D} \subset \dots \subset \mathbb C(\Gamma_k(\mathscr D))^{\mathscr D^{(k)}} \subset \mathbb C(\Gamma_{k+1}(\mathscr D))^{\mathscr D^{(k+1)}} \subset \cdots,\end{gather*}
and the union of all its members is the field of rational differential invariants of~$\mathscr D$. If~$X$ is a~rational vector field in~$B$ and~$f$ a~rational invariant of $\mathscr D$ of order $\leq k$ then $X^{\rm tot}f$ is a~rational invariant of $\mathscr D$ of order $\leq k+1$. Thus, we have that $\mathbb C(\Gamma(\mathscr D))^{\mathscr D^{(\infty)}}$ is a~differential field with derivations $\mathfrak X(B)^{\rm tot}$. Moreover, by the finiteness of differential subfields (for instance, by \cite[Theorem~14, p.~112]{kolchin1973}) we have that there is some~$\ell$ such that the field of differential invariants is spanned by $\mathbb C(\Gamma_\ell(\mathscr D))^{\mathscr D^{(\ell)}}$ and total differentiation.

Now let us translate, by means of the Galois correspondence from Remark~\ref{Galois_correspondence_3}, this tower of fields into a system of groupoids, and the field of rational differential invariants into a pro-algebraic groupoid.

In order to have a principal structure in $\Gamma(\mathscr D)$ we need to take a moving frame on~$S$. It is well known that there always exists a rational moving frame on $S$. We may, for instance, take the frame induced by a transcendence basis of~$\mathbb C(S)$. Thus, we fix a global frame $\hat\varphi$ in $S$, that we assume to be regular, after restriction to a suitable affine open subset.

\begin{Lemma}The Galois groupoid ${\rm GGal}\big(\mathscr D^{(k)}\big)$ does not depend on the choice of the moving frame $\hat\varphi$ on $S$.
\end{Lemma}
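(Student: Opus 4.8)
The plan is to show that the Galois groupoid $\mathrm{GGal}\big(\mathscr D^{(k)}\big)$, which by Definition~\ref{df:GGal_classical} is the rational subgroupoid of $\mathrm{Iso}\big(\Gamma_k(\mathscr D)\big)$ stabilizing the field of rational first integrals $\mathbb C(\Gamma_k(\mathscr D))^{\mathscr D^{(k)}}$, is insensitive to the choice of moving frame $\hat\varphi$. By the Galois correspondence of Proposition~\ref{Galois_correspondence_2}, the groupoid is completely determined by this field of invariants together with the ambient groupoid structure; the frame $\hat\varphi$ enters only through the $\hat\tau_m G$-principal bundle structure used to build $\mathrm{Iso}\big(\Gamma_k(\mathscr D)\big)$. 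So the statement reduces to two independent facts: first, that the field $\mathbb C(\Gamma_k(\mathscr D))^{\mathscr D^{(k)}}$ is frame-independent, and second, that the ambient object in which we take the stabilizing subgroupoid is frame-independent.

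The first fact is immediate from the construction of the prolongation. As noted in the paragraph following Proposition~\ref{pr:prol_pbundle}, the connection $\mathscr D^{(k)}$ is defined purely by total derivative operators $X^{\mathrm{tot}}|_{\Gamma_k(\mathscr D)}$ for $X$ tangent to $\mathscr F$ (Definition~\ref{df:prolD}), and this construction makes no reference to $\hat\varphi$. Hence its field of rational first integrals $\mathbb C(\Gamma_k(\mathscr D))^{\mathscr D^{(k)}}$ is the same regardless of which moving frame is chosen. First I would record this explicitly, citing Definition~\ref{df:prolD} and the remark that $\mathscr D^{(\infty)}$ is independent of $\hat\varphi$.

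For the second fact, the subtlety is that the groupoid $\mathrm{Iso}\big(\Gamma_k(\mathscr D)\big)$ of $\tau_{m,k}G$-equivariant maps between fibers of $\Gamma_k(\mathscr D)\to B$ could in principle depend on the frame, since the principal structure does. Here I would invoke Remark~\ref{rm:isoS}: although the full groupoid $\mathrm{Iso}\big(\Gamma_k(\mathscr D)\big)$ may depend on $\hat\varphi$, the subgroupoid $\mathrm{Iso}_S\big(\Gamma_k(\mathscr D)\big)$ of maps respecting the projection $\rho$ does not, because by Remark~\ref{rem:2trivializations} two frames differ over a fixed $s\in S$ by a fibered automorphism $\hat\gamma(s)$ of $\hat\tau_m G$, so equivariance over fibers of $\rho$ is a frame-independent condition. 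Since $\mathbb C(S) \subset \mathbb C(\Gamma_k(\mathscr D))^{\mathscr D^{(k)}}$, the Galois groupoid is in fact contained in $\mathrm{Iso}_S\big(\Gamma_k(\mathscr D)\big)$ (by the argument of the Proposition just before Section~\ref{sec:3}), and therefore lives inside a frame-independent ambient groupoid.

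Combining the two facts closes the argument: the Galois groupoid is the rational subgroupoid of the frame-independent groupoid $\mathrm{Iso}_S\big(\Gamma_k(\mathscr D)\big)$ stabilizing the frame-independent field $\mathbb C(\Gamma_k(\mathscr D))^{\mathscr D^{(k)}}$, and by the bijectivity in Remark~\ref{Galois_correspondence_3} this subgroupoid is uniquely determined. The main obstacle I anticipate is the second fact — namely carefully arguing that restricting to $\mathrm{Iso}_S$ genuinely removes the frame-dependence — since this requires the comparison of the two principal structures via $\hat\gamma$ from Remark~\ref{rem:2trivializations} and checking that the containment $\mathrm{GGal}\big(\mathscr D^{(k)}\big)\subset \mathrm{Iso}_S\big(\Gamma_k(\mathscr D)\big)$ holds; once that is in place the conclusion follows formally from the Galois correspondence.
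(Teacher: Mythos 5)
Your proposal is correct and follows essentially the same route as the paper's proof: both reduce the claim to the containment $\mathbb C(S)\subset \mathbb C(\Gamma_k(\mathscr D))^{\mathscr D^{(k)}}$, invoke the Galois correspondence of Remark~\ref{Galois_correspondence_3} to place ${\rm GGal}\big(\mathscr D^{(k)}\big)$ inside ${\rm Iso}_S(\Gamma_k(\mathscr D))$, and then use Remark~\ref{rm:isoS} (via Remark~\ref{rem:2trivializations}) to conclude that this ambient groupoid, hence the Galois groupoid, is frame-independent. Your explicit preliminary observation that $\mathscr D^{(k)}$ and its invariant field are themselves frame-independent is left implicit in the paper (it was recorded just after Proposition~\ref{pr:prol_pbundle}), but it is the same argument.
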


\begin{proof} Let $\hat\varphi$ and $\hat\psi$ two different moving frames on $S$, whose truncations to order $k$ induce respective $\tau_{m,k}G$-structures in $\Gamma_k(\mathscr D)\to B$. Thus, there are two different groupoids of gauge isomorphisms ${\rm Iso}^{\hat\varphi}(\Gamma_k(\mathscr D))$ and ${\rm Iso}^{\hat\psi}(\Gamma_k(\mathscr D))$. However, by Remark~\ref{rm:isoS} we have ${\rm Iso}_{S}(\Gamma_k(\mathscr D))\subset {\rm Iso}^{\hat\varphi}(\Gamma_k(\mathscr D)) \cap {\rm Iso}^{\hat\psi}(\Gamma_k(\mathscr D))$. We also have $\mathbb C(S) \subset \mathbb C(\Gamma_k(\mathscr D))^{\mathscr D^{(k)}}$. By Galois correspondence from Remark~\ref{Galois_correspondence_3} we have that the Galois groupoid of $\mathscr D^{(k)}$ is inside ${\rm Iso}_S(\Gamma_k(\mathscr D))$, hence independent of the choice of the global frame.
\end{proof}

\begin{Definition}\label{def:GaloisgroupP}Let us consider $\mathscr D$ a principal $G$-invariant connection on $P$ with parameters in~$S$.
\begin{enumerate}\itemsep=0pt
\item The Galois groupoid (and group bundle) of order $k$ with parameters of $\mathscr D$
is the Galois groupoid (and group bundle) of gauge isomorphisms (automorphisms) of $\Gamma_k(\mathscr D)$ that fixes rational differential invariants of $\mathscr D$ of order $\leq k$,
\begin{gather*}{\rm PGal}_k(\mathscr D) = {\rm GGal}\big(\mathscr D^{(k)}\big), \qquad {\rm Gal}_k(\mathscr D) = {\rm Gal}\big(\mathscr D^{(k)}\big).\end{gather*}
\item The Galois groupoid with parameters of $\mathscr D$ is the groupoid of gauge isomorphisms
of $\Gamma(\mathscr D)$ that fixes rational differential invariants of $\mathscr D$,
\begin{gather*}{\rm PGal}_{\infty}(\mathscr D) = \lim_{\infty \leftarrow k}{\rm GGal}\big(\mathscr D^{(k)}\big).\end{gather*}
\item The Galois group with parameters of $\mathscr D$ is the diagonal part of the Galois groupoid with parameters of $\mathscr D$,
\begin{gather*}{\rm Gal}_{\infty}(\mathscr D) = {\rm PGal}_{\infty}(\mathscr D)^{\rm diag}.\end{gather*}
\end{enumerate}
\end{Definition}

\subsection[Differential structure of the ${\rm Gau}(\Gamma(\mathscr D))$]{Differential structure of the $\boldsymbol{{\rm Gau}(\Gamma(\mathscr D))}$}\label{ss:PG}
The Galois group with parameters, as in Definition~\ref{def:GaloisgroupP}, is rational subgroup bundle of \linebreak ${\rm Gau}(\Gamma(\mathscr D))$. In what follows we present its natural differential group structure.

The fibers of $J(P/B)\to B$ are principal homogeneous spaces for the fibers of the group bundle $J(G/B)\to B$ (see Section~\ref{sss:hom_fibers}). We can consider the bundle ${\rm Gau}(J(P/B))\to B$ of gauge automorphisms as the quotient $(J(P/B)\times_B J(P/B))/J(G/B)$. This quotient exists as a pro-algebraic variety (at least on the generic point of~$B$): it can be constructed, for instance, trivializing the group bundle $J(G/B)\to B$ by means of a moving frame in~$B$.

\begin{Lemma}\label{lm:JG}Let $E \to B$ be a fiber bundle and $G$ an algebraic group acting on~$E$ and preserving the projection to~$B$. Assume $E/G$ exists as smooth algebraic variety. Then there is a canonical isomorphism $J(E/B) / J(G/B) \simeq J((E/G)/B)$ of bundles over~$B$.
\end{Lemma}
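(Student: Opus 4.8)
The plan is to realize the projection $q\colon E\to F$, where $F:=E/G$, as a principal $J(G/B)$-bundle at the level of jets, so that its base becomes canonically the quotient $J(E/B)/J(G/B)$. First I would define the canonical morphism $q_*\colon J(E/B)\to J(F/B)$ by post-composition, $q_*\big(j^k_x u\big)=j^k_x(q\circ u)$. It is a morphism of bundles over $B$, functorial in jets and compatible with the truncation maps $J_{k+1}\to J_k$; hence it is enough to argue at each finite order $k$ and then pass to the projective limit.

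Next I would verify that $q_*$ is invariant under the fibered action of $J(G/B)$ on $J(E/B)$ recalled in Section~\ref{sss:hom_fibers}. Since $q$ is $G$-invariant, for any section $g$ of $G\times B$ one has $q\circ(u\cdot g)=q\circ u$, and taking jets gives $q_*\big((j^k_x u)\cdot(j^k_x g)\big)=q_*\big(j^k_x u\big)$. Therefore $q_*$ descends to a morphism $\overline{q_*}\colon J(E/B)/J(G/B)\to J(F/B)$, which is our candidate for the isomorphism.

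The core of the proof, and the step I expect to be the main obstacle, is to show that $q_*$ is a principal $J(G/B)$-bundle; equivalently, that each fibre of $q_*$ is a single, locally trivial $J(G/B)$-orbit. The essential geometric input is that $q\colon E\to F$ is a principal $G$-bundle, which holds whenever the action is free with smooth geometric quotient --- the situation in all our applications, where $G$ acts diagonally and freely on $P\times_B P$ to produce $\mathrm{Gau}(P)$. Working over a (Zariski or analytic) open $U\subset F$ on which $E$ trivializes, $E|_{q^{-1}(U)}\simeq U\times G$ as $G$-spaces over $U$. A section $\bar u$ of $F\to B$ with $\bar u(x)\in U$ then lifts to sections $u=(\bar u,\gamma_u)$ of $E\to B$ with $\gamma_u\colon B\to G$, and two such lifts $u=(\bar u,\gamma_u)$, $v=(\bar v,\gamma_v)$ satisfy $q_*\big(j^k_x u\big)=q_*\big(j^k_x v\big)$ exactly when $j^k_x\bar u=j^k_x\bar v$. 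Setting $g=\gamma_u^{-1}\gamma_v$ one gets $u\cdot g=(\bar u,\gamma_v)$, whose first coordinate agrees with that of $v$ to order $k$ and whose second coordinate agrees exactly; hence $\big(j^k_x u\big)\cdot\big(j^k_x g\big)=j^k_x v$. This shows at once that the fibre of $q_*$ over $j^k_x\bar u$ is a $J(G/B)_x$-torsor and that $q_*$ is locally trivial, the trivialization identifying $q_*^{-1}(V_U)$ with $V_U\times_B J(G/B)$, where $V_U=\{\,j^k_x\bar u : \bar u(x)\in U\,\}\subset J(F/B)$.

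Finally, $q_*$ being a principal $J(G/B)$-bundle, its quotient by $J(G/B)$ is canonically its base $J(F/B)=J((E/G)/B)$, so $\overline{q_*}$ is an isomorphism of bundles over $B$; since each step respected the truncations $J_{k+1}\to J_k$, the isomorphism holds in every finite order and in the pro-algebraic limit. The delicate point throughout is the jet bookkeeping in the trivialization: one must check that the order-$k$ tangency of the projected sections $q\circ u$ and $q\circ v$ is absorbed entirely into the group jet $j^k_x g$, leaving no residual constraints, so that the torsor structure is exact at every order $k$.
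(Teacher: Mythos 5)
Your proposal is correct and takes essentially the same approach as the paper: the paper's entire proof consists of taking the map $jf\colon J(E/B)\to J((E/G)/B)$, $j\sigma\mapsto j(f\circ\sigma)$ --- your $q_*$ --- and asserting that it induces the desired isomorphism. Your additional work (descent via $G$-invariance, the fibrewise $J(G/B)_x$-torsor property, local triviality) simply fills in what the paper leaves implicit, including the hypothesis that $E\to E/G$ is a locally trivial principal $G$-bundle, which you correctly flag and which holds in the paper's application to $E=P\times_B P$ with the free diagonal action.
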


\begin{proof}Let $f$ be the projection $E \to E/G$ and $jf$ the induced map $J(E/B) \to J((E/G)/B)$. Because $jf(j\sigma) = j(f(\sigma))$, this maps induces the wanted isomorphism.
\end{proof}

\begin{Proposition}\label{pr:jcommutes}For the principal bundle $\pi\colon P \to B$ there is a canonical isomorphism ${\rm Gau }(J(P/B)) \simeq J({\rm Gau}(P)/B)$ of bundles over~$B$.
\end{Proposition}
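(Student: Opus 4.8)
The plan is to realize both sides as quotients of jet bundles by the prolonged structure group and then invoke Lemma \ref{lm:JG}. Recall from Section \ref{SectionRGGI} that ${\rm Gau}(P)$ is the diagonal part of ${\rm Iso}(P) = (P\times P)/G$, so that ${\rm Gau}(P) = (P\times_B P)/G$, where $G$ acts on $P\times_B P$ by the diagonal right action $g\cdot(p,q) = (pg,qg)$ and $[p,q]$ is the unique $G$-equivariant automorphism sending $p$ to $q$. Since $G$ acts freely on $P$, this diagonal action is free and the quotient exists as a smooth algebraic group bundle, namely $P\times_{\rm Adj}G \simeq {\rm Gau}(P)$.

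First I would apply Lemma \ref{lm:JG} with $E = P\times_B P$ (a fiber bundle over $B$) and the diagonal $G$-action, for which $E/G = {\rm Gau}(P)$. This yields a canonical isomorphism
\[
J\big((P\times_B P)/B\big)\,/\,J(G/B) \;\simeq\; J({\rm Gau}(P)/B).
\]
Next I would establish the natural identification $J\big((P\times_B P)/B\big) \simeq J(P/B)\times_B J(P/B)$: a $k$-jet at $x$ of a section of the fiber product is precisely a pair of $k$-jets at $x$ of sections of $P$, because a section of $P\times_B P$ is a pair of sections and tangency up to order $k$ is tested componentwise. Under this identification one must check that the $J(G/B)$-action induced by prolonging the diagonal $G$-action (through the functoriality used in the proof of Lemma \ref{lm:JG}) becomes the diagonal action of $J(G/B)$ on $J(P/B)\times_B J(P/B)$; this follows from compatibility of jet prolongation with products and with the fibered action $j^k_x u\cdot j^k_x g = j^k_x(u\cdot g)$ recorded in Section \ref{sss:hom_fibers}.

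Finally, recalling the construction of ${\rm Gau}(J(P/B))$ as $\big(J(P/B)\times_B J(P/B)\big)/J(G/B)$ given just before Lemma \ref{lm:JG} (legitimate since the fibers of $J(P/B)$ are principal homogeneous spaces for $J(G/B)$ by Section \ref{sss:hom_fibers}), I would chain the isomorphisms:
\[
{\rm Gau}(J(P/B)) = \big(J(P/B)\times_B J(P/B)\big)/J(G/B) \simeq J\big((P\times_B P)/B\big)/J(G/B) \simeq J({\rm Gau}(P)/B),
\]
which is the desired canonical isomorphism of bundles over $B$.

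The main obstacle is the middle step: verifying that the natural bijection $J\big((P\times_B P)/B\big)\simeq J(P/B)\times_B J(P/B)$ intertwines the prolonged diagonal $G$-action with the diagonal $J(G/B)$-action, so that passage to the quotient is compatible on both sides. One must also take mild care that the quotient by $J(G/B)$ exists as a pro-algebraic variety at least over the generic point of $B$, which is ensured by trivializing $J(G/B)$ by means of a moving frame in $B$, exactly as remarked before the lemma.
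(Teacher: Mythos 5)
Your proposal is correct and is essentially the paper's own proof: the paper simply says ``it suffices to consider the case $E = P\times_B P$ in Lemma~\ref{lm:JG},'' leaving implicit the identifications $J\big((P\times_B P)/B\big)\simeq J(P/B)\times_B J(P/B)$ and ${\rm Gau}(J(P/B)) = \big(J(P/B)\times_B J(P/B)\big)/J(G/B)$ that you spell out and verify. Your version is a more careful writing of the same argument, with the compatibility of the prolonged diagonal action correctly identified as the point needing checking.
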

\begin{proof}It suffices to consider the case $E = P\times_B P$ in Lemma~\ref{lm:JG}.
\end{proof}

From this, we know that ${\rm Gau}(J(P/B))$ has a structure of differential algebraic group. Let us consider now the bundle of jets of $\mathscr D$-horizontal sections $\Gamma(\mathscr D)\subset J(P/B)$. It is a \emph{differential subvariety} of~$J(P/B)$ in the sense that it is the zero locus of the differential ideal $\mathcal E(\mathscr D)\subset \mathcal O_{J(P/B)}$ (see Section~\ref{ss:prolpc}).

\begin{Theorem}\label{th:Gau_is_adj}The group bundle of gauge automorphisms $\Gamma(\mathscr D)$ is identified with the differential algebraic subgroup of gauge symmetries of~$\mathscr D$, defined by the group connection ${\rm Adj}(\mathscr D)$,
\begin{gather*}{\rm Gau}(\Gamma(\mathscr D))\simeq \Gamma({\rm Adj}(\mathscr D))\subset J({\rm Gau}(P)/B).\end{gather*}
\end{Theorem}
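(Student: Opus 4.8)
The plan is to realize both group bundles as sub-loci of $J({\rm Gau}(P)/B)\simeq {\rm Gau}(J(P/B))$ (Proposition~\ref{pr:jcommutes}) and to show they cut out the same set. First I would identify ${\rm Gau}(\Gamma(\mathscr D))$ with the subgroup bundle of ${\rm Gau}(J(P/B))$ consisting of those gauge automorphisms of $J(P/B)$ that preserve the differential subvariety $\Gamma(\mathscr D)$; then I would identify that stabiliser with $\Gamma({\rm Adj}(\mathscr D))$. Everything is carried out fibrewise over $x\in B$ and compatibly with truncation to finite order, so the pro-algebraic statement follows by passing to the projective limit.

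For the first identification, fix $x$ and recall from Section~\ref{sss:hom_fibers} that $J(P/B)_x$ is a principal homogeneous space under $J(G/B)_x$, while by Proposition~\ref{pr:homogeneous} and Remark~\ref{rm:homogeneous} the fibre $\Gamma(\mathscr D)_x$ is a single orbit, principal homogeneous under the subgroup $J(G/S)_{\rho(x)}\simeq \hat\tau_m G$ (embedded via $\rho^*$ and the chosen frame). A gauge automorphism $\Psi$ of $J(P/B)_x$ that preserves $\Gamma(\mathscr D)_x$ restricts to a $\hat\tau_m G$-equivariant automorphism of $\Gamma(\mathscr D)_x$, since $J(G/B)_x$-equivariance restricts to $J(G/S)_{\rho(x)}$-equivariance. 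This restriction map is injective, because $\Psi$ is determined by its value at a single point and $\Gamma(\mathscr D)_x$ contains such a point; and it is surjective: given $\phi\in{\rm Gau}(\Gamma(\mathscr D))_x$ and a base point $p_0\in\Gamma(\mathscr D)_x$, let $\Psi$ be the unique $J(G/B)_x$-equivariant automorphism of $J(P/B)_x$ with $\Psi(p_0)=\phi(p_0)$, and write an arbitrary point of $\Gamma(\mathscr D)_x$ as $p_0\cdot h$ with $h\in\hat\tau_m G$. Equivariance of both $\Psi$ and $\phi$ gives $\Psi(p_0\cdot h)=\phi(p_0)\cdot h=\phi(p_0\cdot h)$, so $\Psi|_{\Gamma(\mathscr D)_x}=\phi$ and $\Psi$ preserves $\Gamma(\mathscr D)_x$. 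Hence ${\rm Gau}(\Gamma(\mathscr D))_x$ is exactly the stabiliser of $\Gamma(\mathscr D)_x$ in ${\rm Gau}(J(P/B))_x$.

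It remains to match this stabiliser with $\Gamma({\rm Adj}(\mathscr D))_x$ inside $J({\rm Gau}(P)/B)_x$, using that the jet action is the prolongation of ${\rm Gau}(P)\times_B P\to P$. One inclusion is immediate from Remark~\ref{rm:gaugesymmetries}: that action carries ${\rm Adj}(\mathscr D)\times_B\mathscr D$ onto $\mathscr D$, so if $g$ is an ${\rm Adj}(\mathscr D)$-horizontal section and $u$ a $\mathscr D$-horizontal section then $g\cdot u$ is $\mathscr D$-horizontal, whence $j_xg\cdot j_xu=j_x(g\cdot u)\in\Gamma(\mathscr D)$ and $j_xg$ stabilises $\Gamma(\mathscr D)$. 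The converse is the heart of the matter and the step I expect to be the main obstacle: if $\Psi=j_xg$ (for a local section $g$ of ${\rm Gau}(P)$, via Proposition~\ref{pr:jcommutes}) preserves $\Gamma(\mathscr D)$, I must show $g$ is ${\rm Adj}(\mathscr D)$-horizontal to the corresponding order, that is $j_xg\in\Gamma({\rm Adj}(\mathscr D))$. I would argue order by order. Writing out the $\mathscr F$-horizontality of $g\cdot u$ expresses the $\mathscr F$-derivatives of $g\cdot u$ through its value; subtracting the horizontality of $u$ isolates the $\mathscr F$-derivatives of $g$ evaluated against $u$. The decisive point is that, by Proposition~\ref{pr:homogeneous}, as $u$ ranges over $\mathscr D$-horizontal sections its value $u(x)$ sweeps the whole fibre $P_x$ while its $s$-jet is free; this lets me cancel $u$ and recover precisely the defining equations of ${\rm Adj}(\mathscr D)$ for $g$ (the Lax equation $\partial_{x_i}\sigma=[A_i,\sigma]$ in the linear model of Example~\ref{Example_linear1}), with the $s$-jet of $g$ remaining unconstrained. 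Integrability (flatness) of ${\rm Adj}(\mathscr D)$ ensures these first-order equations prolong consistently, so the entire $\mathscr F$-part of the jet of $g$ is forced to be the ${\rm Adj}(\mathscr D)$-horizontal one, giving $j_xg\in\Gamma({\rm Adj}(\mathscr D))$. Combining the two directions yields the fibrewise equality, and compatibility with truncation delivers the isomorphism ${\rm Gau}(\Gamma(\mathscr D))\simeq\Gamma({\rm Adj}(\mathscr D))$.
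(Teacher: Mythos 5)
Your proposal is correct and takes essentially the same approach as the paper: both realize ${\rm Gau}(\Gamma(\mathscr D))$ inside $J({\rm Gau}(P)/B)$ via Proposition~\ref{pr:jcommutes} together with the principal homogeneity of $\Gamma(\mathscr D)$ under $J(G/S)$ (Proposition~\ref{pr:homogeneous}), and both identify the image with $\Gamma({\rm Adj}(\mathscr D))$ through the characterization of Remark~\ref{rm:gaugesymmetries}, namely that a jet preserves $\mathscr D$-horizontal sections if and only if it is ${\rm Adj}(\mathscr D)$-horizontal. The only divergence is one of detail, not of method: where the paper invokes that remark directly, you re-derive the converse inclusion by an explicit order-by-order jet computation.
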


\begin{proof}The differential subvariety $\Gamma(\mathscr D) \subset J(P/B)$ is principal for the action of the subgroup bundle $J(G/S)\times B \subset J(G/B)$ (note that two $\mathscr D$-horizontal sections are related by a right translation which is constant along the fibers of $\rho$), this being a differential algebraic subgroup
of~$J(G/B)$.

By means of the general construction of ${\rm Gau}(P) = (P\times_B P)/G$ and Proposition~\ref{pr:jcommutes} we have a commutative diagram
\begin{gather*}\xymatrix{
\Gamma(\mathscr D)\times_B \Gamma(\mathscr D) \ar[r]\ar[d] &
J(P/B)\times J(P/B) \ar[d] \\
{\rm Gau}(\Gamma(\mathscr D)) \ar[r]^-{i} &
J({\rm Gau}(P)/B).
}\end{gather*}
It is clear that if a pair $(j_xu,j_xv)$ of jets of $\mathscr D$-horizontal sections are related by an element of~$J(G/B)_x$, then this element is constant along the fibers of $\rho$. Therefore, the map $i$ is injective. Finally, an element $j_x\sigma \in J(\rm Gau(P)/B)$ is in the image of $i$ if and only if it transform $\mathscr D$-horizontal sections into $\mathscr D$-horizontal sections, if and only if it is a jet of a gauge symmetry of~$\mathscr D$, if and only if it is ${\rm Adj}(\mathscr D)$-horizontal.
\end{proof}

\begin{Remark}\label{rm:compatibility}
The action of ${\rm Gau}(\Gamma(\mathscr D))$ on $\Gamma(\mathscr D)$ is, by definition, compatible with the differential structure, in the sense that the co-action
\begin{gather*}\alpha^*\colon \ \mathcal O_{\Gamma(\mathscr D)} \to
\mathcal O_{{\rm Gau}(\Gamma(\mathscr D))} \otimes_{\mathbb C(B)}
\mathcal O_{\Gamma(\mathscr D)}\end{gather*}
is compatible with the total derivative operators.
\end{Remark}

\begin{Theorem}\label{th:Gal_is_DAG} The parameterized Galois group bundle ${\rm Gal}_{\infty}(\mathscr D)$ is a differential algebraic subgroup of $J({\rm Gau}(P)/B)$.
\end{Theorem}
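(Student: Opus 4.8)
The plan is to show that ${\rm Gal}_{\infty}(\mathscr D)$ is cut out inside $J({\rm Gau}(P)/B)$ by a differential ideal; since it is already known to be a rational subgroup bundle of ${\rm Gau}(\Gamma(\mathscr D))$, and hence closed under the group operations, this is all that the assertion ``differential algebraic subgroup'' requires. By Theorem~\ref{th:Gau_is_adj} the ambient bundle ${\rm Gau}(\Gamma(\mathscr D))\simeq\Gamma({\rm Adj}(\mathscr D))$ is itself a differential subvariety of $J({\rm Gau}(P)/B)$, defined by a differential ideal $\mathcal E({\rm Adj}(\mathscr D))$ of $\mathcal O_{J({\rm Gau}(P)/B)}$. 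Working modulo this ideal, that is in $\mathcal O_{{\rm Gau}(\Gamma(\mathscr D))}=\mathcal O_{J({\rm Gau}(P)/B)}/\mathcal E({\rm Adj}(\mathscr D))$, I would reduce the theorem to the single statement that the ideal $\mathcal I$ defining ${\rm Gal}_{\infty}(\mathscr D)$ there is stable under every total derivative $X^{\rm tot}$, $X\in\mathfrak X_{\rm rat}(B)$; its preimage in $\mathcal O_{J({\rm Gau}(P)/B)}$ is then automatically differential.

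First I would pin down the generators of $\mathcal I$. By the Galois correspondence (Proposition~\ref{Galois_correspondence_2}, Remark~\ref{Galois_correspondence_3}) together with Definition~\ref{def_diagonalpart}, a gauge automorphism $\sigma$ belongs to ${\rm Gal}_{\infty}(\mathscr D)$ precisely when it fixes the field $\mathbb K=\mathbb C(\Gamma(\mathscr D))^{\mathscr D^{(\infty)}}$ of rational differential invariants, that is $\sigma^*f=f$ for all $f\in\mathbb K$. Denoting by $\alpha^*$ the co-action of Remark~\ref{rm:compatibility} and setting, for a regular invariant $f\in\mathbb K\cap\mathcal O_{\Gamma(\mathscr D)}$,
\[
\delta f:=\alpha^*f-1\otimes f=\sum_j a^f_j\otimes b^f_j\in\mathcal O_{{\rm Gau}(\Gamma(\mathscr D))}\otimes_{\mathbb C(B)}\mathcal O_{\Gamma(\mathscr D)},
\]
with the $b^f_j$ chosen $\mathbb C(B)$-linearly independent, the condition $\sigma^*f=f$ is equivalent to the vanishing at $\sigma$ of every coefficient $a^f_j$. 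Thus $\mathcal I$ is generated by the family $\{a^f_j\}$ as $f$ ranges over $\mathbb K\cap\mathcal O_{\Gamma(\mathscr D)}$.

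The core of the argument is then to differentiate these generators. The field $\mathbb K$ is a \emph{differential} subfield: as recalled in Section~\ref{ss:PGal}, $X^{\rm tot}f$ is again a rational differential invariant whenever $f$ is. By the compatibility of $\alpha^*$ with the total derivatives (Remark~\ref{rm:compatibility}), $\alpha^*(X^{\rm tot}f)=X^{\rm tot}(\alpha^*f)$, where on the tensor product $X^{\rm tot}$ acts by the Leibniz rule; since $X^{\rm tot}(1\otimes f)=1\otimes X^{\rm tot}f$, this yields
\[
\delta\big(X^{\rm tot}f\big)=\sum_j \big(X^{\rm tot}a^f_j\big)\otimes b^f_j+\sum_j a^f_j\otimes\big(X^{\rm tot}b^f_j\big).
\]
Because $X^{\rm tot}f\in\mathbb K$, all the $\mathbb C(B)$-basis coefficients of $\delta(X^{\rm tot}f)$ already lie in $\mathcal I$; the second sum on the right contributes only $\mathbb C(B)$-combinations of the generators $a^f_j\in\mathcal I$, so the basis coefficients of the first sum lie in $\mathcal I$ as well. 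Finally, the $\mathbb C(B)$-linear independence of the $b^f_j$ makes the resulting linear system invertible over $\mathbb C(B)$, and I recover each $X^{\rm tot}a^f_j\in\mathcal I$ individually. Hence $X^{\rm tot}\mathcal I\subset\mathcal I$, so $\mathcal I$ is a differential ideal, and transporting it back through $\mathcal E({\rm Adj}(\mathscr D))$ exhibits ${\rm Gal}_{\infty}(\mathscr D)$ as a differential algebraic subgroup of $J({\rm Gau}(P)/B)$.

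The hard part will be the bookkeeping in the last two steps: justifying that the invariance condition really is captured by the vanishing of the finitely many $\mathbb C(B)$-independent coefficients $a^f_j$ (so that $\mathcal I$ has this explicit description), and controlling the passage to the diagonal part in Definition~\ref{def_diagonalpart}. The latter is harmless here only because ${\rm Gal}_{\infty}(\mathscr D)$ is a rational subgroup bundle, so all its irreducible components dominate $B$ and the closure defining the diagonal part introduces no spurious components on which differentiation could fail; making this precise, rather than the formal computation with $\delta$, is where the real care is needed.
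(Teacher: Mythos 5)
Your overall strategy is the same as the paper's (the Hopf part comes for free from the group structure, and everything reduces, via Theorem~\ref{th:Gau_is_adj} and Remark~\ref{rm:compatibility}, to showing that the defining ideal is stable under total derivatives), and your Leibniz/linear-independence computation showing $X^{\rm tot}a^f_j\in\mathcal I$ is sound as far as it goes. But there is a genuine gap one step earlier, in the sentence ``Thus $\mathcal I$ is generated by the family $\{a^f_j\}$ as $f$ ranges over $\mathbb K\cap\mathcal O_{\Gamma(\mathscr D)}$.'' This presupposes that a gauge automorphism fixing every \emph{regular} invariant $f\in\mathbb K\cap\mathcal O_{\Gamma(\mathscr D)}$ fixes the whole field $\mathbb K=\mathbb C(\Gamma(\mathscr D))^{\mathscr D^{(\infty)}}$, i.e., that $\mathbb K$ is the fraction field of $\mathbb K\cap\mathcal O_{\Gamma(\mathscr D)}$. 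That is not justified anywhere, and it is not a formal fact: for a foliation or group action, rational invariants are in general \emph{not} quotients of invariant regular functions (the cofactor of the numerator and denominator of an invariant fraction need not be a logarithmic derivative of a regular element, so the fraction need not be rewritable with invariant numerator and denominator). Without this, what your argument proves is that the possibly larger group of automorphisms fixing all regular invariants is a differential algebraic subgroup, not that ${\rm Gal}_{\infty}(\mathscr D)$ itself is.

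The paper's proof is organized precisely to avoid this point, and the comparison is instructive. It takes finitely many \emph{differential} generators $h_1,\ldots,h_k$ of $\mathbb K$ over $\mathbb C(S)$ (Kolchin's finiteness theorem, already invoked in Section~\ref{ss:PGal}); these $h_i$ are merely rational, so it writes $h_i=P_i/Q_i$ with $P_i,Q_i$ regular but \emph{not} individually invariant, and encodes invariance of the fraction by the cross-multiplied elements $F_i=\alpha^*(P_i)(1\otimes Q_i)-\alpha^*(Q_i)(1\otimes P_i)$. It then takes $\mathcal I$ to be the \emph{radical differential ideal} generated by the $F_i$, so differential stability holds by construction, and the only thing to check (via Remark~\ref{rm:compatibility} and the fact that the $h_i$ differentially generate $\mathbb K$) is that the zero locus of $\mathcal I$ is exactly ${\rm Gal}_{\infty}(\mathscr D)\times_B\Gamma(\mathscr D)$. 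Note that your clean computation does not transfer to the $F_i$: differentiating a cross-multiplied expression produces quotient-rule cross terms, which is presumably why the paper inverts the logic (build in the differential structure, verify the zero locus) rather than differentiating explicit generators as you do. A secondary, more minor point: even granting your zero-locus claim, ``generated by $\{a^f_j\}$'' should be ``the radical of the ideal generated by $\{a^f_j\}$'', after which you would use that radicals of differential ideals are differential in characteristic zero, together with Cartier's theorem (Remark~\ref{Cartier}) for the Hopf side; this part is routine, but the identification of the zero locus is not, and as written the proof does not establish the theorem.
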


\begin{proof}Let us compute the equations of ${\rm Gal}_{\infty}(\mathscr D)$ inside ${\rm Gau}(\Gamma(\mathscr D))$ in order to verify that it is the zero locus of a~differential Hopf ideal in $\mathcal O_{{\rm Gau }(\Gamma(\mathscr D))}$. From the group bundle structure we already know that the ideal of ${\rm Gal}_{\infty}(\mathscr D)$ is a Hopf ideal; it suffices to check that it is a differential ideal.

By definition,
${\rm Gal}_{\infty}(\mathscr D)$ is the group of gauge automorphisms commuting with rational differential invariants
$\mathbb C(\Gamma(\mathscr D))^{\mathscr D^{(\infty)}}$. This field is differentially finitely generated over $\mathbb C(S)$, let us take a system of generators.
\begin{gather*}\mathbb C(\Gamma(\mathscr D))^{\mathscr D^{(\infty)}} =
\mathbb C(S)\langle h_1,\ldots,h_k \rangle.\end{gather*}
There is some $N$ big enough such that the differential functions $h_i$ belong to $\mathbb C(\Gamma_N(\mathscr D))$. By construction (and restricting our considerations to a suitable open subset of $B$) $\Gamma_N(\mathscr D)$ is an affine algebraic variety, so that we have
\begin{gather*}h_i = \frac{P_i}{Q_i}\end{gather*}
with $P_i$, $Q_i$ in $\mathcal O_{\Gamma(\mathscr D)}$. Now, we define
\begin{gather*}F_i = \alpha^*(P_i)(1\otimes Q_i) - \alpha^*(Q_i)(1\otimes P_i),\end{gather*}
which are elements of $\mathcal O_{{\rm Gau}(\Gamma(\mathscr D))} \otimes_{\mathbb C(B)} \mathcal O_{\Gamma(\mathscr D)}$. We consider $\mathcal I$ the radical differential ideal $\{F_1,\ldots,F_k\}$. From Remark \ref{rm:compatibility} we have that the zero locus of $\mathcal I$ is ${\rm Gal}_{\infty}(\mathscr D)\times_B \Gamma(\mathscr D)$. Therefore, the ideal of the Galois group bundle is the differential-Hopf ideal $i^*(\mathcal I)$ for the canonical inclusion $i^*\colon \mathcal O_{{\rm Gau}(\Gamma(\mathscr D))}\to \mathcal O_{{\rm Gau}(\Gamma(\mathscr D))} \otimes_{\mathbb C(B)} \mathcal O_{\Gamma(\mathscr D)}$.
\end{proof}

\subsection{Isomonodromic deformations}\label{ss:isom}

An \emph{isomonodromic deformation} of the connection $\mathscr D$ with parameters in $S$ is a partial $G$-invariant connection $\tilde{\mathscr D}$ extending $\mathscr D$.
If we take $\tilde{\mathscr F} = \pi_*\big(\tilde{\mathscr D}\big)$, then $\tilde{\mathscr D}$ is a partial $\tilde{\mathscr F}$-connection. Paths along the leaves of $\rho_*(\tilde{\mathscr F})$ are called \emph{isomonodromic paths}. The main application is that the monodromy representation of the connection with parameters is constant (up to conjugation) along isomonodromic paths.

\begin{Remark}In the analytic context, the restriction of $\mathscr D$ to a leaf of $\tilde{\mathscr F}$ is an isomonodromic family. Under some assumptions we have that $\mathscr D$ is an isomonodromic family if and only if it admits an isomonodromic deformation which is a $TB$-connection (see \cite[Section~5]{cassidy2005galois} and references therein).
\end{Remark}

An important point about isomonodromic deformations is that they give us first order equations for the Galois group with parameters. The following lemma is a generalization (in our geometrical setting) of Proposition~5.4 in \cite{cassidy2005galois}.

\begin{Lemma}\label{lm:key} If $\tilde{\mathscr D}$ is an isomonodromic deformation of~$\mathscr D$, then ${\rm Gal}_{\infty}(\mathscr D) \subset {\Gamma}\big({\rm Adj}\big(\tilde{\mathscr D}\big)\big)$.
\end{Lemma}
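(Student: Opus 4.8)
The plan is to prolong the isomonodromic deformation $\tilde{\mathscr D}$ to the bundle $\Gamma(\mathscr D)$ and then to compare Galois groupoids through the correspondence of Proposition~\ref{Galois_correspondence_2}. The first and essential step is to build a flat connection $\tilde{\mathscr D}^{(\infty)}$ on $\Gamma(\mathscr D)\to B$, in the direction of $\tilde{\mathscr F}$, that extends $\mathscr D^{(\infty)}$. The geometric point is that, since $\tilde{\mathscr D}$ is flat and contains $\mathscr D$, the flow of the $\tilde{\mathscr D}$-horizontal lift $\tilde Y$ of a vector field $Y$ tangent to $\tilde{\mathscr F}$ moves the graph of a $\mathscr D$-horizontal section to the graph of another $\mathscr D$-horizontal section (the $\mathscr D$-leaf is carried inside the ambient $\tilde{\mathscr D}$-leaf). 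Hence $\tilde Y$ prolongs to a vector field $\tilde Y^{(\infty)}$ tangent to $\Gamma(\mathscr D)$, exactly as in Definition~\ref{df:prolD} and Remark~\ref{rm:geom_Dk}, and these span $\tilde{\mathscr D}^{(\infty)}$. Because for $Y$ tangent to $\mathscr F\subseteq\tilde{\mathscr F}$ the $\tilde{\mathscr D}$-horizontal lift coincides with the $\mathscr D$-horizontal lift (uniqueness of lifts together with $\mathscr D\subseteq\tilde{\mathscr D}$), this connection extends $\mathscr D^{(\infty)}$; its flatness and $\hat\tau_mG$-invariance follow as in Proposition~\ref{pr:prol_pbundle}.

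Next I would run the Galois correspondence. From $\mathscr D^{(\infty)}\subseteq\tilde{\mathscr D}^{(\infty)}$ we get the inclusion of invariant fields $\mathbb C(\Gamma(\mathscr D))^{\tilde{\mathscr D}^{(\infty)}}\subseteq\mathbb C(\Gamma(\mathscr D))^{\mathscr D^{(\infty)}}$. Since the map of Proposition~\ref{Galois_correspondence_2} is an anti-isomorphism of lattices, and the Galois groupoids are defined through their invariant fields (Definition~\ref{df:GGal_classical}), this reverses to ${\rm PGal}_{\infty}(\mathscr D)={\rm GGal}(\mathscr D^{(\infty)})\subseteq{\rm GGal}(\tilde{\mathscr D}^{(\infty)})$. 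Taking diagonal parts, which preserves inclusions (Definition~\ref{def_diagonalpart}), yields ${\rm Gal}_{\infty}(\mathscr D)\subseteq{\rm GGal}(\tilde{\mathscr D}^{(\infty)})^{\rm diag}$.

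It remains to identify the right-hand side with $\Gamma({\rm Adj}(\tilde{\mathscr D}))$. By the geometric description of the Galois groupoid as the smallest rational subgroupoid tangent to $\tilde{\mathscr D}^{(\infty)}\boxtimes\tilde{\mathscr D}^{(\infty)}$ (Remark~\ref{rm:topGalois}), its diagonal part is tangent to the group connection ${\rm Adj}(\tilde{\mathscr D}^{(\infty)})$, so by Remark~\ref{rm:gaugesymmetries} all of its sections are gauge symmetries of $\tilde{\mathscr D}^{(\infty)}$; that is, ${\rm GGal}(\tilde{\mathscr D}^{(\infty)})^{\rm diag}\subseteq\Gamma({\rm Adj}(\tilde{\mathscr D}^{(\infty)}))$. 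Finally I would re-run the argument of Theorem~\ref{th:Gau_is_adj} for $\tilde{\mathscr D}$: a jet of a gauge transformation of $P$ preserves the $\tilde{\mathscr D}^{(\infty)}$-horizontal sections of $\Gamma(\mathscr D)$ if and only if it preserves the $\tilde{\mathscr D}$-horizontal sections of $P$, which identifies $\Gamma({\rm Adj}(\tilde{\mathscr D}^{(\infty)}))$ with $\Gamma({\rm Adj}(\tilde{\mathscr D}))\subseteq J({\rm Gau}(P)/B)$ and closes the chain ${\rm Gal}_{\infty}(\mathscr D)\subseteq\Gamma({\rm Adj}(\tilde{\mathscr D}))$.

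The main obstacle I expect is the first step: verifying that $\tilde{\mathscr D}^{(\infty)}$ is a genuine $\hat\tau_mG$-invariant flat $\tilde{\mathscr F}$-connection on $\Gamma(\mathscr D)$, so that the theory of Section~\ref{sec:2} applies verbatim and $\mathbb C(\Gamma(\mathscr D))^{\tilde{\mathscr D}^{(\infty)}}$ is $\hat\tau_mG$-invariant. The delicate point is the compatibility of isomonodromic transport with the moving-frame trivialization of Proposition~\ref{pr:principalGD}: unlike $\mathscr D^{(\infty)}$, the new directions of $\tilde{\mathscr D}^{(\infty)}$ project nontrivially to $S$, so one must check that the prolonged flow is equivariant for the $\hat\tau_mG$-action built from the frame on $S$ (cf.\ Remark~\ref{rm:isoS}), and that the prolonged distribution indeed lands inside $\Gamma(\mathscr D)$ and is integrable, which follows from the flatness of $\tilde{\mathscr D}$ and the compatibility of the Lie bracket with prolongation.
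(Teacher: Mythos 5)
The construction in your first paragraph is essentially sound, although the parenthetical justification is not: that the flow of $\tilde Y$ carries a $\mathscr D$-leaf inside the ambient $\tilde{\mathscr D}$-leaf does not make the image a $\mathscr D$-leaf. What one actually needs is that this flow normalizes $\mathscr D$: for $\rho$-projectable $Y$ tangent to $\tilde{\mathscr F}$ and $X$ tangent to $\mathscr F$ one has $\big[\tilde Y,\tilde X\big]=\widetilde{[Y,X]}$ with $[Y,X]$ again tangent to $\mathscr F$, hence the flow of $\tilde Y$ maps $\mathscr D$-horizontal sections to $\mathscr D$-horizontal sections and $\tilde Y^{(\infty)}$ is tangent to $\Gamma(\mathscr D)$. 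The genuine gap is the point you yourself flag as the main obstacle and then dismiss: the $\hat\tau_mG$-invariance of $\tilde{\mathscr D}^{(\infty)}$ does \emph{not} follow from flatness and bracket compatibility, and it is false in general. Unwinding the action of Proposition~\ref{pr:principalGD}, equivariance of the prolonged isomonodromic flow forces $\hat\varphi_{\bar\psi_t(s)}=\bar\psi_t\circ\hat\varphi_s$, where $\bar\psi_t$ denotes the flows on $S$ along $\rho_*\big(\tilde{\mathscr F}\big)$; that is, the moving frame must be flat along the isomonodromy foliation, and such a frame cannot in general be chosen rationally (for the Schlessinger foliation this would amount to a rational trivialization of a foliation that has no rational first integrals). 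Concretely, take $G=\mathbb C^*$, $S=\mathbb C^2_{s_1,s_2}$, $B=\mathbb C_x\times S$, $\mathscr D\colon u_x=s_2u$, the isomonodromic deformation $\tilde{\mathscr D}$ obtained by adding $u_{s_1}+s_1u_{s_2}=s_1xu$, and the translation frame on $S$. Then $w=u_{s_1}/u+s_1(u_{s_2}/u-x)$ is a rational first integral of $\tilde{\mathscr D}^{(1)}$, while the element $j_0(1+c\varepsilon_2)\in\hat\tau_2G$ sends $w$ to $w+cs_1$, which is not a first integral (otherwise $s_1$ would be one). So $\mathbb C(\Gamma(\mathscr D))^{\tilde{\mathscr D}^{(\infty)}}$ is not $\hat\tau_mG$-invariant, Proposition~\ref{Galois_correspondence_2} does not apply to it, and the groupoid ${\rm GGal}\big(\tilde{\mathscr D}^{(\infty)}\big)$ on which your second and third paragraphs are built is not defined. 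Nor can you fix this by passing to the largest $\hat\tau_mG$-stable subfield: in the example above that subfield is generated at order one by $s_2-s_1^2/2$ and $u_{s_2}/u-x$, and the diagonal part of its groupoid is $\{\sigma_{s_2}=0\}$, which is not contained in $\Gamma_1\big({\rm Adj}\big(\tilde{\mathscr D}\big)\big)=\{\sigma_x=0,\ \sigma_{s_1}+s_1\sigma_{s_2}=0\}$; the final inclusion of your chain then fails.

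This is exactly the difficulty the paper's proof is built to avoid: it never turns $\tilde{\mathscr D}$ into a connection on the principal bundle $\Gamma(\mathscr D)\to B$. Instead it works with the subvariety $\Gamma_k\big(\tilde{\mathscr D}\big)\subset\Gamma_k(\mathscr D)$ of jets of $\tilde{\mathscr D}$-horizontal sections. Since the $\mathscr D^{(k)}$-leaf through $j^k_xu$ is $\{j^k_{x'}u\}$ for $x'$ in the $\mathscr F$-leaf of $x$ (Remark~\ref{rm:geom_Dk}), the set $\Gamma_k\big(\tilde{\mathscr D}\big)$ is a union of $\mathscr D^{(k)}$-leaves; hence ${\rm Iso}\big(\Gamma_k\big(\tilde{\mathscr D}\big)\big)$ is a rational subgroupoid of ${\rm Iso}(\Gamma_k(\mathscr D))$ tangent to $\mathscr D^{(k)}\boxtimes\mathscr D^{(k)}$, and the minimality characterization of the Galois groupoid (Remark~\ref{rm:topGalois}) gives ${\rm GGal}\big(\mathscr D^{(k)}\big)\subset{\rm Iso}\big(\Gamma_k\big(\tilde{\mathscr D}\big)\big)$ directly, without ever invoking invariant fields of a prolonged deformation. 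Taking diagonal parts and identifying them as in Theorem~\ref{th:Gau_is_adj} then yields ${\rm Gal}_k(\mathscr D)\subset\Gamma_k\big({\rm Adj}\big(\tilde{\mathscr D}\big)\big)$ for every finite $k$. The frame-dependence that breaks your argument is harmless there, because the Galois groupoid lies in ${\rm Iso}_S(\Gamma_k(\mathscr D))$, which does not depend on the choice of frame (Remark~\ref{rm:isoS}). If you wish to keep the architecture of your proof, the repair is to replace ${\rm GGal}\big(\tilde{\mathscr D}^{(\infty)}\big)$ by the groupoid of gauge isomorphisms preserving $\Gamma\big(\tilde{\mathscr D}\big)$ --- which is precisely the paper's argument.
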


\begin{proof}It suffices to verify that ${\rm Gal}_k(\mathscr D)\subset {\Gamma}_k\big({\rm Adj}\big(\tilde{\mathscr D}\big)\big)$ for any finite~$k$. Any $\tilde{\mathscr D}$-horizonal section is, in particular, $\mathscr D$-horizontal. It follows that $\Gamma_k\big(\tilde{\mathscr D}\big) \subset \Gamma_k(\mathscr D)$.

Let us consider a moving frame in $S$, so that we may make use of the $\tau_{m,k}G$-structure in~$\Gamma(\mathscr D)$. From the construction of the prolongation $\mathscr D^{(k)}$ we have that for any $j_x^ku\in \Gamma_k\big(\tilde{\mathscr D}\big)$ there is an inclusion of vector spaces $\mathscr D^{(k)}_{j_x^ku}\subset \tilde{\mathscr D}_{j_x^ku}^{(k)}$. Therefore $\Gamma_k\big(\tilde{\mathscr D}\big)$ is tangent to the distribution $\mathscr D^{(k)}$. It is in fact a reduction of structure group to $\tau_{m-d,k}G$, where~$d$ is the codimension of $\mathscr D$ in~$\tilde{\mathscr D}$.

From there, we have that ${\rm Iso}(\Gamma_k\big(\tilde{\mathscr D}\big))$ is a subgroupoid of ${\rm Iso}(\Gamma_k(\mathscr D))$ and it is tangent to the distribution $\mathscr D^{(k)} \boxtimes \mathscr D^{(k)}$, as defined in Remark~\ref{rm:DxD}. From the topological construction of the Galois groupoid (see Remark~\ref{rm:topGalois}) we have ${\rm GGal}\big(\mathscr D^{(k)}\big) \subset {\rm Iso}\big(\Gamma_k\big(\tilde{\mathscr D}\big)\big)$. Now, by taking the diagonal parts
we have ${\rm Gal}_k(\mathscr D)\subset \Gamma_k\big({\rm Adj}\big(\tilde{\mathscr D}\big)\big)$.
\end{proof}

Theorems \ref{th:KC} and \ref{th:Mal} describe all \emph{Zariski dense} differential subgroups $H$ of $J({\rm Gau}(P)/B)$ for simple algebraic group $G$. There exists a unique partial $G$-invariant connection $\tilde{\mathscr D}$ such that $H = \Gamma\big({\rm Adj}\big(\tilde{\mathscr D}\big)\big)$. Note that
${\rm Gal}_{\infty}(\mathscr D)$ is Zariski dense in $J({\rm Gau}(P)/B)$ if and only if ${\rm Gal}(\mathscr D) = {\rm Gau}(P)$. We can state our main result.

\begin{Theorem}\label{th:main}
Let $P\to B$ be an affine principal bundle with simple group $G$, $\rho\colon B\to S$ a~dominant map, $\mathscr F = \ker({\rm d}\rho)$, and $\mathscr D$ a~principal $G$-invariant connection with parameters in $S$. Let us assume that the Galois group of $\mathscr D$ is~${\rm Gau}(P)$. Then, there is a biggest isomonodromic deformation $\tilde{\mathscr D}$ of $\mathscr D$ and the Galois group with parameters ${\rm Gal}_{\infty}(\mathscr D)$ is the group $\Gamma\big({\rm Adj}\big(\tilde{\mathscr D}\big)\big)$ of gauge symmetries of $\tilde{\mathscr D}$.
\end{Theorem}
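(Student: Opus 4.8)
The plan is to combine the density translation recorded just above the statement with the Kiso--Cassidy classification (Theorems~\ref{th:KC} and~\ref{th:Mal}), and to extract maximality from Lemma~\ref{lm:key}. The two structural inputs I would record first are: by Theorem~\ref{th:Gal_is_DAG}, ${\rm Gal}_\infty(\mathscr D)$ is a differential algebraic subgroup of $J({\rm Gau}(P)/B)$; and by Theorem~\ref{th:Gau_is_adj} it sits inside ${\rm Gau}(\Gamma(\mathscr D))\simeq\Gamma({\rm Adj}(\mathscr D))$, the group of gauge symmetries of $\mathscr D$ itself. The hypothesis ${\rm Gal}(\mathscr D)={\rm Gau}(P)$ is, by the equivalence noted before the statement, precisely the condition that ${\rm Gal}_\infty(\mathscr D)$ be Zariski dense in $J({\rm Gau}(P)/B)$.

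Next I would invoke the classification. Since $G$ is simple, Theorems~\ref{th:KC} and~\ref{th:Mal} assert that every Zariski-dense differential algebraic subgroup of $J({\rm Gau}(P)/B)$ is of the form $\Gamma({\rm Adj}(\tilde{\mathscr D}))$ for a unique $G$-invariant partial connection $\tilde{\mathscr D}$. Applying this to the Zariski-dense subgroup ${\rm Gal}_\infty(\mathscr D)$ yields a unique $\tilde{\mathscr D}$ with ${\rm Gal}_\infty(\mathscr D)=\Gamma({\rm Adj}(\tilde{\mathscr D}))$.

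To see that this $\tilde{\mathscr D}$ is an isomonodromic deformation, i.e.\ that it extends $\mathscr D$, I would use the inclusion $\Gamma({\rm Adj}(\tilde{\mathscr D}))={\rm Gal}_\infty(\mathscr D)\subset\Gamma({\rm Adj}(\mathscr D))$ from the first paragraph together with the fact that the assignment $\tilde{\mathscr D}\mapsto\Gamma({\rm Adj}(\tilde{\mathscr D}))$ reverses inclusions: enlarging the partial connection imposes more horizontality conditions on its gauge symmetries, hence produces a smaller group. Thus $\mathscr D\subset\tilde{\mathscr D}$, so $\tilde{\mathscr D}$ is a $G$-invariant connection extending $\mathscr D$, as required. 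For maximality, let $\tilde{\mathscr D}'$ be any isomonodromic deformation of $\mathscr D$; Lemma~\ref{lm:key} gives ${\rm Gal}_\infty(\mathscr D)\subset\Gamma({\rm Adj}(\tilde{\mathscr D}'))$, that is $\Gamma({\rm Adj}(\tilde{\mathscr D}))\subset\Gamma({\rm Adj}(\tilde{\mathscr D}'))$, and by the same order-reversing correspondence $\tilde{\mathscr D}'\subset\tilde{\mathscr D}$. Hence $\tilde{\mathscr D}$ is the biggest isomonodromic deformation and ${\rm Gal}_\infty(\mathscr D)=\Gamma({\rm Adj}(\tilde{\mathscr D}))$.

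The genuine content is external to this bookkeeping: the heavy lifting lives in the Kiso--Cassidy classification of Zariski-dense differential subgroups for simple $G$ and in Lemma~\ref{lm:key}. Granting those, the theorem is essentially a formal consequence. The two in-house points I anticipate needing care are the density translation — that Zariski density of the whole pro-algebraic group ${\rm Gal}_\infty(\mathscr D)$ is controlled solely by its order-zero part ${\rm Gal}(\mathscr D)$, so that the hypothesis ${\rm Gal}(\mathscr D)={\rm Gau}(P)$ really puts us in the regime where the classification applies — and verifying that the bijection furnished by the classification is genuinely order-reversing at the level of $G$-invariant partial connections, which is what both the ``extends'' and the ``biggest'' assertions rely upon.
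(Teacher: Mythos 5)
Your proof is correct and follows essentially the same route as the paper's: the density remark before the theorem plus the Kiso--Cassidy classification (Theorems~\ref{th:KC} and~\ref{th:Mal}) produce the unique $\tilde{\mathscr D}$ with ${\rm Gal}_\infty(\mathscr D)=\Gamma\big({\rm Adj}\big(\tilde{\mathscr D}\big)\big)$, and Lemma~\ref{lm:key} together with the order-reversing correspondence $\tilde{\mathscr D}\mapsto\Gamma\big({\rm Adj}\big(\tilde{\mathscr D}\big)\big)$ yields both the extension property and maximality. The only cosmetic difference is that you obtain the inclusion ${\rm Gal}_\infty(\mathscr D)\subset\Gamma({\rm Adj}(\mathscr D))$ from Theorem~\ref{th:Gau_is_adj}, whereas the paper gets it by applying Lemma~\ref{lm:key} (to $\mathscr D$ viewed as a deformation of itself); this changes nothing of substance.
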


\begin{proof}From Lemma \ref{lm:key} and Theorem~\ref{th:KC} it is clear that the Galois group with parameters is the group of gauge symmetries of a certain isomonodromic deformation $\tilde{\mathscr D}$ of $\mathscr D$. It suffices to check that any other isomonodromic deformation is extended by $\tilde{\mathscr D}$. Let $\mathscr D_1$ be an isomonodromic deformation of $\mathscr D$. By Lemma~\ref{lm:key} we have ${\rm Gal}_\infty(\mathscr D) = \Gamma\big(\rm Adj\big(\tilde{\mathscr D}\big)\big) \subset {\Gamma}({\rm Adj}(\mathscr D_1))$. From there, and Theorem~\ref{th:Mal} we have $\mathscr D_1 \subset \tilde{\mathscr D}$.
\end{proof}

\begin{Example}\label{ex:fuchsian}
Let us consider $B = \mathbb C^{k+1} \times \mathfrak{sl}_n(\mathbb C)^k$ with coordinates $x,a_1,\ldots,a_k,A_1,\ldots,A_k$; $S= \mathbb C^k\times \mathfrak{sl}_n(\mathbb C)^k$ with the obvious projection (forgetting the coordinate $x$), and $P = {\rm SL}_n(\mathbb C) \times B$. The general Fuchsian system with $k$ singularities and trace free matrices
\begin{gather}\label{eq:Fuchsian}
\frac{\partial U}{\partial x} = \left(\sum_{i=1}^n \frac{A_i}{x-a_i}\right)U
\end{gather}
is a ${\rm SL}_n(\mathbb C)$-invariant connection $\mathscr D$ on $P$ with parameters in $S$. The general isomonodromic deformation (see, for instance, \cite[Section~3.5]{iwasaki2013gauss}) admitted by~\eqref{eq:Fuchsian} is the well-known Schlessinger system. It consists in the system of equations formed by~\eqref{eq:Fuchsian} together with
\begin{gather}
\frac{\partial A_i}{\partial a_j} = \frac{[A_i,A_j]}{a_i-a_j}, \label{26}\\
\frac{\partial A_i}{\partial a_i} = -\sum_{i\neq j}\frac{[A_i,A_j]}{a_i-a_j}, \label{27} \\
\frac{\partial U}{\partial a_i} = \frac{A_i}{x-a_i}U. \label{28}
\end{gather}
Equations \eqref{26} and \eqref{27} define the foliation $\rho_*(\tilde{\mathscr F})$ of isomonodromic paths in $S$, and equa\-tions~\eqref{eq:Fuchsian}--\eqref{28} define the isomonodromic deformation $\tilde{\mathscr D}$ as a partial $\tilde{\mathscr F}$-connection. Thus, the Galois group with parameters of~\ref{eq:Fuchsian} is the differential algebraic subgroup of $J({\rm SL}_n(\mathbb C)/B)$ given by equations~\eqref{26},~\eqref{27}, and
\begin{gather*}
\frac{\partial \sigma}{\partial x} = \left[\sum_{i=1}^n \frac{A_i}{x-a_i},\sigma \right], \qquad
\frac{\partial \sigma}{\partial a_i} = \left[\frac{A_i}{x-a_i}, \sigma \right].
\end{gather*}
\end{Example}

\section{Gauss hypergeometric equation}\label{s:GHyp}

Let us consider Gauss hypergeometric equation
\begin{gather}\label{HG}
\quad x(1-x)\frac{{\rm d}^2u}{{\rm d}x^2} + \{\gamma -(\alpha+\beta+1) x \} \frac{{\rm d}u}{{\rm d}x} - \alpha\beta u=0.
\end{gather}

We see equation \eqref{HG} as a linear connection with parameters in the following way. We set $P = {\rm GL}_2(\mathbb C) \times B$, $B = \mathbb C_{x}\times S$ and $S = \mathbb C^{3}_{\alpha,\beta,\gamma}$ with the obvious projections. Then, the hypergeometric equation is the ${\rm GL}_2(\mathbb C)$-invariant partial connection $\mathcal H$ in $P$ defined by the differential system
\begin{gather}
{\rm d}\left(\begin{matrix}
u_{11} & u_{12} \\
u_{21} & u_{22}
\end{matrix} \right) -
\left(\begin{matrix}
0 & 1 \\
\dfrac{\alpha\beta}{x(1-x)} & \dfrac{(\alpha+\beta+1)x - \gamma}{x(1-x)}
\end{matrix} \right)
\left(\begin{matrix}
u_{11} & u_{12} \\
u_{21} & u_{22}
\end{matrix} \right) = 0, \nonumber\\
{\rm d}\alpha = 0, \qquad {\rm d}\beta = 0, \qquad {\rm d}\gamma = 0.\label{HGmatrix}
\end{gather}

\subsection{Projective reduction}
Let us set $P_{\rm proj} = {\rm PGL}_2(\mathbb C)\times B$. There is a canonical quotient map $P\to P_{\rm proj}$ which is a~morphism of principal bundles. The projection of $\mathcal H$ is a ${\rm PGL}_2(\mathbb C)$-invariant partial connection~$\mathcal H_{\rm proj}$ on~$P_{\rm proj}$. This connection has a nice geometric interpretation. Let us consider $J_2^*(\mathbb C^1,\mathbb P_1)$ the space of $2$-jets of local diffeomorphisms from $\mathbb C^1$ to $\mathbb P_1$. Let us choose an affine embedding~$\varphi$ of~$\mathbb C_1$ in $\mathbb P_1$, for instance $\varphi(\varepsilon) = [\varepsilon : 1]$. Let us consider the action of~${\rm PGL}_2(\mathbb C)$ in
$\mathbb P_1$ on the right side, by setting
\begin{gather*}[t_0:t_1]\star \left[\begin{matrix}
a & b \\ c & d
\end{matrix} \right] = [at_0 + ct_1: bt_0 + dt_1].\end{gather*}
We get an isomoporphism of ${\rm PGL}_2(\mathbb C)$ bundles
\begin{gather*}P_{\rm proj} \to J^*_2(\mathbb C, \mathbb P_1) \times \mathbb C^3_{\alpha,\beta,\gamma}, \qquad (\sigma,x,\alpha,\beta,\gamma) \mapsto \big(j^2_x (\varphi\star \sigma),\alpha,\beta,\gamma\big).\end{gather*}
Through such isomorphism, the reduced connection is seen as a third order differential equation in the projective space, the well known Schwartzian equation: if $\{u_1,u_2\}$ is a local basis of solutions of~\eqref{HG} then $y = \frac{u_1}{u_2}$ satisfies
\begin{gather*}%\label{Schw}
\quad {\rm Schw}_x(y) = \nu(\alpha,\beta,\gamma ; x),
\end{gather*}
with
\begin{gather*} \nu(\alpha,\beta,\gamma ;x) = \frac{\gamma(2-\gamma)}{4x^2} +\frac{1-(\gamma - \alpha - \beta)^2}{4(1-x^2)}+\frac{\gamma(1-\gamma+\alpha+\beta)-2\alpha\beta}{2 x(1-x)}.\end{gather*}

\begin{Lemma}\label{no_iso}For very general values of $(\alpha,\beta,\gamma)\in S$ there is no smooth isomonodromic path of $\mathcal H_{\rm proj}$ passing through $(\alpha,\beta,\gamma)$.
\end{Lemma}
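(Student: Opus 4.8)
The plan is to control the isomonodromy condition through the projective monodromy of the hypergeometric equation. Recall from the preceding discussion that $\mathcal H_{\mathrm{proj}}$ is, via the Schwarzian description, a Fuchsian projective connection on $\mathbb P_1$ with the three singular points $0$, $1$, $\infty$; its monodromy is therefore a representation of the free group $\pi_1(\mathbb P_1\setminus\{0,1,\infty\})$ into $\mathrm{PGL}_2(\mathbb C)$. As stated above, along an isomonodromic path this projective monodromy representation stays constant up to conjugation. First I would record that the projective conjugacy classes of the three local monodromies $M_0,M_1,M_\infty$ are governed by the exponent differences
\begin{gather*}
\theta_0 = 1-\gamma,\qquad \theta_1=\gamma-\alpha-\beta,\qquad \theta_\infty=\alpha-\beta,
\end{gather*}
read off from the local exponents of \eqref{HG} (equivalently, from the poles of $\nu(\alpha,\beta,\gamma;x)$).

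The core step is then short. Since a single conjugation in $\mathrm{PGL}_2(\mathbb C)$ conjugates the three fixed local monodromies $M_0,M_1,M_\infty$ simultaneously, constancy of the global representation up to conjugation forces each local conjugacy class to be constant along the path. For very general $(\alpha,\beta,\gamma)$ the $\theta_i$ avoid the resonant locus $\tfrac12\mathbb Z$, so each local monodromy is a regular semisimple element of $\mathrm{PGL}_2(\mathbb C)$ whose conjugacy class is detected by the invariant $\cos(2\pi\theta_i)$; along a smooth path the relevant branch is fixed, and constancy forces $\mathrm d\theta_0=\mathrm d\theta_1=\mathrm d\theta_\infty=0$. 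Because the linear map $(\alpha,\beta,\gamma)\mapsto(\theta_0,\theta_1,\theta_\infty)$ is invertible (its matrix has nonzero determinant), this yields $\mathrm d\alpha=\mathrm d\beta=\mathrm d\gamma=0$ along the path, so the induced foliation $\rho_*(\tilde{\mathscr F})$ in $S$ vanishes at a very general point and any smooth isomonodromic path through it is constant.

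I expect the main difficulty to be the careful handling of the genericity hypothesis. The phrase ``very general'' must be chosen to exclude exactly the loci where the argument degenerates: values for which the monodromy representation is reducible, or some local monodromy becomes scalar or unipotent (so that its $\mathrm{PGL}_2$-conjugacy class no longer determines $\theta_i$), together with the resonant set $\theta_i\in\tfrac12\mathbb Z$. One also has to treat the $\mathrm{PGL}_2$ sign ambiguity $\theta_i\mapsto-\theta_i$ coming from working with eigenvalue ratios rather than eigenvalues, which is harmless once the path is smooth and stays away from the resonant locus. The conceptual reason behind the vanishing is the rigidity of the three-punctured sphere (no accessory parameters), which guarantees that deforming the parameters genuinely moves the monodromy; I would cite Riemann's classical description of hypergeometric monodromy for this.
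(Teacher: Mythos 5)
Your proof is correct and follows essentially the same route as the paper: your invariants $\cos(2\pi\theta_0)$, $\cos(2\pi\theta_1)$, $\cos(2\pi\theta_\infty)$ are exactly (up to a factor of $2$) the paper's monodromy invariants $f$, $g$, $h$ built from the eigenvalues of the local monodromies, and your derivative-plus-invertible-linear-map step is precisely the ``functional independence at a very general point'' computation the paper invokes, with the same exclusion of the resonant locus where the sines vanish.
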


\begin{proof}Here we apply the description of the monodromy of~\eqref{HG} in terms of the parameters, as explained in~\cite{iwasaki2013gauss}. The eigenvalues of the monodromy matrices at $0$, $1$, $\infty$ are respectively $\big\{1,{\rm e}^{-2\gamma \pi{\rm i}}\big\}$, $\big\{1, {\rm e}^{2(\gamma-\alpha-\beta)\pi{\rm i}}\big\}$, $\big\{{\rm e}^{2\alpha\pi{\rm i}},{\rm e}^{2\beta \pi{\rm i}}\big\}$. The transcendental functions
\begin{gather*}f = {\rm e}^{2\gamma \pi{\rm i}} + {\rm e}^{-2\gamma\pi{\rm i}}, \qquad g = {\rm e}^{2(\gamma-\alpha-\beta)\pi{\rm i}}+ {\rm e}^{2(\alpha+\beta-\gamma)\pi{\rm i}}, \qquad
h = {\rm e}^{2(\alpha-\beta)\pi{\rm i}} + {\rm e}^{2(\beta-\alpha)\pi{\rm i}}\end{gather*}
are, by construction, invariants of the monodromy of $\mathcal H_{\rm proj}$. Thus, they are constant along any isomonodromic path. A simple calculus shows that they are functionally independent on the very general point of $S$. It yields the result.
\end{proof}

\begin{Corollary} The differential Galois group with parameters of $\mathcal H_{\rm proj}$ consists in all the gauge symmetries of $\mathcal H_{\rm proj}$, ${\rm Gal}_{\infty}(\mathcal H_{\rm proj}) = {\rm Gau}(\Gamma(\mathcal H_{\rm proj}))$.
\end{Corollary}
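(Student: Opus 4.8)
The plan is to deduce the statement from our main Theorem~\ref{th:main}, whose hypotheses are met here: the structural group ${\rm PGL}_2(\mathbb C)$ is simple, and $\rho\colon B\to S$ is the dominant projection forgetting $x$, with $\mathscr F = \ker({\rm d}\rho)$. The one nontrivial input I must secure before invoking Theorem~\ref{th:main} is that the \emph{ordinary} Galois group of $\mathcal H_{\rm proj}$ is the full gauge group, ${\rm Gal}(\mathcal H_{\rm proj}) = {\rm Gau}(P_{\rm proj})$. By the Galois correspondence (Proposition~\ref{Galois_correspondence_2} together with Remark~\ref{Galois_correspondence_3}), and since the diagonal part of ${\rm Iso}_S(P_{\rm proj})$ is exactly ${\rm Gau}(P_{\rm proj})$, this is equivalent to the absence of nontrivial rational first integrals, namely $\mathbb C(P_{\rm proj})^{\mathcal H_{\rm proj}} = \mathbb C(S)$.

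To establish this I would argue fibrewise. For a very general $s=(\alpha,\beta,\gamma)\in S$ the restriction $\mathcal H_{{\rm proj},s}$ is the classical projective hypergeometric connection over $\mathbb C(x)$; being Fuchsian on $\mathbb P_1$ with three singular points, its differential Galois group over $\mathbb C$ is the Zariski closure of its monodromy group, which for generic exponents is all of ${\rm PGL}_2(\mathbb C)$ \cite{iwasaki2013gauss}. Hence $\mathcal H_{{\rm proj},s}$ admits no nonconstant rational first integral, so any $h\in\mathbb C(P_{\rm proj})^{\mathcal H_{\rm proj}}$ is constant on each generic fibre and therefore lies in $\mathbb C(S)$. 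This yields $\mathbb C(P_{\rm proj})^{\mathcal H_{\rm proj}} = \mathbb C(S)$ and thus ${\rm Gal}(\mathcal H_{\rm proj}) = {\rm Gau}(P_{\rm proj})$.

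With this hypothesis in hand, Theorem~\ref{th:main} produces a biggest isomonodromic deformation $\tilde{\mathscr D}$ of $\mathcal H_{\rm proj}$ and identifies ${\rm Gal}_{\infty}(\mathcal H_{\rm proj}) = \Gamma\big({\rm Adj}\big(\tilde{\mathscr D}\big)\big)$. Here Lemma~\ref{no_iso} is decisive: since no smooth isomonodromic path of $\mathcal H_{\rm proj}$ passes through the very general point of $S$, the foliation $\rho_*\big(\tilde{\mathscr F}\big)$ of isomonodromic directions has rank $0$ at the generic point. As $\mathcal H_{\rm proj}$ already saturates the fibre direction $\mathscr F = \ker({\rm d}\rho)$, any genuine extension would add a direction transverse to the fibres of $\rho$, whose projection to $S$ would be a nonzero isomonodromic direction; having none, the biggest isomonodromic deformation is trivial, $\tilde{\mathscr D} = \mathcal H_{\rm proj}$. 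Combining this with Theorem~\ref{th:Gau_is_adj}, which identifies $\Gamma({\rm Adj}(\mathscr D))\simeq {\rm Gau}(\Gamma(\mathscr D))$, gives
\begin{gather*}
{\rm Gal}_{\infty}(\mathcal H_{\rm proj}) = \Gamma\big({\rm Adj}(\mathcal H_{\rm proj})\big) = {\rm Gau}(\Gamma(\mathcal H_{\rm proj})).
\end{gather*}

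The main obstacle I anticipate is the fibrewise density step: one must know that the classical projective monodromy of the hypergeometric equation is Zariski-dense in ${\rm PGL}_2(\mathbb C)$ for very general parameters, and must be careful that passing from the $\mathbb C$-constant classical statement to the statement over $\mathbb C(S)$ introduces no new rational invariants — which is precisely what the fibrewise argument guarantees. A secondary point requiring care is the implication ``no isomonodromic path $\Rightarrow$ no nontrivial isomonodromic deformation'', where one uses that an isomonodromic deformation extending $\mathscr D$ necessarily adds directions transverse to the fibres of $\rho$, whose projections to $S$ are exactly the isomonodromic directions forbidden by Lemma~\ref{no_iso}.
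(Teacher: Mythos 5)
Your proof is correct and follows the paper's own route: the paper's proof of this corollary is literally ``a direct consequence of Lemma~\ref{no_iso} and Theorem~\ref{th:main}'', which is exactly your skeleton. Your write-up is in fact more careful than the paper's, since you explicitly verify the hypothesis of Theorem~\ref{th:main} (that ${\rm Gal}(\mathcal H_{\rm proj}) = {\rm Gau}(P_{\rm proj})$, via fibrewise Zariski density of the projective hypergeometric monodromy and the Galois correspondence) and spell out both the step ``no isomonodromic path through a very general point $\Rightarrow$ the biggest isomonodromic deformation is $\mathcal H_{\rm proj}$ itself'' and the final identification $\Gamma({\rm Adj}(\mathcal H_{\rm proj}))\simeq{\rm Gau}(\Gamma(\mathcal H_{\rm proj}))$ from Theorem~\ref{th:Gau_is_adj}, all of which the paper leaves implicit.
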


\begin{proof}It is a direct consequence of Lemma~\ref{no_iso} and Theorem~\ref{th:main}.
\end{proof}

\subsection{Differential equation for the determinant}

Another reduction of \eqref{HG} is given by the quotient map $P\to P_{\det} = \mathbb C^*\times B$, $(U,x,\alpha,\beta,\gamma)\mapsto(w = {\det}(U),x,\alpha,\beta,\gamma)$. The projection is a morphism of principal bundles and it sends $\mathcal H$ to a~partial connection $\mathcal H_{\det}$ in $P_{\det}$ defined by the differential system
\begin{gather*}%\label{det}
{\rm d}w -\frac{(\alpha+\beta+1)x - \gamma}{x(1-x)}w\,{\rm d}x = 0,\qquad {\rm d}\alpha = 0, \qquad {\rm d}\beta = 0, \qquad {\rm d}\gamma = 0.
\end{gather*}
It is convenient to consider an affine change of variables in the parameter space, by setting
\begin{gather*}
a = -\gamma,\qquad b = \alpha + \beta - \gamma +1, \qquad c = \alpha.
\end{gather*}
The equation for horizontal sections of $\mathcal H_{\det}$ is now written as
\begin{gather*}%\label{det2}
\frac{{\rm d}w}{{\rm d}x} = \left(\frac{a}{x} - \frac{b}{1-x}\right)w.
\end{gather*}
And its general solution is
\begin{gather*}w = f(a,b,c)x^a(1-x)^b.\end{gather*}
Let us compute the Galois group with parameters of such equation. Note that
\begin{gather*}\mathbb C(\Gamma(\mathcal H_{\det})) = \mathbb C(x,a,b,c,w,w_a,w_b,w_c,w_{aa},\ldots) = \mathbb C(x,a,b,c)\langle w \rangle.\end{gather*}
Parameter $c$ does not apear on the equation. This means that
in the $\mathbb C(\Gamma(\mathcal H_{\det}))$ the differential fuction $w_c$ is an invariant. Thus,
\begin{gather*}\mathbb C\langle w_c \rangle \subset \mathbb C(\Gamma({\mathcal H}_{\det}))^{{\mathcal H}_{\det}^{\infty}}.\end{gather*}
For the rest of the computation let us ignore the derivatives w.r.t.\ the parameter $c$ and let us consider the equation as dependent only of parameters $a$ and $b$. This is equivalent to projecting the equation onto a quotient basis~$B'$. Let us consider the equation for the second prolongation~${\mathcal H}_{\det}^{(2)}$. It is
\begin{gather*}
w_x = \left(\frac{a}{x} - \frac{b}{1-x}\right)w, \\
w_{ax} = \frac{1}{x}w + \left(\frac{a}{x} - \frac{b}{1-x}\right)w_a, \\
w_{bx} = \frac{1}{1-x}w + \left(\frac{a}{x} - \frac{b}{1-x}\right)w_b, \\
w_{aax} = \frac{2}{x}w_a + \left(\frac{a}{x} - \frac{b}{1-x}\right)w_{aa},\\
w_{abx} = \frac{1}{1-x}w_a + \frac{1}{x}w_b + \left(\frac{a}{x} - \frac{b}{1-x}\right)w_{ab}, \\
w_{bbx} = \frac{2}{1-x}w_b + \left(\frac{a}{x} - \frac{b}{1-x}\right)w_{bb}.
\end{gather*}

At this level, they appear three rational first integrals of ${\mathcal H}_{\det}^{(2)}$, namely,
\begin{gather*}R = \frac{w_{aa}}{w} - \frac{w_a^2}{w^2}, \qquad S = \frac{w_{ab}}{w} - \frac{w_aw_b}{w^2}, \qquad T = \frac{w_{bb}}{w} - \frac{w_b^2}{w^2}.\end{gather*}

Thus,
$\mathbb C(\Gamma({\mathcal H}_{\det}))^{{\mathcal H}_{\det}^{\infty}}$ contains the $\mathbb C^*$-invariant~$\mathfrak X(B)^{\rm tot}$-differential field spanned by~$w_c$,~$R$,~$S$ and~$T$. Let us compute the differential $B$-group associated to this field.

The gauge group $J({\rm Gau}(P/B))$ is in this case isomorphic $J(B,\mathbb C^*)$. On the other hand, we have that the adjoint equation in dimension $1$ vanish, so we also have $\sigma_x = 0$. From the rational invariants we have that the equations for the Galois group ${\rm Gal}_{\infty}(\mathcal H_{\det})$ should include
\begin{gather*}
\sigma_x = 0, \qquad \sigma_c = 0, \qquad \sigma_{aa} = \frac{\sigma_a^2}{\sigma}, \qquad \sigma_{ab} = \frac{\sigma_a\sigma_b}{\sigma}, \qquad \sigma_{bb} = \frac{\sigma_b^2}{\sigma}.
\end{gather*}
The three last equations are equivalent to say that $\sigma$ has constant logaritmic derivative with respect to $a$ and $b$. So, these equations define a~differential algebraic group, which is in fact a~group bundle with fibers of dimension~$3$ over~$B$,
\begin{gather*}\mathcal G = \{j_p\sigma \colon \sigma_x = \sigma_c = \partial_a(\sigma_a/\sigma) = \partial_b(\sigma_a/\sigma) = \partial_b(\sigma_b/\sigma) = 0\}.\end{gather*}
Let us observe that
\begin{gather*}\mathcal G \simeq \cdots \simeq \mathcal G_{k} \simeq \cdots \simeq \mathcal G_3 \simeq \mathcal G_{2} \simeq \mathcal G_1\to B.\end{gather*}

Let us see that the Galois group is exactly this group.

\begin{Lemma}\label{Lemma_3}Let us fix $a$ and $b$ complex numbers $\mathbb Q$-linearly independent. Let us consider the differential system with coefficients in $\mathbb C(x)$:
\begin{gather*}
\frac{{\rm d}}{{\rm d}x}\left(
\begin{matrix}
w_0 \\ w_1 \\ w_2
\end{matrix}\right) =
\left(
\begin{matrix}
\dfrac{a}{x}-\dfrac{b}{1-x} & 0 &0 \vspace{1mm}\\
\dfrac{1}{x} & \dfrac{a}{x}-\dfrac{b}{1-x} & 0\vspace{1mm}\\
\dfrac{-1}{1-x} & 0 & \dfrac{a}{x}-\dfrac{b}{1-x}
\end{matrix}\right)
\left(
\begin{matrix}
w_0 \\ w_1 \\ w_2
\end{matrix}\right).
\end{gather*}
Its Galois group is a connected group of dimension $3$.
\end{Lemma}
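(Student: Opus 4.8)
The plan is to \emph{integrate the system explicitly}: its matrix is lower triangular with equal diagonal entries $p(x)=\frac{a}{x}-\frac{b}{1-x}$, so the Picard--Vessiot extension and its Galois group can be written down by hand, and the whole statement reduces to a transcendence-degree computation.

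First I would set $W=x^{a}(1-x)^{b}$, a solution of the scalar equation $W'/W=p(x)$ governing the diagonal. The two off-diagonal equations $w_1'=\frac1x w_0+p\,w_1$ and $w_2'=\frac{-1}{1-x}w_0+p\,w_2$ are then solved by variation of constants: writing $w_i=W v_i$ gives $v_1'=1/x$ and $v_2'=-1/(1-x)$, i.e.\ $v_1=\log x$, $v_2=\log(1-x)$. This yields the fundamental matrix
\begin{gather*}
\Phi=\left(\begin{matrix} W & 0 & 0 \\ W\log x & W & 0 \\ W\log(1-x) & 0 & W\end{matrix}\right),
\end{gather*}
so the Picard--Vessiot field is $L=\mathbb{C}(x)\big(W,\log x,\log(1-x)\big)$, with field of constants $\mathbb{C}$.

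Next I would identify the Galois group. Any differential automorphism $\sigma$ fixes $p$ and the derivatives $1/x$, $-1/(1-x)$, hence $\sigma(W)=cW$ with $c\in\mathbb{C}^{*}$ and $\sigma(\log x)=\log x+d_1$, $\sigma(\log(1-x))=\log(1-x)+d_2$ with $d_1,d_2\in\mathbb{C}$. Writing $\sigma(\Phi)=\Phi M$ shows that the Galois group embeds as a closed subgroup of
\begin{gather*}
H=\left\{\left(\begin{matrix} c & 0 & 0 \\ cd_1 & c & 0 \\ cd_2 & 0 & c\end{matrix}\right): c\in\mathbb{C}^{*},\ d_1,d_2\in\mathbb{C}\right\}\cong \mathbb{G}_m\times\mathbb{G}_a\times\mathbb{G}_a,
\end{gather*}
which is connected of dimension $3$. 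Since a closed subgroup of the connected $3$-dimensional group $H$ having dimension $3$ must be all of $H$, it then suffices to prove $\dim\mathrm{Gal}=3$, i.e.\ that $W,\log x,\log(1-x)$ are algebraically independent over $\mathbb{C}(x)$ (transcendence degree equals dimension).

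I would prove this independence using residues. The two logarithms are independent because a relation $c_1\log x+c_2\log(1-x)\in\mathbb{C}(x)$ would, upon differentiation, express $c_1/x-c_2/(1-x)$ as the derivative of a rational function, which has vanishing residues; comparing residues at $0$ and $1$ forces $c_1=c_2=0$. The main obstacle is to rule out that $W$ becomes algebraic once the logarithms are adjoined. If $W$ were algebraic over $L_0=\mathbb{C}(x)\big(\log x,\log(1-x)\big)$, then, since $W'/W\in\mathbb{C}(x)$, some power $W^{N}$ ($N\ge 1$) would lie in $L_0$; writing $W^{N}=P/Q$ with $P,Q$ coprime polynomials in $\log x,\log(1-x)$ over $\mathbb{C}(x)$ and comparing top-degree terms in the logarithms (differentiation cannot raise this degree), one finds that $P'/P$ and $Q'/Q$ are logarithmic derivatives of rational functions, hence have integer residues. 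Thus $Np=(W^N)'/W^N=P'/P-Q'/Q$ has integer residues at $0$ and $1$, giving $Na,Nb\in\mathbb{Z}$ and contradicting the $\mathbb{Q}$-linear independence of $a$ and $b$ (which forces at least one of them irrational). Hence $W$ is transcendental over $L_0$, so $\mathrm{trdeg}_{\mathbb{C}(x)}L=3$ and $\mathrm{Gal}=H$ is connected of dimension~$3$.
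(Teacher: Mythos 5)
Your proof is correct, and it reaches the conclusion by a genuinely different route than the paper. The paper's proof is transcendental: from the same fundamental matrix $\Phi$ it passes to the monodromy representation and, the system being Fuchsian, invokes Schlesinger's density theorem to identify the Galois group with the Zariski closure of the group generated by the local monodromies at $0$ and $1$; for $a$, $b$ rationally independent this closure is all of $H\cong \mathbb{G}_{m}\times\mathbb{G}_{a}\times\mathbb{G}_{a}$, hence connected of dimension~$3$. You instead stay entirely inside differential algebra: explicit Picard--Vessiot field $L=\mathbb{C}(x)\big(W,\log x,\log(1-x)\big)$, embedding of the Galois group into the connected three-dimensional group $H$, and reduction to the transcendence-degree count $\mathrm{trdeg}_{\mathbb{C}(x)}L=3$, established by residue computations. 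What your route buys: it avoids analytic continuation and the density theorem altogether, and it isolates exactly where the arithmetic hypothesis enters (integer residues of logarithmic derivatives of rational functions force $Na,Nb\in\mathbb{Z}$); in fact your argument shows that the weaker hypothesis that $a$ and $b$ are not both rational already suffices, a refinement invisible in the paper's two-sentence proof. What the paper's route buys is brevity, given the standard density theorem. One step you should make explicit: the passage from ``no nontrivial $\mathbb{C}$-linear combination of $\log x$ and $\log(1-x)$ lies in $\mathbb{C}(x)$'' to their \emph{algebraic} independence over $\mathbb{C}(x)$ is not formal; it is Ostrowski's theorem (the additive half of the Kolchin--Ostrowski theorem), or else it requires the same coprime-fraction/top-degree argument you carry out for $W^{N}$. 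You genuinely need algebraic, not merely linear, independence of the logarithms, because your treatment of $W$ writes elements of $L_{0}$ as coprime polynomial fractions in them. Citing Kolchin--Ostrowski (which in one stroke also gives the transcendence of $W$ over $L_{0}$), or running your top-degree argument once more for the logarithms, closes this point and completes the proof.
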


\begin{proof}A fundamental matrix of solutions
\begin{gather*}\left(
\begin{matrix}
x^a(1-x)^b & 0 & 0 \\
\log(x)x^a(1-x)^b & x^a(1-x)^b & 0 \\
\log(1-x)x^a(1-x)^b & 0 & x^a(1-x)^b
\end{matrix}
\right).
\end{gather*}
We may compute its monodromy group. For rationally independent $a$ and $b$ its Zariski closure is connected group of dimension~$3$.
\end{proof}

We have that ${\rm Gal}_1\big(\mathcal H_{\det}^{(1)}\big)_{(x,a,b,c)}$ must contain the monodromy at $(x,a,b,c)$ of the connection~$P_{\det}$.
By Lemma \ref{Lemma_3} we have that the Zariski closure of the monodromy on $(x,a,b,c)$ contains~$\mathcal G_{1,(x,a,b,c)}$. Therefore we have
\begin{gather*}{\rm Gal}_1\big(\mathcal H_{\det}^{(1)}\big)_{(x,a,b,c)} \subset \mathcal G_{1,(x,a,b,c)} \subset {\rm Gal}_1\big(\mathcal H_{\det}^{(1)}\big)_{(x,a,b,c)}.\end{gather*}
Thus, we have the equality. The equations of the Galois group can be easily integrated, so be obtain a description of the functions whose jets are elements of
${\rm Gal}_{\infty}(\mathcal H_{\det})$.

\begin{Proposition}\label{pr:abc} The differential Galois group with parameters of $\mathcal H_{\det}$ is
\begin{gather*}{\rm Gal}_{\infty}(\mathcal H_{\det}) = \big\{j_{(x_0,\alpha_0,\beta_0,\gamma_0)}\sigma \colon
\sigma(x,\alpha,\beta,\gamma) = \exp (\mu_0 + \mu_1 \gamma + \mu_2 (\alpha + \beta) ),
\, \mu_0,\mu_1,\mu_2\in\mathbb C\big\},\end{gather*}
which is the set of differential zeroes of the Hopf differential ideal
\begin{gather*}\left\{\partial_x\sigma, \partial_c \sigma, \partial_a\frac{\partial_a\sigma}{\sigma},
\partial_a\frac{\partial_b \sigma}{\sigma}, \partial_b \frac{\partial_b \sigma}{\sigma} \right\} \subset \mathcal O_{J(\mathbb C^*/B)},\end{gather*}
where
\begin{gather*}
\partial_a = -\partial_\beta-\partial_\gamma, \qquad \partial_b = \partial_\beta, \qquad \partial_c = \partial_\alpha - \partial_\beta.
\end{gather*}
\end{Proposition}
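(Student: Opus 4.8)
The strategy is to start from the group bundle $\mathcal G$ that the discussion preceding the statement has already identified with ${\rm Gal}_\infty(\mathcal H_{\det})$, integrate its defining differential equations to obtain the explicit exponential description, and then transport everything back to the original parameters $\alpha,\beta,\gamma$. The identification ${\rm Gal}_\infty(\mathcal H_{\det})=\mathcal G$ is essentially in hand: the invariants $R$, $S$, $T$, $w_c$ show ${\rm Gal}_\infty(\mathcal H_{\det})\subseteq\mathcal G$ by the (order-reversing) Galois correspondence, while Lemma~\ref{Lemma_3} together with the monodromy argument forces ${\rm Gal}_1\big(\mathcal H_{\det}^{(1)}\big)=\mathcal G_1$; since the projection $\mathcal G\to\mathcal G_1$ is an isomorphism and ${\rm Gal}_\infty(\mathcal H_{\det})$ surjects onto ${\rm Gal}_1\big(\mathcal H_{\det}^{(1)}\big)$, the inclusion into $\mathcal G$ must be an equality. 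Thus only the integration and the change of variables remain.

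First I would integrate the system $\sigma_x=\sigma_c=0$, $\partial_a(\sigma_a/\sigma)=\partial_b(\sigma_a/\sigma)=\partial_b(\sigma_b/\sigma)=0$. Writing $p=\sigma_a/\sigma$ and $q=\sigma_b/\sigma$ for the logarithmic derivatives, the three last equations say that $p$ is independent of $a$ and $b$ and that $q$ is independent of $b$; the apparently missing condition $\partial_a(\sigma_b/\sigma)=0$ holds automatically, because $\partial_a(\sigma_b/\sigma)=\partial_b(\sigma_a/\sigma)$ by commutativity of the total derivatives, so $q$ is independent of $a$ as well. From $\sigma_x=\sigma_c=0$ one gets, after commuting $\partial_a$ and $\partial_b$ past $\partial_x$ and $\partial_c$, that $p$ and $q$ do not depend on $x$ or $c$ either; hence both are complex constants, say $\mu_1$, $\mu_2$. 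Integrating $\sigma_a/\sigma=\mu_1$, $\sigma_b/\sigma=\mu_2$ together with $\sigma_x=\sigma_c=0$ yields $\sigma=\exp(\mu_0+\mu_1a+\mu_2b)$ with $\mu_0,\mu_1,\mu_2\in\mathbb C$, and conversely every such $\sigma$ solves all five equations, so these exponentials are exactly the differential zeroes.

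Next I would undo the affine substitution. Inverting $a=-\gamma$, $b=\alpha+\beta-\gamma+1$, $c=\alpha$ gives $\alpha=c$, $\beta=b-a-c-1$, $\gamma=-a$; substituting into the exponent $\mu_1a+\mu_2b$ and relabelling the three free constants produces an exponent of the form $\mu_0+\mu_1\gamma+\mu_2(\alpha+\beta)$, which is the stated description of ${\rm Gal}_\infty(\mathcal H_{\det})$. Applying the chain rule to the same inversion gives $\partial_a=-\partial_\beta-\partial_\gamma$, $\partial_b=\partial_\beta$ and $\partial_c=\partial_\alpha-\partial_\beta$, the translation formulas of the statement; rewriting the five generators through them produces the displayed ideal, where I would also note that its middle generator $\partial_a(\partial_b\sigma/\sigma)$ coincides with $\partial_b(\sigma_a/\sigma)$ by the same commutativity.

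Finally, the Hopf structure needs only a remark: the family $\{\exp(\mu_0+\mu_1\gamma+\mu_2(\alpha+\beta))\}$ is closed under the fibrewise multiplication of $J(\mathbb C^*/B)$, since products of such exponentials add exponents and the identity and inverses are again of this form, so the ideal generated by the five differential polynomials is a Hopf ideal; it is a differential ideal because $\mathcal G$ is already known to be a differential algebraic subgroup. I expect no serious obstacle here, the substantive work having preceded the statement; the only points demanding care are verifying that the integration admits no solutions beyond the affine exponents and that the linear relabelling of constants forced by the change of variables is a bijection, so that the two descriptions of the set genuinely coincide.
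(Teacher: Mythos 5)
Your proposal is correct and takes essentially the same route as the paper: the paper likewise obtains ${\rm Gal}_{\infty}(\mathcal H_{\det})=\mathcal G$ from the invariants $w_c$, $R$, $S$, $T$ (giving one inclusion), Lemma~\ref{Lemma_3} together with the monodromy argument (giving ${\rm Gal}_1\big(\mathcal H_{\det}^{(1)}\big)=\mathcal G_1$), and the isomorphisms $\mathcal G\simeq\mathcal G_1$, and then simply asserts that the equations ``can be easily integrated'' to yield the stated exponential description. Your explicit integration via the logarithmic derivatives, the chain-rule translation between $(a,b,c)$ and $(\alpha,\beta,\gamma)$, and the Hopf-ideal remark just carry out the details the paper leaves to the reader.
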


\subsection{Computation of the Galois group}

We have a $2$-covering group morphism
\begin{gather*}\varphi \colon \ {\rm GL}_2(\mathbb C) \to {\rm PGL}_2(\mathbb C)\times \mathbb C^*,
\qquad A \mapsto ([A], \det(A)).\end{gather*}
Its kernel of is $\left\{-1,1\right\}$. Let us set $G_{\rm red} = {\rm GL}_2(\mathbb C)/\{-1,1\}$, $P_{\rm red} = P/\{-1,1\}$. So $P_{\rm red}$ is $G_{\rm red}$-principal bundle. The hypergeometric distribution $\mathcal H$ induces $\mathcal H_{\rm red}$ in $P_{\rm red}$.

We have an isomorphism
\begin{gather*}\psi\colon \ P_{\rm red} \xrightarrow{\sim} P_{\rm proj} \times_B P_{\det},\end{gather*}
that yield us a decoupling $\psi_{*}({\mathcal H}_{\rm red}) = \mathcal H_{\rm proj} \times_B \mathcal H_{\det}$. From the commutation of direct products and jets, we have that ${\rm Gal}_\infty(\mathcal H_{\rm red})$ is a differential algebraic subgroup of ${\rm Gal}_{\infty}(\mathcal H_{\rm proj})\times_B {\rm Gal}_\infty(\mathcal H_{\det})$ that projects surjectively onto both components. We will make use of the following.

\begin{Lemma}\label{lm:simple_dsimple}Let $G$ be a simple algebraic group and $B$ an algebraic variety. $J(G/B)$~is simple as differential algebraic group.
\end{Lemma}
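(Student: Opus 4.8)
The plan is to reduce the statement to an elementary computation with the Lie algebra of $J(G/B)$, where the simplicity of $G$ enters through the irreducibility of the adjoint representation. Write $K=\mathbb C(B)$. First I would identify the Lie algebra of the differential algebraic group $J(G/B)$: since the group law on $J(G/B)$ is jet-wise multiplication in $G$, its Lie algebra is the differential algebraic Lie algebra $\mathfrak g\otimes_{\mathbb C} K$ (with $\mathfrak g=\mathrm{Lie}(G)$), namely the $K$-bilinear extension of the bracket of $\mathfrak g$, equipped with the total derivative operators acting on the coefficients. Using the Lie correspondence for connected differential algebraic groups (Appendix~\ref{ap:DAG}, following \cite{le2010algebraic,cassidy1989classification}), a proper normal differential algebraic subgroup $N$ of $J(G/B)$ of positive dimension corresponds to a nonzero proper differential algebraic ideal $\mathfrak a$ of $\mathfrak g\otimes_{\mathbb C} K$, and the whole point is to show no such $\mathfrak a$ exists.

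The heart of the argument is this non-existence. The ideal $\mathfrak a$ is in particular a $\mathbb C$-subspace closed under $\mathrm{ad}(\mathfrak g\otimes 1)$, hence a $\mathfrak g$-submodule of $\mathfrak g\otimes_{\mathbb C} K$ for the adjoint action on the left factor. Because $G$ is simple, the adjoint representation is irreducible with $\mathrm{End}_{\mathfrak g}(\mathfrak g)=\mathbb C$; thus $\mathfrak g\otimes_{\mathbb C} K$ is $\mathfrak g$-isotypic, and its $\mathfrak g$-submodules are exactly the subspaces $\mathfrak g\otimes_{\mathbb C} W$ for $\mathbb C$-subspaces $W\subseteq K$. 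So $\mathfrak a=\mathfrak g\otimes_{\mathbb C} W$. Now I invoke that $\mathfrak a$ is a \emph{full} ideal: for every $f\in K$ one has $[\mathfrak g\otimes W,\,\mathfrak g\otimes f]=[\mathfrak g,\mathfrak g]\otimes (Wf)=\mathfrak g\otimes (Wf)$, using $[\mathfrak g,\mathfrak g]=\mathfrak g$. Containment in $\mathfrak a$ forces $Wf\subseteq W$, so $W$ is an ideal of the field $K$; since any nonzero $w_0\in W$ gives $w_0K=K\subseteq W$, we conclude $W=0$ or $W=K$. Hence $\mathfrak a$ is $0$ or all of $\mathfrak g\otimes_{\mathbb C} K$. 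Note that normality alone already forces the $K$-module structure, so I would not even need to use that $\mathfrak a$ is closed under the derivations.

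Finally I would lift this back to the group. Since $G$ is connected, $J(G/B)$ is connected as a differential algebraic group, so a proper normal differential algebraic subgroup $N$ with $\mathrm{Lie}(N)=\mathfrak g\otimes_{\mathbb C} K$ would have to equal $J(G/B)$; by the previous paragraph the only remaining possibility is $\mathrm{Lie}(N)=0$, i.e.\ $N$ is zero-dimensional. A zero-dimensional normal subgroup of a connected group is central, hence contained in $J(Z(G)/B)$, which is finite for $G$ simple (and trivial in the adjoint case $G=\mathrm{PGL}_2$ that we actually need). This is precisely the assertion that $J(G/B)$ is simple as a differential algebraic group.

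The main obstacle is not the algebra above, which is short, but making the infinitesimal-to-global step rigorous in the present pro-algebraic, bundle-over-$B$ setting. Concretely, I must justify the Lie correspondence and the identification of normal differential algebraic subgroups with ideals for connected differential algebraic groups defined over $K=\mathbb C(B)$ rather than over a differentially closed field, and confirm that $\mathrm{Lie}(J(G/B))$ is genuinely $\mathfrak g\otimes_{\mathbb C} K$ with the stated bracket and differential structure. These are exactly the facts recorded in Appendix~\ref{ap:DAG} and in \cite{le2010algebraic,cassidy1989classification}, which I would cite rather than reprove.
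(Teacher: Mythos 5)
Your proposal takes a genuinely different route from the paper: the paper never linearizes at the identity. Its proof truncates to order $0$, where $H_0$ is an honest normal algebraic subbundle of $G\times B$, hence (for $G$ simple, in fact centerless) trivial or everything; the trivial case forces $H$ to be the identity because the defining ideal is differential; and the Zariski dense case is handled by Kiso--Cassidy (Theorem~\ref{th:KC}), writing $H=\Gamma(\mathscr C)$ for a group $\mathscr F$-connection, followed by a concrete order-$1$ computation with the semidirect product $J_1(G/B)_x\simeq \mathrm{Lin}_{\mathbb C}(T_xB,\mathrm{Lie}(G))\ltimes G$ showing that $\Gamma(\mathscr C)$ is normal only when $\mathscr F$ is the foliation by points. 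Your tensor/isotypic computation is correct as pure algebra, and it is pleasant that both arguments ultimately turn on the nontriviality of the adjoint representation; but your proof has a genuine gap, and it sits exactly in the three facts you propose to cite rather than prove.

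Concretely: in the framework of Appendix~\ref{ap:DAG} (Section~\ref{ss:DAL}) the Lie algebra of $J(G/B)$ is \emph{not} $\mathfrak g\otimes_{\mathbb C}\mathbb C(B)$; it is the full jet bundle $J(\mathfrak g/B)$, and the Lie algebra of a differential algebraic subgroup is a differential Lie subalgebra of $J(\mathfrak g/B)$, i.e., the solution variety of a system of linear PDEs. Such an object is a vector space over the constants, not a $\mathbb C(B)$-subspace, and it is not faithfully captured by its $\mathbb C(B)$-points (a nonzero differential Lie subalgebra, e.g.\ the horizontal sections of a linear connection, may have no nonzero rational solutions at all, so $\mathrm{Lie}(N)(K)=0$ does not make $N$ small). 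Your identification $\mathrm{Lie}(J(G/B))=\mathfrak g\otimes \mathcal U$ only becomes correct after passing to points in a differentially closed extension $\mathcal U\supseteq \mathbb C(B)$ --- precisely the device this geometric theory is built to avoid, and nothing in Appendix~\ref{ap:DAG} supplies it. Moreover, even over $\mathcal U$, the correspondence statements you black-box --- (i) $N\lhd J(G/B)$ implies $\mathrm{Lie}(N)$ is an $\mathrm{ad}$-stable ideal, (ii) $\mathrm{Lie}(N)=0$ implies $N$ finite hence central, and above all (iii) $\mathrm{Lie}(N)$ full implies $N=J(G/B)$ --- are recorded neither in Appendix~\ref{ap:DAG} nor as off-the-shelf citable statements in \cite{le2010algebraic} or \cite{cassidy1989classification}; faithfulness of linearization for differential algebraic groups is substantive Kolchin--Cassidy theory, and the natural proof of (iii) in the present setting is to observe that a Zariski dense subgroup is, by Theorem~\ref{th:KC}, of the form $\Gamma(\mathscr C)$ and then to exclude nontrivial connections --- at which point you have reproduced the paper's argument rather than bypassed it. So the proposal mislocates the difficulty: the isotypic decomposition is the easy part, and what remains unproved is the infinitesimal-to-global infrastructure. (One point in your favor: your last paragraph correctly flags that for $G$ with nontrivial center one only gets simplicity modulo $Z(G)\times B$; the paper's dichotomy $H_0=G\times B$ or $H_0=\{\mathrm{Id}\}\times B$ silently assumes $G$ adjoint, which is harmless since the lemma is applied to $\mathrm{PGL}_2(\mathbb C)$.)
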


\begin{proof}Let $H\lhd J(G/B)$ be a differential algebraic subgroup. Then $H_0$ is rational group subbundle of $G\times B\to B$, therefore either $H_0 = G\times B$ or $H_0 = \{{\rm Id}\}\times B$. In the latter case $H$ is the identity subgroup bundle $J(G/B)$. In the former case, by Theorem \ref{th:KC} we have $H = \Gamma(\mathscr C)$ for a singular foliation $\mathscr F$ on $B$ and a group $\mathscr F$-connection $\mathscr C$.

Let us recall the classical decomposition of the tangent group $TG \simeq {\rm Lie}(G) \ltimes G$. Let us fix~$x$ a~regular point of $\mathscr F$. The $1$-jet at $x$ of a~function from $B$ to $G$ can be identified with its derivative which is a linear map from $T_xB$ to a fiber of $TG\to G$. Therefore we have semidirect product decomposition
\begin{gather*}J_1(G/B)_x\simeq {\rm Lin}_{\mathbb C}(T_xB,{\rm Lie}(G))\ltimes G.\end{gather*}
On the other hand, let us consider $\mathscr L$ the leaf of $\mathscr F$ that passes through $x$. The same argument applies, so we have
\begin{gather*}J_1(G/{\mathscr L})_x\simeq {\rm Lin}_{\mathbb C}(\mathscr F_x,{\rm Lie}(G))\ltimes G.\end{gather*}
The value of the group $\mathscr F$-connection $\mathscr C$ at $x$ can be seen as a section $\mathscr C_x$ from $G$ to $J_1(G/{\mathscr L})_x$ that assigns to any initial condition $g$ in $G$ the only $\mathscr C$-horizontal leaf that passes through $(x,g)$. In the following diagram, where $\pi_x$ is the restriction to the leaf $\mathscr L$, we have $H_{1,x} = \pi_x^{-1}(\mathscr C_x(G))$,
\begin{gather*}\xymatrix{
0 \ar[r] &
{\rm Lin}_{\mathbb C}(T_xB,{\rm Lie}(G)) \ar[r] \ar[d] &
J_1(G/B)_x \ar[r] \ar[d]^-{\pi_x} &
G \ar[r] \ar[d] &
\{{\rm Id}\}\\
0 \ar[r] &
{\rm Lin}_{\mathbb C}(\mathscr F_x,{\rm Lie}(G)) \ar[r] &
J_1(G/\mathscr L)_x \ar[r] &
G \ar[r] \ar@/^1.0pc/[l]^-{\mathscr C_x}
& \{{\rm Id}\}.
}\end{gather*}
If $H_{1,x}\lhd J_1(G/B)_x$ then $\mathscr C_x(G)\lhd J_1(G/\mathscr L)_x$. However, this would produce an isomorphic direct product decomposition $J_1(G/\mathscr L)_x \simeq {\rm Lin}(\mathscr F_x, {\rm Lie}(G)) \times G$, which does not exist as the adjoint representation is not trivial. Hence, $H_{1,x}$ is not a normal subgroup, except in the case in which~$\mathscr F$ is the foliation by points.
\end{proof}

\begin{Lemma}\label{lem}${\rm Gal}_\infty(\mathcal H_{\rm red}) = {\rm Gal}_{\infty}(\mathcal H_{\rm proj})\times_B {\rm Gal}_\infty(\mathcal H_{\det})$.
\end{Lemma}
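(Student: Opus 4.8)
The plan is to run a Goursat-type argument. Write $H := {\rm Gal}_\infty(\mathcal H_{\rm red})$, $A := {\rm Gal}_\infty(\mathcal H_{\rm proj})$ and $C := {\rm Gal}_\infty(\mathcal H_{\det})$. From the decoupling $\psi_*(\mathcal H_{\rm red}) = \mathcal H_{\rm proj}\times_B \mathcal H_{\det}$ and the compatibility of jets with direct products, we already know that $H$ is a differential algebraic subgroup of $A\times_B C$ whose two projections $p_A\colon H\to A$ and $p_C\colon H\to C$ are surjective. So it only remains to prove that this inclusion is an equality, i.e., that $H$ fills out the whole fibered product.

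First I would set up the Goursat correspondence in the differential algebraic category. The projections $p_A$, $p_C$ are morphisms of differential algebraic group bundles, so their kernels are differential algebraic subgroups and $N_A := p_A(\ker p_C)\lhd A$, $N_C := p_C(\ker p_A)\lhd C$ are normal. The standard argument then produces a canonical isomorphism of differential algebraic groups $A/N_A \xrightarrow{\sim} C/N_C =: Q$, under which $H$ is realized as the fibered product of $A$ and $C$ over $Q$. In particular $H = A\times_B C$ if and only if $Q$ is the trivial group bundle, so the whole lemma reduces to showing $Q = \{{\rm Id}\}$.

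Next I would pin down the two sides of $Q$. On the one hand $Q$ is a quotient of $C = {\rm Gal}_\infty(\mathcal H_{\det})$, which by Proposition~\ref{pr:abc} is a subgroup bundle of $J(\mathbb C^*/B)$; since $\mathbb C^*$ is abelian so is $J(\mathbb C^*/B)$, hence $C$ and its quotient $Q$ are abelian. On the other hand $Q$ is a quotient of $A = {\rm Gal}_\infty(\mathcal H_{\rm proj})$. By the Corollary to Lemma~\ref{no_iso} we have $A = {\rm Gau}(\Gamma(\mathcal H_{\rm proj}))$, and by Theorem~\ref{th:Gau_is_adj} this is $\Gamma({\rm Adj}(\mathcal H_{\rm proj})) \subset J({\rm Gau}(P_{\rm proj})/B) = J({\rm PGL}_2(\mathbb C)/B)$. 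Since ${\rm PGL}_2(\mathbb C)$ is simple, I would argue (as in Lemma~\ref{lm:simple_dsimple}) that $A$ is simple as a differential algebraic group, so it admits no nontrivial abelian quotient. Therefore the abelian quotient $Q = A/N_A$ must be trivial, forcing $N_A = A$ and $N_C = C$, and then the fibered product over $Q = \{{\rm Id}\}$ is the full product, giving $H = A\times_B C$.

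The step I expect to be the main obstacle is the simplicity of $A$. Lemma~\ref{lm:simple_dsimple} is stated for the full jet group $J(G/B)$, whereas $A = \Gamma({\rm Adj}(\mathcal H_{\rm proj}))$ is the proper differential subgroup cut out by the horizontality (Lax) equations of ${\rm Adj}(\mathcal H_{\rm proj})$. The cleanest remedy is to observe that along horizontal sections the connection direction $\partial_x$ is completely integrated out, so that $A$ is canonically identified, as a differential algebraic group, with a jet group of ${\rm PGL}_2(\mathbb C)$-valued maps in the remaining parameter directions on $S$, to which Lemma~\ref{lm:simple_dsimple} applies directly with $G = {\rm PGL}_2(\mathbb C)$. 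One must check that this identification respects the total derivative operators, so that normality and abelianness of quotients are preserved; granting this, the order-$0$ reduction to simplicity of ${\rm PGL}_2(\mathbb C)$ used in the proof of Lemma~\ref{lm:simple_dsimple} carries over verbatim and rules out any nontrivial abelian quotient of $A$.
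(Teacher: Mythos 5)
Your proof is correct and follows essentially the same route as the paper's: the paper likewise runs a Goursat-type argument (crediting Kolchin), playing the abelianness of ${\rm Gal}_\infty(\mathcal H_{\det})$ against the simplicity of the projective factor via Lemma~\ref{lm:simple_dsimple}, the only real difference being that it forms the Goursat quotients fibrewise at each finite order $k$ and at generic $p\in B$, where quotients of algebraic groups are unproblematic, rather than as quotients of differential algebraic group bundles. The obstacle you flag --- that Lemma~\ref{lm:simple_dsimple} is stated for the full jet group $J(G/B)$ while ${\rm Gal}_{\infty}(\mathcal H_{\rm proj}) = \Gamma({\rm Adj}(\mathcal H_{\rm proj}))$ is a proper differential subgroup --- is genuine and is passed over silently in the paper's proof; your remedy of integrating out the $x$-direction so as to identify this group with a jet group in the parameter directions is the right way to close it.
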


\begin{proof}Here we follow a reasoning of Kolchin \cite{kolchin1968}. Consider the projections $\pi_{\rm proj}\colon {\rm Gal}_\infty(\mathcal H_{\rm red})
\allowbreak \to {\rm Gal}_{\infty}(\mathcal H_{\rm proj})$ and $\pi_{\rm proj}\colon {\rm Gal}_\infty(\mathcal H_{\rm red}) \to {\rm Gal}_{\infty}(\mathcal H_{\det})$. The kernel of $\pi_{\rm red}$ is a differential algebraic group of the form
\begin{gather*}\ker(\pi_{\rm proj}) = {\rm Id} \times_B K^{\det},\end{gather*}
where $K^{\det}$ is a differential algebraic subgroup of ${\rm Gal}_\infty(\mathcal H_{\det})$. Moreover, for generic $p\in B$ and any order $k$ we have $K^{\det}_{k,p}$ is a normal algebraic subgroup of ${\rm Gal}_k(\mathcal H_{\det})_p$. For the same reason we have that the kernel of $\pi_{\rm proj}$ is a differential algebraic group of the form
\begin{gather*}\ker(\pi_{\rm proj}) = K^{\rm proj} \times_B {\rm Id},\end{gather*}
where $K^{\rm proj}$ is a differential algebraic subgroup of ${\rm Gal}_\infty(\mathcal H_{\det})$. Moreover, for generic $p\in B$ and any order $k$ we have $K^{\rm proj}_{k,p}$ is a normal algebraic subgroup of ${\rm Gal}_k(\mathcal H_{\rm proj})_p$. The subgroup ${\rm Gal}_k(\mathcal H_{\rm})_p$ gives the graph of an algebraic group isomorphism
\begin{gather*}{\rm Gal}_k(\mathcal H_{\rm proj})_p/K^{\rm proj}_{k,p} \simeq
{\rm Gal}_k(\mathcal H_{\det})_p/K^{\det}_{k,p}.\end{gather*}
Therefore, the quotient ${\rm Gal}_k(\mathcal H_{\rm proj})_p/K^{\rm proj}_{k,p}$ is for any $k$ an abelian group of dimension smaller or equal than three. By Lemma~\ref{lm:simple_dsimple} the only possibility is ${\rm Gal}_k(\mathcal H_{\rm proj})_p = K^{\rm proj}_{k,p}$. This is for generic $p$ and all $k$, therefore ${\rm Gal}_\infty(\mathcal H_{\rm proj}) = K^{\rm proj}$ and it follows the required statement.
\end{proof}

The quotient map $\pi_{\rm red} \colon P \to P_{\rm red}$ induces a $2$-covering group bundle morphism
\begin{gather*} \pi_{\rm red*}\colon \ {\rm Gau}(\Gamma(\mathcal H))\to {\rm Gau}(\Gamma(\mathcal H_{\rm red})),\end{gather*} whose restriction to the Galois group bundles gives a surjective map
\begin{gather*} \pi_{\rm red}\colon \ {\rm Gal}_{\infty}(\mathcal H)\to {\rm Gal}_{\infty}(\mathcal H_{\rm red}).
\end{gather*} This morphism may be either an isomorphism or a $2$-covering. It turns out that $\pi_{\rm red}^{-1}({\rm Gal}_{\infty}(\mathcal H_{\rm red}))$ is connected, and the smallest differential algebraic subgroup of ${\rm Gau}(\Gamma(\mathcal H))$ that projects surjectively onto ${\rm Gal}_{\infty}(\mathcal H)_{\rm red}$. Therefore,
\begin{gather*}{\rm Gal}_{\infty}(\mathcal H) = \pi_{\rm red*}^{-1}({\rm Gal}_{\infty}(\mathcal H_{\rm red})),\nonumber\\
%\label{eq:GrupoH}
{\rm Gal}_{\infty}(\mathcal H) = \{j\sigma\in {\rm Gau}(\Gamma(\mathcal H)) \,|\, j {\det}(\sigma)\in {\rm Gal}_{\infty}(\mathcal H_{\det})\},
\end{gather*}
which is defined by the differential equations
\begin{gather*} \partial_x\sigma = \left[\left(\begin{matrix}
0 & 1 \\
\frac{\alpha\beta}{x(1-x)} & \frac{(\alpha+\beta+1)x - \gamma}{x(1-x)}
\end{matrix} \right),\sigma\right],\\
\partial_x\det(\sigma) = \partial_c\det(\sigma) = \partial_a\left(\frac{\partial_a\det(\sigma)}{\det(\sigma)}\right)=
\partial_b\left(\frac{\partial_a\det(\sigma)}{\det(\sigma)}\right)=
\partial_b\left(\frac{\partial_b\det(\sigma)}{\det(\sigma)}\right)= 0,
\end{gather*}
where $\sigma$ is an invertible $2\times 2$ matrix depending on $x$, $\alpha$, $\beta$, $\gamma$ and the differential operators $\partial_a$, $\partial_b$, $\partial_c$ are those given in Proposition~\ref{pr:abc}.

A consequence of the computation of ${\rm Gal}_{\infty}(\mathcal H)$ is the following: let us consider the Gauss hypergeometric series
\begin{gather*}{}_{2}{\rm F}_1(\alpha,\beta,\gamma;x) = \sum_{n=0}^\infty \frac{(\alpha)_n(\beta)_n}{(\gamma)_n}\frac{x^n}{n!}.\end{gather*}
It is well known that
\begin{gather*}{}_2{\bf F}_1 = \left(
\begin{matrix}
{}_2{\rm F}_1(\alpha,\beta,\gamma;x) & x^{1-\gamma}{}_2{\rm F}_1(1+\alpha-\gamma,1+\beta-\gamma,2-\gamma;x) \\
\dfrac{{\rm d}}{{\rm d}x}{}_2{\rm F}_1(\alpha,\beta,\gamma;x) & \dfrac{{\rm d}}{{\rm d}x}({}_2{\rm F}_1(1+\alpha-\gamma,1+\beta-\gamma,2-\gamma;x))
\end{matrix}
\right)\end{gather*}
is a fundamental matrix of solutions of~\eqref{HGmatrix} and thus $\mathcal H$-horizontal section. The only differential equation satisfied by~${}_{2}{\bf F}_1$ with respect to the parameters $\alpha$, $\beta$, $\gamma$ are those defining its Galois group, that are differential equations in~$\det({}_2{\bf F}_1)$. Therefore we may state that the \emph{hypergeometric series ${}_2{\rm F}_1(\alpha,\beta,\gamma; x)$ does not satisfy any algebraic partial differential equation with respect to the parameters~$\alpha$, $\beta$, $\gamma$.}

\appendix

\section{Differential algebraic groups}\label{ap:DAG}

Our treatment of differential algebraic groups is the geometrical theory proposed in \cite{le2010algebraic}. This formulation is slightly different from the better known algebraic formulation in \cite{kolchin1985}. Here we recall and comment the most important results, including Malgrage's version of Kiso--Cassidy theorem.

\subsection{Differential algebraic groups}\label{ss:DAG}

Let $\pi_{\mathscr G}\colon {\mathscr G}\to B$ a group bundle. As before, we assume ${\mathscr G}$ to be affine over $B$. The jet bundle $J_k(\mathscr G/B)\to B$ of $k$-jets of sections of $\pi_{\mathscr G}$ is an algebraic group bundle with the group operation $\big(j^k_xg\big)\big(j^k_xh\big) = j^k_x(gh)$. We consider $\mathcal O_{J_k(\mathscr G/B)}$ the ring of differential functions of order~$\leq k$ with rational coefficients in $B$. We may interpret it as the ring of regular functions in $J_k(\mathscr G/B)$ seen as an algebraic $\mathbb C(B)$-group. The group structure induces, by duality, a~Hopf $\mathbb C(B)$-algebra structure on~$\mathcal O_{J_K(\mathscr G/B)}$.

\begin{Remark}\label{Cartier}
Note that, by Cartier theorem (for instance, in \cite{demazure1970schemas}) $\mathbb C(B)$-algebraic groups are smooth (and therefore reduced), so that, Hopf ideals of $\mathcal O_{J_k(\mathscr G/B)}$ are radical.
\end{Remark}

\begin{Remark}
There is a natural bijective correspondence between Hopf ideals of $\mathcal O_{J_k(\mathscr G/B)}$ and rational group subbundles of $J_k(\mathscr G/B)\to B$. Given a Hopf ideal $\mathcal E$, the set of zeros of $\mathcal E\cap \mathbb C[J_k(\mathscr G/B)]$ is rational group subbundle. By abuse of notation we will refer to it as the set of zeros of $\mathcal E$.
\end{Remark}

The group multiplication is compatible with the order truncation, therefore we may take the projective limit, so that the jet bundle $J(\mathscr G/B)$ is a pro-algebraic group bundle and its ring of coordinates (on the generic point of $B$) $\mathcal O_{J(\mathscr G/B)} = \mathbb C(B)\otimes_{\mathbb C[B]} \mathbb C[J(\mathscr G/B)]$ a Hopf $\mathbb C(B)$-algebra.

Indeed, $\mathcal O_{J(\mathscr G/B)}$ is endowed with a differential ring structure, with the total derivative opera\-tors~$\mathfrak X(B)^{\rm tot}$. The Hopf and the differential structure are compatible in the following sense
\begin{gather*}\mu^*\big(X^{\rm tot}f\big) = \big(X^{\rm tot}\otimes 1 + 1 \otimes X^{\rm tot}\big)\mu^*(f),\qquad
e^*\big(X^{\rm tot}f\big) = X (e^*(f) ),\end{gather*}
where $\mu^*$ end $e^*$ stand for the coproduct and coidentity operators. Hence, we may say that $\mathcal O_{J(\mathscr G/B)}$ is a Hopf-differential algebra.

\begin{Definition}\quad
\begin{enumerate}\itemsep=0pt
\item A \emph{Hopf-differential} ideal of $\mathcal O_{J(\mathscr G/B)}$ is a Hopf ideal $\mathcal E$ of $\mathcal O_{J(\mathscr G/B)}$ such that $X^{\rm tot}\mathcal E \subset \mathcal E$ for every rational vector field~$X$ on~$B$.
\item A \emph{differential algebraic} subgroup $H\subset J(\mathscr G/B)$ is a Zariski closed subset consisting of the zero locus of some Hopf-differential ideal $\mathcal E$ of $\mathcal O_{J(\mathscr G/B)}$.
\end{enumerate}
\end{Definition}

As differential-Hopf ideals are radical (Remark \ref{Cartier}) we have that the natural correspondence between differential-Hopf ideals of $\mathcal O_{J(\mathscr G/B)}$
and differential algebraic subgroups of $J(G/B)$ is, in fact, bijective.

\begin{Example}\label{ex:KolchinG}Let $\mathscr G = {\rm GL}_n(\mathbb C) \times B$. A differential algebraic subgroup $H\subset J(\mathscr G/B)$ is a~differential algebraic group of $\mathscr G(\mathcal U)$ defined over $\mathbb C(B)$ in the sense of Kolchin (as in~\cite{kolchin1985}), where~$\mathcal U$ a universal differential field extension of $\mathbb C(B)$.
\end{Example}

Given a Hopf-differential ideal $\mathcal E$, its truncation $\mathcal E_k = \mathcal E\cap \mathcal O_{J_k(\mathscr G/B)}$ is Hopf-ideal of $\mathcal O_{J_k(\mathscr G/B)}$. Thus, a differential algebraic subgroup of $J(\mathscr G/B)$ is given by a chain of Hopf ideals
\begin{gather*}\mathcal E_0 \subset \mathcal E_1 \subset \cdots \subset \mathcal E_k \subset \cdots\subset \mathcal E\end{gather*}
and thus, a system of rational group subbundles of the $J_k(\mathscr G/B)$,
\begin{gather*}H\to \cdots \to H_k \to \cdots \to H_1 \to H_0 \to B.\end{gather*}

The differential ring $\mathcal O_{J(\mathscr G/B)}$ is, by construction, the quotient of a ring of differential polynomials with coefficients in $\mathbb C(B)$. Thus, it has the \emph{Ritt property} (see, for instance, \cite{kolchin1973}): radical differential ideals are finitely generated as radical differential ideals. Thus, there is a smallest order $k$ such that $\mathcal E$ coincides with $\{\mathcal E_k\}$, the radical differential ideal spanned by $\mathcal E_k$. We have that $H$ is completely determined by $H_k$ and we say that $H$ is a differential algebraic subgroup of $J(\mathscr G/B)$ of \emph{order} $k$.

A \emph{section} of $H$ is a section $g$ of $\pi_{\mathscr G}\colon \mathscr G\to B$ such that $j_xg\in H$ for any $x$ in the domain of definition of $g$. We can deal with rational, local analytic, convergent, or formal sections of $H$. We can also speak of sections of $H$ with coefficients in any differential field extension of $\mathbb C(B)$.

\begin{Example}The points of $H$ in the sense of Kolchin (as in Example \ref{ex:KolchinG}) are sections of $H$ with coefficients in arbitrary differential field extensions of $\mathbb C(B)$. In that context, it is useful to fix a differentially closed field extension of $\mathbb C(B)$ so that $H$ is determined by its sections.
\end{Example}

\subsection{Differential Lie algebras}\label{ss:DAL}

Let us consider $\pi_{\mathscr G}\colon {\mathscr G}\to B$ as before; let us define $\mathfrak g = e^*(\ker({\rm d}\pi_{\mathscr G}))$, therefore $\pi_{\mathfrak g} \colon \mathfrak g \to B$ is the Lie algebra bundle associated to ${\mathscr G}$.

In an analogous way the jet bundles of finite order $J_k(\mathfrak g/B)\to B$ are Lie algebra bundles and the limit $J(\mathfrak g/B)\to B$ is a pro-algebraic linear bundle of Lie algebras; the Lie bracket is defined as $[j_x v_1, j_xv_2] = j_x[v_1,v_2]$ for jets of local analytic sections.

We consider its ring of coordinates on the generic point of $B$, $\mathcal O_{J(\mathfrak g/B)}= \mathbb C(B)\otimes_{\mathbb C[B]}\mathbb C[J(\mathfrak g/B)]$. It has a Lie coalgebra structure with the induced comultiplication
\begin{gather*}\beta^*\colon \ \mathcal O_{J(\mathfrak g/B)} \to \mathcal O_{J(\mathfrak g/B)}\otimes_{\mathbb C(B)} \mathcal O_{J(\mathfrak g/B)}, \qquad
b^*(f)(j_xv_1,j_xv_2) = f([j_xv_1,j_xv_2]).\end{gather*}

Moreover, $\mathcal O_{J(\mathfrak g/B)}$ is a differential ring with the differential structure induced by total derivative operators $\mathfrak X(B)^{\rm tot}$. Indeed, the linear bundle structure of $J(\mathfrak g/B)$ is reflected in the ring $\mathcal O_{J(\mathfrak g/B)}$: linear differential functions are identified with linear
differential operators from the space of sections of $\mathfrak g$ on $\mathbb C(B)$. We have a sub-$\mathfrak X(B)$-module ${\rm Diff}_B(\mathfrak g)\subset \mathcal O_{J(\mathfrak g/B)}$ that spans the ring of differential functions (as in \cite[Section~4]{le2010algebraic}).

\begin{Definition}A differential Lie subalgebra $\mathfrak h\subset J(\mathfrak g/B)$ is a Zariski closed subset consisting of the zeroes of a differential ideal~$\mathcal E$ of $\mathcal O_{J(\mathfrak g/B)}$ satisfying:
\begin{enumerate}\itemsep=0pt
\item[1)] $\mathcal E$ is spanned by its linear part $\mathcal E \cap {\rm Diff}_B(\mathfrak g)$;
\item[2)] $\mathcal E$ is a Lie co-ideal.
\end{enumerate}
\end{Definition}

Note that a differential Lie subalgebra is also a differential subgroup with respect to the additive structure of $\mathfrak g$. There is an smallest $k$ such that $\{\mathcal E_k\} = \mathcal E$, the \emph{order} of~$\mathfrak h$; and $\mathfrak h$ is determined by its truncation $\mathfrak h_k$.

A differential algebraic group $H$ has an associated differential Lie algebra $\mathfrak h$ of the same order. For each $\ell$, the Lie algebra bundle~$\mathfrak h_{\ell}\to B$ is (on the generic point of $B$) the Lie algebra of the group bundle $H_{\ell}\to B$. Equations of $\mathfrak h$ can be found by linearization of the equations of~$H$ along the identity (see \cite[Section~4.5]{le2010algebraic}).

\subsection{Group connections}\label{ss:GC}
Let us consider a regular connection $\mathscr C$ on $\pi_{\mathscr G}\colon \mathscr G\to B$, that is, a regular foliation in $\mathscr G$ transversal to the fibers of $\pi_{\mathscr G}$. For each $g\in \mathscr G$ the leaf of $\mathscr C$ that passes through $g$ is the graph of a local analytic section $\tilde g$ of $\pi_{\mathscr G}$. The connection $\mathscr C$ defines a regular map
\begin{gather*}\gamma_{\mathscr C}\colon \ \mathscr G \to J(\mathscr G/B), \qquad g\mapsto j_{\pi_{\mathscr G}(g)}\tilde g,\end{gather*}
that is a section of $J(\mathscr G/B)\to \mathscr G$ and identifies $\mathscr G$ with the set of jets of $\mathscr C$-horizontal sections~$\Gamma(\mathscr C)$.

\begin{Definition}We say that $\mathscr C$ is a \emph{group connection} if $\gamma_{\mathscr C}$ is a group bundle morphism.
\end{Definition}

In general, a \emph{rational group connection} is a regular group connection defined on some dense Zariski open subset of $B$. Given a rational group connection $\mathscr C$, the variety $\Gamma(\mathscr C)$ of $\mathscr C$-horizontal leaves is an order $1$ differential algebraic subgroup of $J(\mathscr G/B)$ isomorphic (as group bundle, on the generic point of $B$) to $\mathscr G$.

\begin{Remark}Differential algebraic groups of the form $\Gamma(\mathscr C)$, for a group connection $\mathscr C$, correspond to \emph{constant} differential algebraic groups in the sense of Kolchin \cite{kolchin1985}. The group connection is the differential equation of the conjugation of the differential algebraic group with an algebraic group over the constants.
\end{Remark}

Analogously, a regular \emph{linear connection} $\mathscr L$ in $\mathfrak g\to B$ induces a~section $\gamma_{\mathscr L}$ of $J(\mathfrak g/B)\to \mathfrak g$ which is linear in fibers. We say that $\mathscr L$ it is a \emph{Lie connection} if $\gamma_{\mathscr L}$ is a Lie algebra morphism in fibers. A rational Lie connection is a regular Lie connection in some Zariski open subset of $B$.
The variety of horizontal sections $\Gamma(\mathscr L)$ of a rational Lie connection is a differential Lie subalgebra of $J(\mathfrak g/B)$ of order $1$.

It is clear that a group connection $\mathscr C$ can be linearized along the identity, producing Lie connection $\mathscr C'$ with the same domain of regularity. In such case, $\Gamma(\mathscr C')$ is the Lie algebra of $\Gamma(\mathscr C)$.

We may also define partial group and Lie connections. Let us fix $\mathscr F$ a singular foliation in $B$. Let us recall that a $\mathscr F$-connection in $\mathscr G$ (resp. $\mathfrak g$) is just an foliation $\mathscr C$ of the same rank of $\mathscr F$ and that projects onto $\mathscr F$. A local section is $\mathscr C$-horizontal if its derivative maps $\mathscr F$ to $\mathscr C$. The jets of local sections of $\mathscr C$ define a closed subset ${\Gamma}(\mathscr C)$ in $J(\mathscr G/B)$ (resp.\ in $J(\mathfrak g/B)$)
\begin{gather*}{\Gamma}(\mathscr C) = \overline{\{j_x s \,|\, x\in B,\, s \mbox{ local }\mathscr C\mbox{-horizontal section} \} }.\end{gather*}

A \emph{partial group $\mathscr F$-connection} is a partial Ehresmann $\mathscr F$-connection $\mathscr C$ whose set of horizontal sections ${\Gamma}(\mathscr C) \subset J(\mathscr G/B)$
is a differential algebraic group. A \emph{partial Lie $\mathscr F$-connection} is a $\mathscr F$-connection whose set of horizontal sections is a differential Lie subalgebra. As before, those objects are of order 1, and the linearization of a group $\mathscr F$-connection is a Lie $\mathscr F$-connection.

We say that a differential subgroup $H\subset J(\mathscr G/B)$ is \emph{Zariski dense} in $\mathscr G$ if $H_0 = \mathscr G$. In the algebraic setting, the classification of Zariski dense subgroups in simple groups is due to Cassidy~\cite{cassidy1989classification}. We follow the exposition of Malgrange, that finds the key argument of the proof in the a~result of Kiso~\cite{kiso1979local}.

\begin{Theorem}[{Kiso--Cassidy, \cite[Theorem~8.1]{le2010algebraic}}]\label{th:KC} Let $\mathfrak g\to B$ be a Lie algebra bundle with simple fibers. Let $\mathfrak h\subset J(\mathfrak g/B)$ be a differential subalgebra such that $\mathfrak h_0 = \mathfrak g$. Then there is a~singular foliation $\mathscr F$ in $B$ and a~partial Lie $\mathscr F$-connection $\mathscr L$ in $\mathfrak g$ such that $\mathfrak h = \Gamma(\mathscr L)$.
\end{Theorem}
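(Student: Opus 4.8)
The plan is to recover the foliation $\mathscr F$ and the connection $\mathscr L$ from the first-order truncation $\mathfrak h_1 \subset J_1(\mathfrak g/B)$, and then to prove that $\mathfrak h$ is of order $1$, so that it coincides with the prolongation $\Gamma(\mathscr L)$. The representation theory of the simple fibre will be used twice: first, through Schur's lemma, to put the relevant $\mathfrak g_x$-modules in a rigid tensor form, and second, to force the defining system to be generated in order $1$.

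First I would analyse $\mathfrak h_1$ fibrewise. Fixing a regular point $x\in B$ and a local splitting, a $1$-jet records a section's value and its first derivative, which together with $[j^1_x v_1, j^1_x v_2] = j^1_x[v_1,v_2]$ and the Leibniz rule gives the semidirect decomposition
\[
J_1(\mathfrak g/B)_x \simeq \mathfrak g_x \ltimes \big(T^*_x B \otimes \mathfrak g_x\big),
\]
in which the abelian ideal $T^*_x B \otimes \mathfrak g_x$ carries the module structure $\mathrm{id}\otimes\mathrm{ad}$. The hypothesis $\mathfrak h_0=\mathfrak g$ makes $\mathfrak h_1 \to \mathfrak g_x$ surjective, so its kernel $K_x := \mathfrak h_1 \cap (T^*_x B \otimes \mathfrak g_x)$ is a $\mathfrak g_x$-submodule. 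Since $\mathfrak g_x$ is simple, the adjoint representation is irreducible and $\mathrm{End}_{\mathfrak g_x}(\mathfrak g_x)=\mathbb C$; hence by Schur every such submodule has the form $K_x = W_x\otimes\mathfrak g_x$ for a unique $W_x\subseteq T^*_x B$. Letting $\mathscr F_x = W_x^{\circ}$ be the annihilator defines a distribution, whose involutivity — so that $\mathscr F$ is a genuine singular foliation — follows from $\mathfrak h$ being closed under the total derivative operators.

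Next I would read off the connection. The quotient $\mathfrak h_1/K_x$ maps isomorphically onto $\mathfrak g_x$, so it is the graph of a linear map $\mathfrak g_x \to \mathscr F_x^*\otimes\mathfrak g_x$; the requirement that this graph be a Lie subalgebra of $\mathfrak g_x\ltimes(\mathscr F_x^*\otimes\mathfrak g_x)$ is exactly the condition that the associated covariant derivative in each direction of $\mathscr F_x$ be a derivation of $\mathfrak g_x$. This is precisely the datum of a partial Lie $\mathscr F$-connection $\mathscr L$ on $\mathfrak g$, with flatness supplied by differential closure together with the Frobenius condition, so that $\mathfrak h_1 = \Gamma(\mathscr L)_1$ by construction. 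Because the defining differential ideal $\mathcal E$ of $\mathfrak h$ contains the differential ideal generated by its order-$1$ part $\mathcal E_1$, and the latter cuts out the prolongation $\Gamma(\mathscr L)$, the inclusion $\mathfrak h \subseteq \Gamma(\mathscr L)$ is automatic.

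The remaining and, I expect, hardest step is the reverse inclusion $\Gamma(\mathscr L)\subseteq\mathfrak h$, equivalently that $\mathfrak h$ has order $1$; this is the content of Kiso's local theorem. I would argue on symbols: the space $\sigma_k(\mathfrak h):=\ker(\mathfrak h_k\to\mathfrak h_{k-1})\subseteq S^k T^*_x B\otimes\mathfrak g_x$ is, by bracketing with the order-$0$ part, again a $\mathfrak g_x$-submodule, so Schur gives $\sigma_k(\mathfrak h)=V_k\otimes\mathfrak g_x$ with $V_k\subseteq S^k T^*_x B$, and the inclusion $\mathfrak h\subseteq\Gamma(\mathscr L)$ already forces $V_k\subseteq S^k W_x$. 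The crux is the opposite inclusion $V_k=S^k W_x$, i.e.\ that no genuine relation of order $\geq 2$ survives; this is a formal-integrability statement for the symbol ideal, and it is here that simplicity of $\mathfrak g_x$ is decisive, ruling out the proper invariant subspaces that would correspond to higher-order differential relations. Granting this, an induction on $k$ yields $\mathfrak h_k=\Gamma(\mathscr L)_k$ for every $k$, whence $\mathfrak h=\Gamma(\mathscr L)$, as desired.
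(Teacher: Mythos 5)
The paper itself offers no proof of this statement: it is quoted as Kiso--Cassidy in Malgrange's formulation, with a citation to \cite[Theorem~8.1]{le2010algebraic}, so your attempt has to be measured against that literature rather than against an internal argument. Your first-order analysis is correct and is indeed how those proofs begin: the semidirect decomposition $J_1(\mathfrak g/B)_x\simeq \mathfrak g_x\ltimes\big(T^*_xB\otimes\mathfrak g_x\big)$, the isotypicity argument giving $K_x=W_x\otimes\mathfrak g_x$, the recovery of $\mathscr F$ and of the partial Lie connection $\mathscr L$ from the graph $\mathfrak h_1/K_x$, and the inclusion $\mathfrak h\subseteq\Gamma(\mathscr L)$ from $\mathcal E\supseteq\{\mathcal E_1\}$. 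The genuine gap is exactly where you write ``Granting this'': the equality $V_k=S^kW_x$ is the entire content of the theorem, and the mechanism you invoke for it --- Schur's lemma plus simplicity ``ruling out the proper invariant subspaces'' --- cannot possibly close it. Every subspace of the form $V\otimes\mathfrak g_x\subseteq S^kT^*_xB\otimes\mathfrak g_x$ is $\mathfrak g_x$-invariant, so irreducibility of the adjoint representation places no constraint at all on $V_k$ beyond what you already have; invariance-theoretic rigidity is simply the wrong tool at orders $\geq 2$. That this step needs a real input is shown by the abelian case: for $\mathfrak g$ abelian, $\mathfrak h=\{jv\,:\,\partial_s^2v=0\}$ is a differential Lie subalgebra with $\mathfrak h_0=\mathfrak g$ which is not of connection type, and nothing in your higher-order argument distinguishes the simple case from this one.

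What simplicity actually buys at higher order is \emph{perfectness}, $[\mathfrak g_x,\mathfrak g_x]=\mathfrak g_x$, used through the bracket on the associated graded --- and with it your gap can be closed in a few lines, which is essentially Kiso's lemma as used by Malgrange. The order filtration on $J(\mathfrak g/B)_x$ is multiplicative for the bracket, and its associated graded is the current algebra $\bigoplus_k S^kT^*_xB\otimes\mathfrak g_x$ with $[P\otimes\xi,Q\otimes\eta]=PQ\otimes[\xi,\eta]$. Since $\mathfrak h_x$ is a bracket-closed subspace, $\mathrm{gr}(\mathfrak h_x)=\bigoplus_k V_k\otimes\mathfrak g_x$ is a graded subalgebra, so $(V_k\cdot V_l)\otimes[\mathfrak g_x,\mathfrak g_x]\subseteq V_{k+l}\otimes\mathfrak g_x$; perfectness then gives $V_k\cdot V_l\subseteq V_{k+l}$. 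As $V_1=W_x$ and $S^kW_x$ is spanned by $k$-fold products of elements of $W_x$, this yields $S^kW_x\subseteq V_k$, which together with your inclusion $V_k\subseteq S^kW_x$ (from $\mathfrak h\subseteq\Gamma(\mathscr L)$) gives the equality; your induction, plus a completeness argument to pass from equality of associated gradeds to equality of the filtered objects, then finishes the proof. (You should also flag, as the paper implicitly does by working at the generic point of $B$, the generic surjectivity of the truncations $\mathfrak h\to\mathfrak h_k$, without which the identification of $\sigma_k(\mathfrak h)$ with a quotient of $\mathfrak h_x\cap F^k$ is not automatic.) So: right skeleton, but the decisive step is missing, and the reason you give for it is not the reason it is true.
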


\begin{Corollary}Let $\mathscr G\to B$ be a group bundle with simple fibers. Let $H\subset J(\mathfrak g/B)$ be a~differential algebraic subgroup such that $H_0 = \mathscr G$. Then there is a singular foliation $\mathscr F$ in $B$ and a~partial group $\mathscr F$-connection~$\mathscr C$ in $\mathcal G$ such that $H = \Gamma(\mathscr C)$.
\end{Corollary}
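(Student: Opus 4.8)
The statement is the group-level counterpart of Theorem~\ref{th:KC}. The plan is to linearize $H$, apply Kiso--Cassidy to its differential Lie algebra, and then integrate the resulting Lie connection back to a group connection. Throughout, $\mathfrak g = e^*(\ker {\rm d}\pi_{\mathscr G})$ denotes the Lie algebra bundle of $\mathscr G$ and $H\subset J(\mathscr G/B)$.

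First I would pass to the associated differential Lie algebra $\mathfrak h\subset J(\mathfrak g/B)$ of $H$ (Section~\ref{ss:DAL}). As $G$ is simple, the fibers of $\mathfrak g$ are simple Lie algebras; and since $H_0 = \mathscr G$, the order-zero part $\mathfrak h_0$ is the Lie algebra of $\mathscr G$, that is $\mathfrak h_0 = \mathfrak g$. Thus Theorem~\ref{th:KC} applies and produces a singular foliation $\mathscr F$ in $B$ together with a partial Lie $\mathscr F$-connection $\mathscr L$ in $\mathfrak g$ with $\mathfrak h = \Gamma(\mathscr L)$. In particular $\mathfrak h$ is of order $1$; since $H$ and $\mathfrak h$ have the same order, $H$ is of order~$1$ as well and is determined by the rational group subbundle $H_1\subset J_1(\mathscr G/B)$.

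The heart of the argument is to recognize $H_1$ as the graph of a group connection. Consider the evaluation morphism $p\colon H_1\to H_0 = \mathscr G$. Its differential at the identity is the order-zero projection $\mathfrak h_1\to\mathfrak h_0$; as $\mathscr L$ is a Lie connection, the section $\gamma_{\mathscr L}$ identifies $\mathfrak h_1 = \Gamma_1(\mathscr L)$ with $\mathfrak g$, so $\ker({\rm d}p) = \ker(\mathfrak h_1\to\mathfrak g) = 0$ and $\ker(p)$ is $0$-dimensional. Now, in the semidirect decomposition $J_1(\mathscr G/B)_x\simeq {\rm Lin}_{\mathbb C}(T_xB,\mathfrak g_x)\ltimes\mathscr G_x$ (the same used in Lemma~\ref{lm:simple_dsimple}), the fiber of $\ker(p)$ lies in the vector part ${\rm Lin}_{\mathbb C}(T_xB,\mathfrak g_x)$, a vector group, which in characteristic zero has no nontrivial $0$-dimensional algebraic subgroup. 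Hence $\ker(p)$ is trivial, and as fiber dimensions agree, $p\colon H_1\to\mathscr G$ is an isomorphism on the generic point. Its inverse $\gamma = p^{-1}\colon \mathscr G\to J_1(\mathscr G/B)$ is a section of the evaluation map and, being the inverse of a group bundle morphism, is itself a group bundle morphism; it defines a rank-$(\dim\mathscr F)$ distribution $\mathscr C$ on $\mathscr G$ projecting onto $\mathscr F$.

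The step I expect to be the main obstacle is to verify that $\mathscr C$ is involutive, so that it is a genuine $\mathscr F$-connection, and then that $\Gamma(\mathscr C) = H$. I would deduce involutivity from $H$ being a differential algebraic group of order~$1$: the order-$2$ truncation $H_2$ is the prolongation of $H_1$, and the same linearization (now with $\mathfrak h_2 = \Gamma_2(\mathscr L)\cong\mathfrak g$) shows $H_2\to\mathscr G$ is again an isomorphism on the generic point, so $H_2\to H_1$ is an isomorphism. Surjectivity of this prolongation is precisely the Frobenius compatibility for $\mathscr C$: over every point of $\Gamma_1(\mathscr C) = H_1$ there is a horizontal $2$-jet, so $\mathscr C$ is integrable. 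With $\mathscr C$ a flat partial $\mathscr F$-connection whose associated section $\gamma_{\mathscr C} = \gamma$ is a group bundle morphism, $\mathscr C$ is a partial group $\mathscr F$-connection. Finally, $H$ has order~$1$ with $H_1 = \Gamma_1(\mathscr C)$ and all higher truncations matching under the prolongation identification, whence $H = \Gamma(\mathscr C)$. As a consistency check, linearizing $\mathscr C$ along the identity returns $\mathscr L$, in agreement with $\mathfrak h = \Gamma(\mathscr L)$ being the Lie algebra of $H$.
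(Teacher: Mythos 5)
Your overall strategy (linearize $H$, apply Theorem~\ref{th:KC} to $\mathfrak h$, then reconstruct a connection from $H_1$ and get flatness from the prolongation $H_2\to H_1$) is the natural one, and indeed the paper states this Corollary with no proof at all, leaving it as an application of Theorem~\ref{th:KC}. However, your central step is wrong. Theorem~\ref{th:KC} produces a \emph{partial} Lie $\mathscr F$-connection $\mathscr L$, and for a partial connection the truncation $\Gamma_1(\mathscr L)\to\mathfrak g$ is \emph{not} an isomorphism: horizontality only constrains the derivatives of a section along $\mathscr F$, leaving the transverse derivatives free, so the kernel at a generic $x$ is ${\rm Lin}_{\mathbb C}(T_xB/\mathscr F_x,\mathfrak g_x)$, which vanishes only when $\mathscr F=TB$. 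The section $\gamma_{\mathscr L}$ you invoke is defined in Section~\ref{ss:GC} only for full (regular Ehresmann) connections, not for partial ones. Hence the premise $\ker({\rm d}p)=0$ fails, and $p\colon H_1\to\mathscr G$ is not generically an isomorphism. A concrete counterexample to your intermediate claim is $H=J(\mathscr G/B)$ itself: it is a differential algebraic subgroup with $H_0=\mathscr G$ (here $\mathscr F$ is the foliation by points and every section is $\mathscr C$-horizontal), yet $H_1=J_1(\mathscr G/B)\to\mathscr G$ has fibers ${\rm Lin}_{\mathbb C}(T_xB,\mathfrak g_x)$. Your argument would force every Zariski-dense differential subgroup to be the graph of a \emph{full} group connection, i.e., a constant group in Kolchin's sense, which is exactly what the Corollary must not assert; the applications (Theorem~\ref{th:main}, Example~\ref{ex:fuchsian}) depend on genuinely partial connections. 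There is also an internal inconsistency signalling the same confusion: a section $\mathscr G\to J_1(\mathscr G/B)$ would define a horizontal distribution of rank $\dim B$, not of rank ${\rm rank}\,\mathscr F$ as you claim.

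The repair is to study the kernel rather than to kill it. The evaluation $p\colon H_1\to\mathscr G$ is a surjective morphism of group bundles, and its kernel $K_x=H_1\cap{\rm Lin}_{\mathbb C}(T_xB,\mathfrak g_x)$ (in the semidirect decomposition of Lemma~\ref{lm:simple_dsimple}) is a vector subgroup; comparing with the linearization, which Theorem~\ref{th:KC} identifies as $\mathfrak h_1=\Gamma_1(\mathscr L)$, gives $K_x=\ker(\mathfrak h_1\to\mathfrak g)={\rm Lin}_{\mathbb C}(T_xB/\mathscr F_x,\mathfrak g_x)$. Since the fibers of $p$ are precisely the cosets of $K$, all jets of $H_1$ lying over a given $g\in\mathscr G_x$ share the same derivative along $\mathscr F_x$, and this common value defines a distribution $\mathscr C_g\subset T_g\mathscr G$ of rank ${\rm rank}\,\mathscr F$ projecting onto $\mathscr F$ with $\Gamma_1(\mathscr C)=H_1$. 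From this point your argument does work: surjectivity of $H_2\to H_1$ gives Frobenius integrability of $\mathscr C$; the equality $H=\Gamma(\mathscr C)$ holds because both are the zero locus of the radical differential ideal generated by the common order-one equations (using that $H$ has order one, as you correctly deduced from the paper's statement that $H$ and $\mathfrak h$ have the same order); and no separate verification that $\mathscr C$ is a \emph{group} connection is needed, since the paper defines a partial group $\mathscr F$-connection as a partial Ehresmann $\mathscr F$-connection whose horizontal sections form a differential algebraic group, which $\Gamma(\mathscr C)=H$ is by hypothesis.
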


\subsection{The adjoint connection}\label{ss:Adjoint_cnx}

Let us consider now $\pi\colon P\to B$ a principal bundle with structural group $G$. Let $\mathscr F$ be a singular foliation on $B$ and $\mathscr D$ be a partial $G$-invariant $\mathscr F$-connection on $P$. Let~$F$ be an affine $G$-space with an action $\alpha\colon G\to {\rm Aut}(F)$. The associated bundle associated bundle~$P[\alpha]$ is defined as the balanced construction $P\times_{\alpha} F$; it is a bundle $P[\alpha]\to B$ with fibers $F$. The $G$-invariant foliation $\mathscr D \times \{0\}$ in $P\times F$ projects onto a foliation on $P[\alpha]$ that we call ${\alpha(\mathscr D)}$; it is the induced partial Ereshmann $\mathscr F$-connection on $P[\alpha]$.

Let ${\rm Gau}(P)\to B$ be the group bundle of gauge automorphisms of $P$. Let us recall that ${\rm Gau}(P)$ can be constructed as the associated bundle $P[{\rm Adj}] = (P\times_{\rm Adj} G)$ for the adjoint action of~$G$ on itself. The induced $\mathscr F$-connection ${\rm Adj}(\mathscr D)$ turns out to be a group $\mathscr F$-connection in~${\rm Gau}(P)$.

\begin{Theorem}[{\cite[Theorem~7.6]{le2010algebraic}}]\label{th:Mal} Assume that $G$ is semi-simple and connected, and let~$\mathscr F$ a~singular foliation in $B$. Then all partial group $\mathscr F$-connections in ${\rm Gau}(P)$ are of the form ${\rm Adj}(\mathscr D)$ where~$\mathscr D$ is a~partial $G$-invariant $\mathscr F$-connection in $P$.
\end{Theorem}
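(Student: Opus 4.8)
The plan is to exhibit the assignment $\mathscr D\mapsto{\rm Adj}(\mathscr D)$ as an \emph{affine isomorphism} between the two relevant spaces of connections, so that surjectivity becomes a formal consequence. First I would observe that, after restricting $B$ to a dense open affine subset on which $P$, $\mathscr F$ and the connections are regular, both collections are affine spaces. The $G$-invariant $\mathscr F$-connections on $P$ form an affine space modeled on $\Gamma(\mathscr F^{*}\otimes{\rm ad}\,P)$, since the difference of two of them is a $1$-form along $\mathscr F$ with values in the adjoint bundle ${\rm ad}\,P=P\times_{\rm Adj}\mathfrak g$. Likewise the partial group $\mathscr F$-connections on ${\rm Gau}(P)$ form an affine space: the difference of the horizontal lifts $\widehat X$ of a field $X$ tangent to $\mathscr F$ is a vertical, multiplicative vector field along the fibres ${\rm Gau}(P)_b\cong G$, i.e.\ fibrewise the infinitesimal generator of a one-parameter group of automorphisms; such fields are exactly the sections of the derivation bundle ${\rm Der}({\rm ad}\,P)$, so the model space here is $\Gamma(\mathscr F^{*}\otimes{\rm Der}({\rm ad}\,P))$.

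The key structural input is Whitehead's first lemma: for a semisimple Lie algebra $\mathfrak g$ in characteristic zero every derivation is inner, so ${\rm ad}\colon\mathfrak g\to{\rm Der}(\mathfrak g)$ is an isomorphism (injectivity from triviality of the centre, surjectivity from $H^{1}(\mathfrak g,\mathfrak g)=0$). This globalizes to a bundle isomorphism ${\rm ad}\colon{\rm ad}\,P\xrightarrow{\sim}{\rm Der}({\rm ad}\,P)$; and because $G$ is connected, parallel transport of any group connection stays in the identity component ${\rm Aut}(G)^{0}={\rm Inn}(G)$, so nothing outside ${\rm ad}\,\mathfrak g$ is ever seen infinitesimally. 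Computing the fundamental vector field of $\xi\in\mathfrak g$ for the adjoint (conjugation) action of $G$ on itself identifies the linear part of $\mathscr D\mapsto{\rm Adj}(\mathscr D)$ with precisely this map ${\rm ad}$. Hence ${\rm Adj}$ is an affine map whose associated linear map is an isomorphism, i.e.\ a bijection between the two affine spaces.

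Surjectivity then runs as follows. I would fix any $G$-invariant $\mathscr F$-connection $\mathscr D_{0}$ on $P$ (one always exists rationally, e.g.\ the product foliation in a local trivialization $P|_{B'}\cong G\times B'$), giving the base point ${\rm Adj}(\mathscr D_{0})$ on the target side. Given a group $\mathscr F$-connection $\mathscr C$, the difference $\mathscr C-{\rm Adj}(\mathscr D_{0})\in\Gamma(\mathscr F^{*}\otimes{\rm Der}({\rm ad}\,P))$ is, by the isomorphism above, of the form ${\rm ad}(a)$ for a unique $a\in\Gamma(\mathscr F^{*}\otimes{\rm ad}\,P)$; setting $\mathscr D=\mathscr D_{0}+a$ gives ${\rm Adj}(\mathscr D)=\mathscr C$ as distributions, and the same injectivity of ${\rm ad}$ yields uniqueness of $\mathscr D$, so the correspondence is in fact bijective.

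The one genuinely nontrivial point, and the step I expect to be the main obstacle, is to check that the $\mathscr D$ so produced is again \emph{flat}, i.e.\ an honest $\mathscr F$-connection and not merely an Ehresmann distribution. For this I would use naturality of curvature under associated-bundle constructions: the curvature of ${\rm Adj}(\mathscr D)$ is the image ${\rm ad}(F_{\mathscr D})$ of the curvature $F_{\mathscr D}\in\Gamma(\wedge^{2}\mathscr F^{*}\otimes{\rm ad}\,P)$ under the injective action differential ${\rm ad}$. Since $\mathscr C={\rm Adj}(\mathscr D)$ is a foliation its curvature vanishes, whence ${\rm ad}(F_{\mathscr D})=0$ and therefore $F_{\mathscr D}=0$ by injectivity. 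Thus $\mathscr D$ is integrable and $G$-invariant, completing the proof. The delicate parts to write out carefully are the multiplicative-field/derivation correspondence that pins down the model space, and this curvature transfer, both of which rest on the triviality of the centre of a semisimple $\mathfrak g$.
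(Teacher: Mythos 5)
The paper itself contains no proof of this statement: it is imported verbatim from Malgrange \cite[Theorem~7.6]{le2010algebraic}, and the only related material in the paper is the unlabelled Remark immediately after the theorem (uniqueness of $\mathscr D$, from faithfulness of the adjoint action). So your proposal can only be measured against that cited argument, and in substance you have reconstructed it correctly: both spaces of (not necessarily flat) partial connections are affine spaces; the linear part of $\mathscr D\mapsto{\rm Adj}(\mathscr D)$ is the bundle map ${\rm ad}\colon{\rm ad}\,P\to{\rm Der}({\rm ad}\,P)$; Whitehead's first lemma together with triviality of the centre of $\mathfrak g$ makes this an isomorphism; and flatness transfers back through injectivity of the fundamental-vector-field map of the conjugation action. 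Your identification of vertical multiplicative vector fields with sections of ${\rm Der}({\rm ad}\,P)$ is legitimate precisely in the situation at hand, because $G$ is connected and every derivation of $\mathfrak g$ is inner, hence integrates to an inner automorphism of $G$ itself (for a non-semisimple group this identification could fail, since outer derivations need only integrate on the universal cover). The curvature-transfer step is the standard naturality of curvature under associated-bundle constructions and is correct, including the observation that it rests on ${\rm z}(\mathfrak g)=0$.

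The one point that is actually wrong as written is the existence of the base point $\mathscr D_0$: a principal bundle under a semisimple group need not be Zariski-locally trivial (this holds for special groups such as ${\rm SL}_n$ or ${\rm Sp}_{2n}$, but fails in general, e.g., for ${\rm PGL}_n$-bundles, which are only \'etale-locally trivial), so you cannot invoke a trivialization $P|_{B'}\cong G\times B'$ to produce a rational $G$-invariant connection. The repair is easy and uses the paper's standing hypotheses: a $G$-invariant Ehresmann $\mathscr F$-connection is the same thing as a splitting of the restricted Atiyah sequence $0\to{\rm ad}\,P\to{\rm At}(P)|_{\mathscr F}\to\mathscr F\to 0$ of vector bundles, and over a dense affine open subset of $B$ (on which $\mathscr F$ is a subbundle of $TB$) such a splitting exists because $H^1$ of a coherent sheaf on an affine variety vanishes; here the assumption that $B$ is affine and $P$ is affine over $B$ is what makes the argument go through. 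With that substitution your proof is complete, and the injectivity of ${\rm ad}$ that you use for uniqueness is exactly the content of the Remark following the theorem.
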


\begin{Remark}Note that, since the adjoint action of semi-simple $G$ in itself is faithful, the $G$-invariant partial connection $\mathscr D$ inducing ${\rm Adj}(\mathscr D)$ is unique.
\end{Remark}

\begin{Example} Let us consider compatible linear equations in $P = {\rm GL}_n(\mathbb C) \times \mathbb C^{n+m}$:
\begin{gather*}\frac{\partial U}{\partial x_i} = A_i(x,s)U.\end{gather*}
Then ${\rm Gau}(P) = {\rm GL}_n(\mathbb C) \times \mathbb C^{n+m}$ and the induced group connection is
\begin{gather*}\frac{\partial \sigma}{\partial x_i} = [A_i(x,s),\sigma].\end{gather*}
The horizontal sections $\sigma(x,s)$ are gauge symmetries of the equation.
\end{Example}

Theorems \ref{th:KC} and~\ref{th:Mal} give us a complete description of \emph{Zariski dense} differential algebraic subgroups of $J({\rm Gau}(P)/B)$ when $G$ is simple and connected. Such subgroups are of the form $\Gamma({\rm Adj}(\mathscr D))$ for a partial $G$-invariant connections~$\mathscr D$ in~$P$.

\section{Parametrized Picard--Vessiot theory}\label{appendix-B}

Here we discuss the relation between the differential Galois theory for connections with parameters, and the parameterized Picard--Vessiot theory presented in \cite{cassidy2005galois}.

Let us consider a rational (in practice, we assume regular) partial ${\rm GL}_n(\mathbb C)$-invariant connection $\mathscr D$ on $P = \rm{GL}_n(\mathbb C)\times B$ parameterized by $\rho\colon B\to S$, with $\mathbb C(S)$ relatively algebraically closed in $\mathbb C(B)$. We consider the following basis of $\mathfrak X(B)$: we take a trancendence basis $\{s_1,\ldots,s_m\}$ of $\mathbb C(S)$ and then extend it to a transcendence basis $\{s_1,\ldots,s_m,x_1,\ldots,x_m\}$; then we consider $\Pi = \big\{\frac{\partial}{\partial s_i}\big\}$, $\Lambda = \big\{\frac{\partial}{\partial x_i} \big\}$ and $\Delta = \Pi\cup\Lambda$. Then $\mathbb C(B)$ is a $\Delta$-field, and $\mathbb C(B)^{\Lambda} = \mathbb C(S)$. The differential equation for the $\mathscr D$-horizontal sections is a linear integrable system
\begin{gather}\label{eq:linearS}
\frac{\partial U}{\partial x_i} = A_iU, \qquad A_i\in\mathfrak{gl}_n(\mathbb C(B)),
\end{gather}
satisfying compatibility conditions $\frac{\partial A_j}{\partial x_i} - \frac{\partial A_i}{\partial x_j} = [A_i,A_j]$.

\begin{Definition}[{\cite[Definition~9.4(1)]{cassidy2005galois}}] A PPV ring over $\mathbb C(B)$ for the equation~\eqref{eq:linearS} is a~$\Delta$-ring~$R$ containing $\mathbb C(B)$ and satisfying:
\begin{itemize}\itemsep=0pt
\item[(a)] $R$ is a $\Delta$-simple $\Delta$-ring (i.e., it does not contain non-trivial $\Delta$-ideals);
\item[(b)] there exist a matrix $\Phi\in {\rm GL}(R)$ such that $\frac{\partial \Phi}{\partial x_i} = A_i\Phi$;
\item[(c)] $R$ is spanned, as $\Delta$-ring by the entries of $\Phi$ and the inverse of its determinant; $R = \mathbb C(B)\big\{\Phi,\Phi^{-1}\big\}_{\Delta}$.
\end{itemize}
\end{Definition}

Let us recall the construction of the differential variety $\Gamma(\mathscr D) \subset J(P/B)$. It is the variety of jets of horizontal sections, so that
\begin{gather*}\mathcal O_{\Gamma(\mathscr D)} = \mathbb C(B)\big\{u_{ij},\det(u_{ij})^{-1}\big\}_\Delta / \mathcal E,\end{gather*}
where $\mathcal E$ is the radical differential ideal spanned by the connection $\mathcal E =\big\{\frac{\partial u_{ij}}{\partial x_k} - a_{ik}u_{kj} \big\}$. Therefore, by construction, $\mathcal O_{\Gamma(\mathscr D)}$ satisfies conditions~(a) and~(c) of the definition, but it is not in general $\Delta$-simple. However, the following is clear.

\begin{Proposition}For any maximal $\Delta$-ideal $\mathfrak m\subset\mathcal O_{\Gamma(\mathscr D)}$ the quotient $R_{\mathfrak m} = \mathcal O_{\Gamma(\mathscr D)}/\mathfrak m$ is a PPV ring over $\mathbb C(B)$ for equation~\eqref{eq:linearS}.
\end{Proposition}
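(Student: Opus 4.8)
The plan is to verify directly the three conditions (a), (b), (c) in the definition of a PPV ring for the quotient $R_{\mathfrak m}=\mathcal O_{\Gamma(\mathscr D)}/\mathfrak m$, the only substantive input being that maximality of $\mathfrak m$ supplies condition~(a). First I would check that $\mathbb C(B)$ still embeds into $R_{\mathfrak m}$, so that $R_{\mathfrak m}$ is genuinely a $\Delta$-ring over $\mathbb C(B)$: since $\mathfrak m$ is a proper $\Delta$-ideal, the intersection $\mathfrak m\cap\mathbb C(B)$ is a proper $\Delta$-ideal of the $\Delta$-field $\mathbb C(B)$, hence zero, and the composite $\mathbb C(B)\hookrightarrow\mathcal O_{\Gamma(\mathscr D)}\to R_{\mathfrak m}$ is injective.

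For condition~(a), $\Delta$-simplicity of $R_{\mathfrak m}$ is formal: the $\Delta$-ideals of $R_{\mathfrak m}$ correspond bijectively to the $\Delta$-ideals of $\mathcal O_{\Gamma(\mathscr D)}$ containing $\mathfrak m$, and by maximality of $\mathfrak m$ these are only $\mathfrak m$ and the whole ring, so $R_{\mathfrak m}$ has no nontrivial $\Delta$-ideals. For condition~(b), let $\Phi$ be the image in $R_{\mathfrak m}$ of the coordinate matrix $(u_{ij})$. The generators $\partial u_{ij}/\partial x_k - a_{ik}u_{kj}$ of $\mathcal E$ already vanish in $\mathcal O_{\Gamma(\mathscr D)}$, hence in $R_{\mathfrak m}$, which gives $\partial\Phi/\partial x_i = A_i\Phi$; moreover $\det(u_{ij})^{-1}$ lies in $\mathcal O_{\Gamma(\mathscr D)}$, so its image inverts $\det(\Phi)$ and $\Phi\in{\rm GL}(R_{\mathfrak m})$. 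Finally condition~(c) is inherited without work, since $\mathcal O_{\Gamma(\mathscr D)}$ is by construction the $\Delta$-ring generated over $\mathbb C(B)$ by the $u_{ij}$ and $\det(u_{ij})^{-1}$, whence $R_{\mathfrak m}=\mathbb C(B)\{\Phi,\Phi^{-1}\}_\Delta$.

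I do not expect a genuine obstacle here; the substance of the statement is merely that the PPV axioms are stable under passage to a maximal $\Delta$-quotient. The one point deserving a line of care is that the quotient must not destroy the data we need---namely that $\mathbb C(B)$ survives and that $\det(\Phi)$ stays a unit---and both are immediate, the former because $\mathbb C(B)$ is a field and the latter because the determinant was already inverted at the level of $\mathcal O_{\Gamma(\mathscr D)}$. Thus $R_{\mathfrak m}$ satisfies (a), (b), (c) and is a PPV ring over $\mathbb C(B)$ for~\eqref{eq:linearS}.
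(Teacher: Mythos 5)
Your proof is correct and takes the same route the paper intends: the paper observes that $\mathcal O_{\Gamma(\mathscr D)}$ already satisfies conditions (b) and (c) by its very construction (note the paper misprints this as ``(a) and (c)''), that only $\Delta$-simplicity can fail, and then states the proposition as clear, the whole content being that maximality of $\mathfrak m$ supplies condition (a) while (b), (c) and the embedding of $\mathbb C(B)$ pass to the quotient. Your write-up simply makes these routine verifications (the lattice correspondence of $\Delta$-ideals, the survival of $\mathbb C(B)$ because it is a field, and the persistence of the inverted determinant) explicit, which is exactly what the paper leaves to the reader.
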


\begin{Definition}[{\cite[Definition~9.4(2)]{cassidy2005galois}}] A PPV field over $\mathbb C(B)$ for the equation~\eqref{eq:linearS} is a~$\Delta$-field~$R$ containing $\mathbb C(B)$ and satisfying:
\begin{itemize}\itemsep=0pt
\item[(a)] there exist a matrix $\Phi\in {\rm GL}(L)$ such that $\frac{\partial \Phi}{\partial x_i} = A_i\Phi$;
\item[(b)] $L$ is spanned, as $\Delta$-ring by the entries of $\Phi$, $L = \mathbb C(B)\big\langle\Phi,\Phi^{-1}\big\rangle_{\Delta}$;
\item[(c)] $L^{\Lambda} = \mathbb C(B)^{\Lambda}$.
\end{itemize}
\end{Definition}

A big problem is that PPV rings does not always produce PPV fields, and we do not have in general an uniqueness theorem, unless the field of $\Lambda$-constants is constrainedly closed (which it is not the case in our scope).

However, let us assume that $\rho\circ \pi\colon P\to S$ admit a section $\tau$. Since $P$ is a direct product $\tau = (\phi,\bar\tau)$, so that $\bar\tau$ is a section of~$\rho$. Then, we can consider the section $\tau$ as an initial value condition for \eqref{eq:linearS}, so that $\tau$ prolongs to a section $j\tau\colon S\to \Gamma(\mathscr D)$ that maps each $s\in S$ to $j_{\pi(\tau(s))}\Phi$ where $\Phi$ is the solution of the initial value problem
\begin{gather*}\frac{\partial \Phi}{\partial x_i} = A_i\Phi, \qquad \Phi(\bar\tau(s)) = \phi(s).\end{gather*}
Now, we consider $T_{\tau} = V(j\tau(S),\mathscr D^{(\infty)})$ the smallest subvariety variety of $\Gamma(\mathscr D)$ that contains the image of $j\tau$ and it is differential (tangent to $\mathscr D^{\infty}$). This $T_\tau$ (our candidate for torsor) is by construction the zero locus of a maximal differential ideal $\mathfrak m_{\tau}$.

\begin{Proposition}The $\Delta$-field extension $\mathbb C(B)\subset \mathbb C(T_{\tau})$ is a PPV field over $\mathbb C(B)$ for equa\-tion~\eqref{eq:linearS}.
\end{Proposition}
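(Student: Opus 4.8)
The plan is to verify the three defining conditions of a PPV field for $L=\mathbb{C}(T_\tau)$, but I would begin by confirming that $L$ is genuinely a $\Delta$-field, i.e.\ that $\mathfrak{m}_\tau$ is stable under the \emph{full} total derivative algebra $\mathfrak{X}(B)^{\rm tot}$ and not merely under $\Lambda^{\rm tot}=\mathscr{D}^{(\infty)}$. Write $\Phi(x,s):=\Phi_s(x)$ for the $\mathscr{D}$-horizontal section determined by the initial datum $\tau(s)$; it is defined on a dense open set of $B$, and $T_\tau$ is the Zariski closure of its graph $G=\{\,j_{(x,s)}^{\infty}\Phi\,\}$. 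Since the flow of $\partial_{s_k}^{\rm tot}$ carries $j_{(x,s)}^{\infty}\Phi$ to $j_{(x,s+te_k)}^{\infty}\Phi$, and $\Phi$ is defined at the shifted point, $G$ — and hence $T_\tau=\overline{G}$ — is invariant under every $\partial_{s_k}^{\rm tot}\in\Pi^{\rm tot}$ as well as under $\Lambda^{\rm tot}$. Thus $\mathfrak{m}_\tau$ is a $\Delta$-differential ideal and $L$ inherits the $\Delta$-structure of $\mathbb{C}(\Gamma(\mathscr{D}))$.

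Conditions (a) and (b) are then formal. Setting $\Phi=(u_{ij})|_{T_\tau}$, the equations $\partial_{x_i}^{\rm tot}u_{jk}=(A_i)_{jl}u_{lk}$ cutting out $\Gamma(\mathscr{D})$ descend to $T_\tau$, giving $\partial\Phi/\partial x_i=A_i\Phi$; since $\det(u_{ij})^{-1}$ already lies in $\mathcal{O}_{\Gamma(\mathscr{D})}$, the matrix $\Phi$ is invertible over $L$, which is (a). For (b), the presentation $\mathcal{O}_{\Gamma(\mathscr{D})}=\mathbb{C}(B)\{u_{ij},\det^{-1}\}_\Delta$ passes to the quotient by $\mathfrak{m}_\tau$ and to fractions, so $L=\mathbb{C}(B)\langle\Phi,\Phi^{-1}\rangle_\Delta$.

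The heart of the matter is (c), namely $L^{\Lambda}=\mathbb{C}(B)^{\Lambda}=\mathbb{C}(S)$; the inclusion $\supseteq$ is immediate. For the reverse, I would use that $\mathfrak{m}_\tau$ is a \emph{maximal} differential ideal, so $\mathcal{O}_{T_\tau}$ is $\Lambda^{\rm tot}$-simple; equivalently, the generic fibre of the projection $p\colon T_\tau\to S$ admits no proper nonempty $\Lambda^{\rm tot}$-invariant closed subset. Concretely this fibre is $\overline{\mathrm{leaf}_s}$, the Zariski closure of the single $\Lambda^{\rm tot}$-orbit $\{\,j_{(x,s)}^{\infty}\Phi : x\in B_s\,\}$; although that orbit is only $\dim B_s$-dimensional as an analytic set, its Zariski closure fills the (strictly higher-dimensional) fibre precisely because $\Phi$ is transcendental. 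Any $f\in L^{\Lambda}$ is a rational first integral of $\mathscr{D}^{(\infty)}|_{T_\tau}$, hence is annihilated by each $\partial_{x_i}^{\rm tot}$ and so is constant along $\mathrm{leaf}_s$ and therefore along its closure; thus $f$ is constant on the generic fibre of $p$ and descends to a rational function on $S$.

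The main obstacle is the final upgrade from ``$f$ is constant on generic fibres'' — which a priori only places $f$ in the algebraic closure of $\mathbb{C}(S)$ inside $L$ — to ``$f\in\mathbb{C}(S)$''. Here I would invoke the standing hypothesis that $\mathbb{C}(S)$ is relatively algebraically closed in $\mathbb{C}(B)$: this makes the generic fibre $B_s$ of $\rho$ geometrically irreducible, and since $\mathrm{leaf}_s$ is the image of $B_s$ under $x\mapsto j_{(x,s)}^{\infty}\Phi$, the generic fibre $\overline{\mathrm{leaf}_s}$ of $p$ is geometrically irreducible as well. A rational function constant on a geometrically irreducible generic fibre necessarily lies in $\mathbb{C}(S)$, yielding $L^{\Lambda}\subseteq\mathbb{C}(S)$ and completing the verification of (c), and with it the proof.
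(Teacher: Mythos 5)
Your preliminary step (stability of $\mathfrak m_\tau$ under all of $\Delta$, so that $L$ is a $\Delta$-field) is correct and worth making explicit, and your verification of (a) and (b) is fine; the closing descent via relative algebraic closedness is also fine but is the easy part. The gap is at the heart of (c), and it is twofold. First, the inference ``$\mathfrak m_\tau$ is a maximal differential ideal, so $\mathcal O_{T_\tau}$ is $\Lambda^{\rm tot}$-simple'' conflates $\Delta$-simplicity with $\Lambda$-simplicity. Every $\Delta$-ideal is a $\Lambda$-ideal but not conversely, so maximality among $\Delta$-ideals says nothing about $\Lambda$-ideals: for instance $\mathbb C(x,s)[y]$ with $\partial_x y=0$, $\partial_s y=1$ is $\Delta$-simple, yet $(y)$ is a proper nonzero $\Lambda$-ideal and $y$ is a new $\Lambda$-constant. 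This is exactly the pitfall the paper flags when it says that PPV rings (i.e.\ $\Delta$-simple quotients) do not in general produce PPV fields. Consequently the paper's proof spends its main effort proving that $\mathcal O_{T_\tau}$ is $\Lambda$-simple, and this uses the homogeneous-space structure of $\Gamma(\mathscr D)$ in an essential way: if a proper nonzero $\Lambda$-ideal has zero locus $T'\subsetneq T_\tau$, then $T'$ dominates $B$, so one may choose an algebraic section $\tau'$ of $T'$ over $S$ lying over the same points of $B$ as $j\tau$, write $\tau'=j\tau\cdot g$ with $g\colon S\dasharrow \hat\tau_m G$ algebraic, and observe that $T'\cdot g^{-1}$ is still tangent to $\mathscr D^{(\infty)}$ (gauge translations constant along the fibers of $\rho$ preserve the prolonged connection), contains $j\tau(S)$, and hence equals $T_\tau$ by minimality --- a contradiction. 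Your proposal contains no substitute for this argument.

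Second, your geometric route --- ``the generic fibre of $p\colon T_\tau\to S$ is $\overline{\mathrm{leaf}_s}$, whose closure fills the fibre precisely because $\Phi$ is transcendental'' --- is circular. $T_\tau$ is the Zariski closure of an \emph{analytic}, non-constructible family, and for such families the fibre of the closure can strictly contain the closure of the fibre: the Zariski closure of a sufficiently transcendental analytic section $s\mapsto (s,p_s)$ of $S\times\mathbb C^2\to S$ can be all of $S\times \mathbb C^2$, while each member of the family is a single point. The assertion that the generic fibre of $p$ equals a single leaf closure is essentially equivalent to the statement $L^{\Lambda}=\mathbb C(S)$ being proved (it says $\mathscr D^{(\infty)}|_{T_\tau}$ has no rational first integrals beyond $\mathbb C(S)$), so ``transcendence of $\Phi$'' cannot be invoked for it; and the geometric irreducibility of the generic fibre, which you derive from the same identification, inherits the circularity. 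Once $\Lambda$-simplicity is actually in hand, the paper concludes without any leaf-density claim: a $\Lambda$-constant $F$ of the fraction field lies in $\mathcal O_{T_\tau}$ because the denominator ideal $\{b\colon bF\in\mathcal O_{T_\tau}\}$ is a nonzero $\Lambda$-ideal, hence the unit ideal; then $F-j\tau^*(F)$ is a $\Lambda$-constant vanishing along $j\tau(S)$, and the retraction $j\tau^*$ together with the minimality of $T_\tau$ forces $F=j\tau^*(F)\in\mathbb C(S)$.
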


\begin{proof}
A first observation is that the PPV ring $\mathcal O_{T_{\tau}}$ is not only $\Delta$-simple but also $\Lambda$-simple.

Let us note that for each $k$, the truncation $T_{\tau,k}$ is the smallest subvariety tangent to $\mathscr D^{(k)}$ that contains the image of $j_k\tau$.

If $\mathfrak m$ is a non-trivial maximal $\Lambda$-ideal of $\mathcal O_{T_{\tau}}$ then its zero locus is a subvariety $T'\subset T_{\tau}$ strictly contained in $T_{\tau}$ that dominates $B$ by projection. We may take an algebraic section $\tau'$ from $S$ to $T'$ such that $\pi_{0}\circ\tau' = \pi\circ \tau$, where $\pi_0$ is the projection from $J(P/B)$ to $B$. Then, we have an algebraic map $g\colon S\dasharrow \hat\tau_m G$ such that $j\tau\cdot g = \tau'$. Then, $T'\cdot g^{-1}$ contains the image of $j\tau$ and it is strictly contained in $T_\tau$.

Therefore, for any $k$ we have that $T'_k$ is a subvariety of $T_{\tau,k}$ containing the image of $j_k\tau$. Since the ideal of $T'_k$ is a $\Lambda$-ideal, we have that $\big(T'\cdot g^{-1}\big)_k$ is tangent to $\mathscr D^{(k)}$. Therefore, by the minimality of $T_{\tau,k}$ we have that $T'_k \big(T'\cdot g^{-1}\big) = T_{\tau,k}$ for all~$k$, so that $\big(T'\cdot g^{-1}\big) = T_{\tau}$, and then $T' = T_{\tau}$.

Let $F\in\mathbb C(T_\tau)$ be a $\Lambda$-constant. It defines a non-zero $\Lambda$-ideal of $\mathcal O_{T_\tau}$,
\begin{gather*}\{b\in \mathcal O_{T_\tau} \colon bF \in \mathcal O_{T_\tau} \}.\end{gather*}
This ideal must be $(1)$ and then we have $F\in \mathcal O_{T_\tau}$. We have that $j\tau^*\colon \mathcal O_{T_\tau}\to \mathbb C(S)$ is a retract of the embedding of $\mathbb C(S)$ into $\mathcal O_{T_\tau}$, and it is a $\Pi$-ring morphism. Then, $F - j\tau^*(F)$ is a $\Lambda$-constant, and then it is also a $\Pi$-constant, so that it is a~$\Delta$-constant, and therefore a complex number. Moreover $F$ and $j\tau^*(F)$ coincide along the image of $j(\tau)$ so we have that $F = j\tau^*(F)$, a rational function in~$S$.
\end{proof}

\begin{Remark}Note that $\pi$ admit always an algebraic section, therefore, in our scope, there is always a Picard--Vessiot extension after an algebraic extension of $\mathbb C(S)$. The existence of such extensions was proved in a much more general situation in~\cite{wibmer2012}.
\end{Remark}

In $\Gamma(\mathscr D)$ we have the following situation. The group bundle ${\rm Gau}(\Gamma(\mathscr D))$ of gauge symmetries of $\mathscr D$, and the group $J({\rm GL}_n(\mathbb C)/S)$ of right translations depending on parameters acts on the right side. These actions commute,
\begin{gather*}{\rm Gau}(\Gamma(\mathscr D))\times_B \Gamma(\mathscr D) \times_S J({\rm GL}_n(\mathbb C)/S)\to \Gamma(\mathscr D).\end{gather*}

In order to understand the relation between the Galois group bundle ${\rm Gal}_\infty(\mathscr D)$ and the parameterized Picard--Vessiot group of~\cite{cassidy2005galois} let us first examine what happens in case without parameters, where $S$ reduces to a point. In such case $\tau$ is just a point $p\in P$. By G\'omez-Mont theorem (Theorem~\ref{Gomez-Mont_as_ROE}) we may consider a complete integral $f\colon P\dasharrow V$ of $\mathscr D$ with $V$ a model for $\mathbb C(P)^\mathscr D$ and we have that $T = \overline{f^{-1}f(p)}$.

Let us consider ${\rm Stb}(T)$ the algebraic subgroup of $G$ that stabilizes $T$ by right translations. It acts in $\mathbb C(T)$ by $\mathbb C(B)$-automorphisms on the left side, and it is the differential Galois group ${\rm Aut}_\Lambda(\mathbb C(T)/\mathbb C(B))$ of classical Picard--Vessiot theory (without parameters).

The action of the Galois bundle ${\rm Gal}(\mathscr D)$ respects the values of rational first integrals of $\mathscr D$. Therefore we have ${\rm Gal}(\mathscr D)\times_B T \times {\rm Stb}(T)\to T$ a restricted action. The fibers of~$T$ are principal homogeneous spaces, on the right side for~${\rm Stb}(T)$ and on the left side for the fibers of ${\rm Gal}(\mathscr D)$. If we fix the point $x = \pi(p)$ we have a~group isomorphism
\begin{gather*}\psi\colon \ {\rm Gal}(\mathscr D)|_{x} \xrightarrow{\sim} {\rm Stb}(T), \qquad g_x\mapsto \psi(g_x) \qquad \mbox{with} \quad g_xp = p\psi(g_x).\end{gather*}

The case with parameters $S$ and a section $\tau$ is not so different. As $T_{\tau}$ is the zero locus of a~$\Delta$-ideal, we have that the stabilized of $T_{\tau}$ is a~differential algebraic subgroup of $J({\rm GL}_n(\mathbb C/S)$.

Also, by iterated application of G\'omez-Mont theorem we may construct a complete integral $f_{\infty}\colon \Gamma(\mathscr D)\dasharrow V_{\infty}$ of $\mathscr D^{(\infty)}$ where $V_\infty$ is a model of $\mathbb C(\Gamma(\mathscr D))^{\mathscr D^{(\infty)}}$. We have a commutative diagram
\begin{gather*}\xymatrix{\Gamma(\mathscr D) \ar@{-->}[r]^-{f_\infty} \ar[d] &
V_\infty \ar@{-->}[dl]\\ S, \ar@/^/[u]^-{j\tau} }\end{gather*}
and $T_{\tau} = \overline{f_\infty^{-1}(f_{\infty}(j\tau(S)))}$. Be definition, the action of ${\rm Gal}_\infty(\mathscr D)$ preserves $f_\infty$ and therefore it stabilizes $T_{\tau}$. We have again left and right actions
\begin{gather*}{\rm Gal}_{\infty}(\mathscr D)\times_B T_{\tau} \times_S {\rm Stb}(T_\tau) \to T_{\tau},\end{gather*}
such that any fiber of $T_\tau$ is a principal homogeneous space for the corresponding fibers of ${\rm Gal}_{\infty}(\mathscr D)$ and ${\rm Stb}(T_\tau)$.

The relation between the differential algebraic group ${\rm Stb}(T_\tau)\to S$ and the automorphisms of $\mathbb C(T_\tau)$ is as follows. A \emph{rational section} of ${\rm Stb}(T_\tau)$ is a rational section $g$ of ${\rm Stb}_0(T_{\tau})$ (the $0$-th order truncations) whose jet prolongation $jg$ is a section of ${\rm Stb}(T_\tau)$.
Rational sections of ${\rm Stb}(T_\tau)$ act in $\mathbb C(T_\tau)$ by $\mathbb C(B)$-automorphisms. Indeed, if we consider an universal $\Delta$-field~$\mathcal U$ containing~$\mathbb C(T_\tau)$ then sections of~${\rm Stb}(T)$ with coefficients in some $\Pi$-field $\mathcal K\subset \mathcal U^\Lambda$ yield $\mathbb C(B)$-isomorphisms of~$\mathbb C(T_\tau)$ into $\mathbb C(B)\cdot \mathcal K$.

As before, we can restrict ${\rm Gal}_{\infty}(\mathscr D)$, not to a point of $B$ but along the section $\bar\tau$ of $\rho$, so that $\bar\tau^*({\rm Gal}_\infty(\mathscr D) )\to S$ is a differential algebraic group over $S$. We have an isomomorphism of differential algebraic groups
\begin{gather*}\psi\colon \ \bar\tau^*({\rm Gal}_\infty(\mathscr D))\xrightarrow{\sim} {\rm Stb}(T_\tau), \qquad
\sigma_{\bar\tau(s)} \mapsto g_s \qquad \mbox{with} \quad \sigma_{\bar \tau(s)}j\tau(s) = j\tau(s)g_s.\end{gather*}

\begin{Remark}Note that $\bar\tau^*$ kills the derivatives in $\Lambda$, therefore if we are interested in presenting the equations of the computed Galois groups in the setting of \cite{cassidy2005galois}, as differential algebraic groups over~$S$, we only need to take~$x$ constant and omit the equations of $\Gamma({\rm Adj}(\mathscr D))$ (which are transversal to $j\tau$) involving the derivative with respect to~$x$.
\end{Remark}

\subsection*{Acknowledgements}

We would thank the Universit\'e de Rennes 1 and the Universidad Nacional de Colombia for their hospitality and support. D.~Bl\'azquez-Sanz is partially funded by Colciencias project ``Estructuras lineales en geometr\'ia y topolog\'ia'' 776-2017 code 57708 (Hermes UN 38300). G.~Casale is partially funded by Math-AMSUD project ``Complex Geometry and Foliations''. J.S.~D\'iaz Arboleda is partially funded by Colciencias program 647 ``Doctorados Nacionales''. D.~Bl\'azquez-Sanz specially thanks the support, care, and patience of D.~Higuita during the writing process.

\pdfbookmark[1]{References}{ref}
\LastPageEnding


\begin{thebibliography}{99}
\footnotesize\itemsep=0pt

\bibitem{arreche2016}
Arreche C.E., On the computation of the parameterized differential {G}alois
 group for a second-order linear differential equation with differential
 parameters, \href{https://doi.org/10.1016/j.jsc.2015.11.006}{\textit{J.~Symbolic Comput.}} \textbf{75} (2016), 25--55.

\bibitem{parallelisms2017}
Bl\'azquez-Sanz D., Casale G., Parallelisms \& {L}ie connections,
 \href{https://doi.org/10.3842/SIGMA.2017.086}{\textit{SIGMA}} \textbf{13} (2017), 086, 28~pages, \href{https://arxiv.org/abs/1603.07915}{arXiv:1603.07915}.

\bibitem{cartier2008groupoides}
Cartier P., Groupo\"{\i}des de {L}ie et leurs alg\'ebro\"{\i}des,
 \textit{Ast\'erisque} \textbf{326} (2009), Exp. No.~987, 165--196.

\bibitem{cassidy1989classification}
Cassidy P.J., The classification of the semisimple differential algebraic
 groups and the linear semisimple differential algebraic {L}ie algebras,
 \href{https://doi.org/10.1016/0021-8693(89)90092-6}{\textit{J.~Algebra}} \textbf{121} (1989), 169--238.

\bibitem{cassidy2005galois}
Cassidy P.J., Singer M.F., Galois theory of parameterized differential
 equations and linear differential algebraic groups, in Differential Equations
 and Quantum Groups, \textit{IRMA Lect. Math. Theor. Phys.}, Vol.~9, Eur.
 Math. Soc., Z\"urich, 2007, 113--155, \href{https://arxiv.org/abs/math.CA/0502396}{arXiv:math.CA/0502396}.

\bibitem{Damien2016specialisation}
Davy D., Sp\'ecialisation du pseudo-groupe de {M}algrange et
 irr\'eductibilit\'e, Ph.D.~Thesis, Universit\'e Rennes~1, 2016.

\bibitem{demazure1970schemas}
Demazure M., Grothendieck A., Sch\'emas en groupes. {III}: {S}tructure des
 sch\'emas en groupes r\'eductifs, \textit{Lecture Notes in Math.}, Vol.~153,
 \href{https://doi.org/10.1007/BFb0059027}{Springer-Verlag}, Berlin~-- New York, 1970.

\bibitem{dreyfus2014}
Dreyfus T., A density theorem in parametrized differential {G}alois theory,
 \href{https://doi.org/10.2140/pjm.2014.271.87}{\textit{Pacific~J. Math.}} \textbf{271} (2014), 87--141, \href{https://arxiv.org/abs/1203.2904}{arXiv:1203.2904}.

\bibitem{gillet2013}
Gillet H., Gorchinskiy S., Ovchinnikov A., Parameterized {P}icard--{V}essiot
 extensions and {A}tiyah extensions, \href{https://doi.org/10.1016/j.aim.2013.02.006}{\textit{Adv. Math.}} \textbf{238} (2013),
 322--411, \href{https://arxiv.org/abs/1110.3526}{arXiv:1110.3526}.

\bibitem{gomez1989integrals}
G\'{o}mez-Mont X., Integrals for holomorphic foliations with singularities
 having all leaves compact, \href{https://doi.org/10.5802/aif.1173}{\textit{Ann. Inst. Fourier (Grenoble)}} \textbf{39}
 (1989), 451--458.

\bibitem{gorchinskiy2014}
Gorchinskiy S., Ovchinnikov A., Isomonodromic differential equations and
 differential categories, \href{https://doi.org/10.1016/j.matpur.2013.11.001}{\textit{J.~Math. Pures Appl.}} \textbf{102} (2014),
 48--78, \href{https://arxiv.org/abs/1202.0927}{arXiv:1202.0927}.

\bibitem{grauert1986meromorphic}
Grauert H., On meromorphic equivalence relations, in Contributions to Several
 Complex Variables, \textit{Aspects Math.}, Vol.~E9, \href{https://doi.org/10.1007/978-3-663-06816-7_6}{Friedr. Vieweg},
 Braunschweig, 1986, 115--147.

\bibitem{hardouin2017}
Hardouin C., Minchenko A., Ovchinnikov A., Calculating differential {G}alois
 groups of parametrized differential equations, with applications to
 hypertranscendence, \href{https://doi.org/10.1007/s00208-016-1442-x}{\textit{Math. Ann.}} \textbf{368} (2017), 587--632,
 \href{https://arxiv.org/abs/1505.07068}{arXiv:1505.07068}.

\bibitem{iwasaki2013gauss}
Iwasaki K., Kimura H., Shimomura S., Yoshida M., From {G}auss to {P}ainlev\'e:
 a~modern theory of special functions, \textit{Aspects of Mathematics}, Vol.~E16, \href{https://doi.org/10.1007/978-3-322-90163-7}{Friedr. Vieweg \& Sohn}, Braunschweig, 1991.

\bibitem{kiso1979local}
Kiso K., Local properties of intransitive infinite {L}ie algebra sheaves,
 \href{https://doi.org/10.4099/math1924.5.101}{\textit{Japan.~J. Math. (N.S.)}} \textbf{5} (1979), 101--155.

\bibitem{kolchin1968}
Kolchin E.R., Algebraic groups and algebraic dependence, \href{https://doi.org/10.2307/2373294}{\textit{Amer.~J.
 Math.}} \textbf{90} (1968), 1151--1164.

\bibitem{kolchin1973}
Kolchin E.R., Differential algebra and algebraic groups, \textit{Pure and Applied Mathematics}, Vol.~54, Academic
 Press, New York~-- London, 1973.

\bibitem{kolchin1985}
Kolchin E.R., Differential algebraic groups, \textit{Pure and Applied
 Mathematics}, Vol.~114, Academic Press, Inc., Orlando, FL, 1985.

\bibitem{landesman2008}
Landesman P., Generalized differential {G}alois theory, \href{https://doi.org/10.1090/S0002-9947-08-04586-8}{\textit{Trans. Amer.
 Math. Soc.}} \textbf{360} (2008), 4441--4495, \href{https://arxiv.org/abs/0707.3583}{arXiv:0707.3583}.

\bibitem{leon2016}
Le\'on~S\'anchez O., Nagloo J., On parameterized differential {G}alois
 extensions, \href{https://doi.org/10.1016/j.jpaa.2015.12.001}{\textit{J.~Pure Appl. Algebra}} \textbf{220} (2016), 2549--2563,
 \href{https://arxiv.org/abs/1507.06338}{arXiv:1507.06338}.

\bibitem{malgrange2001}
Malgrange B., Le groupo\"{\i}de de {G}alois d'un feuilletage, in Essays on
 Geometry and Related Topics, {V}ols.~1,~2, \textit{Monogr. Enseign. Math.},
 Vol.~38, Enseignement Math., Geneva, 2001, 465--501.

\bibitem{le2010algebraic}
Malgrange B., Differential algebraic groups, in Algebraic Approach to
 Differential Equations, \href{https://doi.org/10.1142/9789814273244_0007}{World Sci. Publ.}, Hackensack, NJ, 2010, 292--312.

\bibitem{malgrange2010pseudogroupes}
Malgrange B., Pseudogroupes de {L}ie et th\'eorie de {G}alois diff\'erentielle,
 {P}reprint IHES/M/10/11, Institut des Hautes \'etudes Scientifiques, 2010,
 available at \url{http://preprints.ihes.fr/2010/M/M-10-11.pdf}.

\bibitem{minchenko2014}
Minchenko A., Ovchinnikov A., Singer M.F., Unipotent differential algebraic
 groups as parameterized differential {G}alois groups, \href{https://doi.org/10.1017/S1474748013000200}{\textit{J.~Inst. Math.
 Jussieu}} \textbf{13} (2014), 671--700, \href{https://arxiv.org/abs/1301.0092}{arXiv:1301.0092}.

\bibitem{minchenko2015}
Minchenko A., Ovchinnikov A., Singer M.F., Reductive linear differential
 algebraic groups and the {G}alois groups of parameterized linear differential
 equations, \href{https://doi.org/10.1093/imrn/rnt344}{\textit{Int. Math. Res. Not.}} \textbf{2015} (2015), 1733--1793,
 \href{https://arxiv.org/abs/1304.2693}{arXiv:1304.2693}.

\bibitem{pillay2004}
Pillay A., Algebraic {$D$}-groups and differential {G}alois theory,
 \href{https://doi.org/10.2140/pjm.2004.216.343}{\textit{Pacific~J. Math.}} \textbf{216} (2004), 343--360.

\bibitem{wibmer2012}
Wibmer M., Existence of {$\partial$}-parameterized {P}icard--{V}essiot
 extensions over fields with algebraically closed constants,
 \href{https://doi.org/10.1016/j.jalgebra.2012.03.035}{\textit{J.~Algebra}} \textbf{361} (2012), 163--171, \href{https://arxiv.org/abs/1104.3514}{arXiv:1104.3514}.

\end{thebibliography}
\end{document}